\documentclass[3p]{elsarticle}
\usepackage{amsmath,amsthm,amscd,amssymb,latexsym,upref,stmaryrd}

\usepackage{verbatim}


\newcommand{\bbC}{{\mathbb{C}}}

\newcommand{\bbR}{{\mathbb{R}}}

\newcommand{\cA}{{\mathcal A}}
\newcommand{\cB}{{\mathcal B}}

\newcommand{\cD}{{\mathcal D}}

\newcommand{\cH}{{\mathcal H}}

\newcommand{\cK}{{\mathcal K}}

\newcommand{\cP}{{\mathcal P}}
\newcommand{\cQ}{{\mathcal Q}}

\newcommand{\cT}{{\mathcal T}}

\newcommand{\sF}{{\mathsf F}}
\newcommand{\sH}{{\mathsf H}}
\newcommand{\sY}{{\mathsf Y}}




\DeclareMathOperator{\Tr}{Tr}
\DeclareMathOperator{\tr}{tr}

\renewcommand{\Re}{\text{\rm Re}}

\newcommand{\beq}{\begin{equation}}
\newcommand{\enq}{\end{equation}}

\renewcommand{\ge}{\geqslant}
\renewcommand{\le}{\leqslant}

\let\geq\geqslant
\let\leq\leqslant

\newcommand\restr[2]{{
  \left.\kern-\nulldelimiterspace 
  #1 
  \vphantom{\big|} 
  \right|_{#2} 
  }}



\numberwithin{equation}{section}

\renewcommand{\P}{{\mathcal P}}


\allowdisplaybreaks \numberwithin{equation}{section}

\newtheorem{theorem}{Theorem}[section]

\newtheorem{proposition}[theorem]{Proposition}
\newtheorem{lemma}[theorem]{Lemma}
\newtheorem{corollary}[theorem]{Corollary}

\newtheorem{definition}[theorem]{Definition}

\theoremstyle{remark}
\newtheorem{remark}[theorem]{Remark}

\newtheorem{example}[theorem]{Example}


\begin{document}

\numberwithin{equation}{section}
\allowdisplaybreaks

\title{A regularity condition under which integral operators with operator-valued kernels are trace class}

\tnotetext[t1]{This work was funded by the National Science Foundation under DMS-2106203 and DMS-2106157.}

  
 \author[1]{John Zweck\corref{cor1}%
 }
\ead{zweck@utdallas.edu}
  
 \author[2]{Yuri Latushkin%
 }
\ead{latushkiny@missouri.edu}

\author[1]{Erika Gallo%
 }
\ead{Erika.Gallo@utdallas.edu}

\cortext[cor1]{Corresponding author}

\affiliation[1]{organization={Department of Mathematical Sciences, The University of Texas at Dallas},
city={Richardson, TX},
postcode={75080},
country={USA}}

\affiliation[2]{organization={Department of Mathematics, University of Missouri},
city={Columbia, MO},
postcode={65211},
country={USA}}



\begin{keyword}
Integral operators \sep
Trace class operators \sep 
Operator-valued kernels \sep 
Mercer's Theorem 
\end{keyword}

\date{\today}

\hspace*{-3mm} 
\begin{abstract} 
We study integral operators
on the space of square-integrable functions from a compact set, $X$, to
a separable Hilbert space, $\sH$.  The kernel of such an operator 
takes values in the ideal of Hilbert-Schmidt operators on $\sH$. 
We establish regularity conditions on the kernel under which the 
associated integral operator is trace class. 
First, we  extend Mercer's theorem to operator-valued kernels
by proving that a continuous, nonnegative-definite, Hermitian symmetric
kernel   defines a trace class integral operator on $L^2(X;\sH)$
under an additional assumption.
Second, we show that a general operator-valued kernel 
that is defined on a compact set and that is 
H\"older continuous with H\"older exponent greater than a half 
is trace class provided that 
the  operator-valued kernel  is essentially bounded as a mapping into the space
of trace class operators on $\sH$. 
Finally, when $\dim \sH < \infty$, we show that  an analogous result also holds
for matrix-valued kernels on the real line, provided that an additional 
exponential decay assumption holds.

\end{abstract}

\maketitle

\parskip=0em
\normalsize

\section{Introduction}

In this paper we revisit a 120 year old problem from the birth of functional 
analysis on the trace and determinant of an integral operator. The novelty of our contribution is that the operators we are concerned with are defined in terms
of  operator-valued  kernels rather than the scalar-valued
kernels that are most commonly treated in the classical theory. 
By an operator-valued kernel, $K$,  we 
mean an $L^2$-kernel  that takes values in the ideal, 
$\cB_2(\sH)$, of Hilbert-Schmidt operators on a separable Hilbert space, $\sH$,
that is $K\in L^2(X\times X;\cB_2(\sH))$ where   $X\subset 
\mathbb R^m$ is compact.
In the special case that $\sH$ is one dimensional the kernel is  scalar-valued
kernel and when $\sH$ is finite dimensional it is matrix-valued.
To each operator-valued kernel, $K\in L^2(X\times X;\cB_2(\sH))$, 
there is an associated integral operator,
\begin{equation}
(\cK \psi)(x) \,\,=\,\,
\int_X K(x,y)  \psi(y)\, dy, \qquad\text{for } \psi\in L^2(X;\sH),
\end{equation}
which is a Hilbert-Schmidt operator on $L^2(X;\sH)$, that is 
$\cK\in \cB_2(L^2(X;\sH))$.
The main goal of this paper is to establish a regularity condition on  an operator-valued kernel, $K$,  which ensures
that the integral operator, $\cK$, is trace class.

Trace class operators are compact operators on a Hilbert  
space for which a notion of trace can be defined.\footnote{For a detailed history of the subject and a comprehensive literature review,
we refer the reader to the books of Barry Simon~\cite{Simon} and Gohberg, Goldberg and Krupnik~\cite{GGK}, as well as to 
the recent influential paper of Bornemann~\cite{Bornemann} on the numerical evaluation
of Fredholm determinants.}  
The theory of trace class operators was developed by Schatten and von Neumann
in the 1940's~\cite{schatten1946crossII}. They defined an operator $\mathcal K$ on $\sH$ to be trace class if it is the composition of two Hilbert-Schmidt operators. 
Because $\cB_2(\sH)$ is an ideal, every trace class operator is Hilbert-Schmidt.
Following Simon~\cite{Simon},  we say that
a compact operator, $\cK$, belongs to the $p$-th 
Schatten class, $\cB_p(\sH)$, if 
\begin{equation}
\| \cK \|_{\cB_p(\sH)} \,\,:=\,\, \sum\limits_{\ell=1}^\infty \mu_\ell^p \,\,<\,\,\infty.
\end{equation}
Here, $\{\mu_\ell\}_{\ell=1}^\infty$ is the sequence of  singular values of $\cK$, which are the eigenvalues of the Hermitian-symmetric nonnegative definite operator,  $\cP = (\cK \cK^*)^{1/2}$.
The cases $p=1$ and $p=2$ are the spaces of trace class and Hilbert-Schmidt operators, respectively.
The trace of a trace class operator  is then defined to be 
the sum of its singular values.
Put more succinctly, in Simon's formulation
the concept of a trace class operator inherently
involves the symmetrized operator, $\cP$, associated with $\cK$.

A major reason for the interest in trace class operators is that
$\cB_1(\sH)$ is the space of operators for which the
regular Fredholm determinant is defined. 
Following Simon~\cite{Simon} and Grothendieck~\cite{grothendieck1956theorie},
if $\mathcal K$ is trace class on $\sH$, then
the $n$-th wedge product, ${\Lambda}^n \mathcal K$, is
trace class on $\Lambda^n \sH$ and the infinite series
\begin{equation}
\operatorname{det}_1(\mathcal I + z\mathcal K) \,\, := \,\,
\sum\limits_{n=0}^\infty z^n  \operatorname{Tr} ({\Lambda}^n \mathcal K)
\end{equation}
defines an entire function of $z\in\mathbb C$. As in the finite rank case (see \cite{Simon} for a proof),
\begin{equation}
\operatorname{det}_1(\mathcal I + z\mathcal K) \,\, = \,\,
\prod\limits_{\ell=1}^\infty (1 + z\lambda_\ell),
\end{equation}
where $\{ \lambda_\ell\}_{\ell=1}^\infty$ are the eigenvalues of $\cK$.

If  $\mathcal K$ is Hilbert-Schmidt but not trace class, 
it is still possible to define a Fredholm determinant.
To do so, we first observe that the operator $R_2(\mathcal K) := 
(1 + \mathcal K)e^{-\mathcal K} - 1$ is trace class~\cite{Simon}, since
it is of the form $R_2(\mathcal K) = {\mathcal K}^2 h(\mathcal K)$ for some
entire function, $h$, and since the square of a Hilbert-Schmidt operator
is trace class. The $2$-modified Fredholm determinant is then defined by
\begin{equation}
\operatorname{det}_2(\mathcal I + z\mathcal K) \,\, := \,\,
\operatorname{det}_1(1+ R_2(z\mathcal K) ) 
\,\, = \,\,
\operatorname{det}_1\left( (1+z\mathcal K) e^{-z\mathcal K}\right),
\end{equation}
which is once again an entire function of $z$. 
In this case, the infinite product
\begin{equation}
\operatorname{det}_2(\mathcal I + z\mathcal K) \,\, = \,\,
\prod\limits_{\ell=1}^\infty \left[ (1 + z\lambda_\ell) e^{-z\lambda_\ell}\right]
\label{eq:TCdet2evalue}
\end{equation}
converges.
Furthermore, if $\mathcal K$ is trace class, then both Fredholm determinants are defined,
\begin{equation}
\operatorname{det}_2(\mathcal I + z\mathcal K) 
\,\,=\,\,
\operatorname{det}_1(\mathcal I + z\mathcal K) \, 
e^{-\operatorname{Tr}(\mathcal K)},
\end{equation}
and the zeros of $\operatorname{det}_2(\mathcal I + z\mathcal K)$ and 
$\operatorname{det}_1(\mathcal I + z\mathcal K)$ coincide.

Fredholm determinants
were introduced by Fredholm~\cite{Fred1903} to characterize the 
solvability of integral equations of the second kind, 
$(\mathcal I + z \mathcal K) \boldsymbol \phi = \boldsymbol \psi$.
For applications of Fredholm determinants in mathematical physics, see
Simon~\cite{Simon} and Bornemann~\cite{Bornemann}. 
Our interest in the Fredholm determinants of matrix and operator-valued kernels stems from applications to the
stability of stationary and breather
solutions of nonlinear wave equations such as the 
complex Ginzburg-Landau equation~\cite{PhysicaD116p95,Kap,shen2016spectra,EssSpec}. 
Specifically, the set of eigenvalues of
the linearization of the complex Ginzburg-Landau equation
about  a stationary (soliton) solution
is given by the set of zeros of a Fredholm determinant of a 
Birman-Schwinger operator that is defined in terms of  a matrix-valued
semi-separable Green's  kernel on the real line~\cite{EJF}. 
In a forthcoming paper, we will use Theorem~\ref{TraceClassThmRealLine} below to show that 
this operator is trace class, rather than
simply being Hilbert-Schmidt. Consequently, 
the theory and numerical computation 
of these Fredholm determinants is considerably simplified.
Similarly, we anticipate that operator-valued kernels (with $\dim \sH = \infty$) will
arise for the breather solutions.

In closely related work, Carey et al.~\cite{carey2014jost} showed that certain
 operator-valued separable kernels generate trace class operators
on $L^2((a,b),\sH)$. In a similar vein,  given a  
Schr\"odinger operator with a potential
$V\in L^1(\mathbb R, \cB_1(\sH))$,
they showed that the associated  Birman-Schwinger 
integral operator is a trace class operator on 
$L^2(\mathbb R,\sH)$. They obtained these results by exploiting
the structure of the kernel
to express the integral operator as the composition of Hilbert-Schmidt operators.

Since trace class operators are Hilbert-Schmidt, every trace class operator, $\cK$, on $L^2(X,\sH)$ has a kernel, $K\in L^2(X\times X, \cB_2(\sH))$.
The question of what regularity 
conditions should be imposed on the kernel, $K$, to ensure
that  the operator, $\cK$, is  trace class goes back to  
Fredholm~\cite{Fred1903}, 
and is still of interest today, partly  due to applications 
to reproducing kernel Hilbert spaces which play an important
role in some machine learning algorithms~\cite{aronszajn1950theory,Smale}.

If a scalar kernel is $C^1$ then a simple integration by parts argument
 shows that the corresponding operator is trace class~\cite{Bornemann,lax2002functional}.
 However, the $C^1$ condition can be overly restrictive. For example,
 for the Birman-Schwinger integral operators in which we are interested,
  the kernel, $K=K(x,y)$, is continuous, but not differentiable,
 across the diagonal, $y=x$. 
 Nevertheless, under some reasonable assumptions it can be shown that 
 $K$ is Lipschitz-continuous.

In his seminal 1903 paper~\cite{Fred1903}, 
Fredholm showed that a regular Fredholm determinant can be
defined  for a scalar-valued, H\"older continuous kernel 
with H\"older exponent greater than a half. Although  
he did not yet have the concept of a trace class operator, 
Gohberg, Goldberg and Krupnik~\cite{GGK} showed that
Fredholm's calculations can be used to show that the associated
integral operator is trace class. 
In 1909, Mercer~\cite{mercer1909} showed that a
non-negative definite Hermitian operator on a finite interval
with a continuous scalar-valued kernel  is trace class
and that the eigen-expansion of the kernel converges uniformly.
 It is not possible to drop the definiteness assumption in Mercer's Theorem
 since Carlemann~\cite{carleman1918uber} 
 constructed a $C^0$-function, $k$, whose
 Fourier coefficients are in $\ell_2$ but not in $\ell_1$.
 Consequently, the operator with continuous 
 kernel $K(x,y) = k(x-y)$ is not trace class.  
 Subsequently, for each H\"older exponent, $\gamma \leq 1/2$,
 Bernstein~\cite{bernstein1934convergence} constructed  a function in the H\"older space,
 $C^{0,\gamma}$, with the same property. 
 This example shows that the estimate underlying  in Fredholm's result is sharp. 
 
 In 1965,  Weidmann~\cite{weidmann1966integraloperatoren}
 obtained an alternate proof of Fredholm's result which
  relies on Mercer's Theorem~\cite{mercer1909,smithies1958} 
and on ideas from Fourier analysis due to Hardy and Littlewood~\cite{hardy1928convergence}. 
Brislawn~\cite{brislawn1988kernels,brislawn1991traceable}  extended 
Weidmann's result from kernels on a compact subset of Euclidean
space to a general measure-theoretic setting.
He showed that in the nonnegative definite case 
a kernel is trace class if and only if the
integral over the diagonal of the Hardy-Littlewood maximal function
of the kernel is finite. 
More recently,  Delgado and Ruzhansky~\cite{delgado2021schatten}
obtained sharp conditions on an $L^2$-kernel that 
ensure that the corresponding operator is trace class.
These conditions essentially state that the action of the inverse of the 
Laplacian (or a related unbounded differential operator)
on the kernel lies in a particular Schatten class.

Mercer's theorem underlies the theory of reproducing kernel Hilbert (RKH) spaces ~\cite{aronszajn1950theory,Smale}. Given a Hilbert space, $\sH$, 
an RKH  space is a Hilbert space, $\cH\subset L^2(X;\sH)$, whose
elements are functions,  $f:X\to\sH$,
for which the evaluation operator, $L_x: \cH \to \sH$ given by 
$L_x(f) = f(x)$ is continuous for all $x\in X$. 
In the case that $\sH$ is one-dimensional, 
the Riesz representation theorem implies that 
$\cH$ is characterized by a kernel, $K(x,y)$, with
the property that $L_x(f) = \int_X K(x,y)f(y)\, dy$. 
Recently De Vito, Umanita and Villa~\cite{de2013extension} extended Mercer's theorem to  RKH spaces  of vector-valued
functions. Their focus was on 
establishing the uniform convergence
of the eigen-expansion for matrix-valued kernels that were already 
assumed to be trace class. 
Carmello, De Vito, and Toigo~\cite{carmeli2006vector}
established a version of Mercer's theorem that they used to
study the relationship between bounded,
positive type, operator-valued kernels ($\dim\sH = \infty$)
and the associated RKH space, $\cH$.
Matrix and operator-valued kernels have also been used 
in machine learning applications~\cite{minh2016operator,reisert2007learning}.

In this paper, we 
generalize Mercer's and Weidmann's theorems to obtain
conditions that guarantee that an  operator-valued kernel defines
a trace class operator. 
Our main results  can be summarized as follows.
First, in the special case that $\sH$ is finite dimensional, we will 
prove that  a matrix-valued
kernel that is defined on a compact set  and that is 
H\"older continuous with H\"older 
exponent greater than a half 
gives rise to an integral operator that is trace class.
This result is  a special case of Theorem~\ref{thm:TraceClassInfDimCpt}.
Then, in Theorem~\ref{TraceClassThmRealLine},
we will show that an analogous result also holds
for matrix-valued kernels on the real line, provided that an additional 
exponential decay assumption holds.
While these results are not surprising, we have not been able to find  statements
or proofs of them in the literature. 

However, when $\sH$ is infinite dimensional such a H\"older continuity condition is not enough to guarantee that $\cK$ is trace class. 
In fact, as we show in Example~\ref{ex:MercerCounterExample}, 
it is straightforward to construct a \emph{constant}
kernel that takes values in $\cB_2(\ell_2)$ and which is not trace class. 
(Here $\ell_2$ denotes the space square summable  sequences.)

In the main result of the paper (Theorem~\ref{thm:TraceClassInfDimCpt}), we identify an additional boundedness
condition which ensures that $\cK$ is  trace class.
As we recalled above, the definition of a trace class operator 
involves the symmetrized operator, $\cP$, associated with $\cK$. 
For this reason it not surprising that this boundedness condition
is  a condition on the 
kernel, $P$, of the symmetrized  operator, $\cP$, associated with $\cK$.
Specifically, we assume  that $P \in L^\infty(X\times X; \cB_1(\sH))$.
In other words, there is a constant, $C$, so that for almost all $(x,y)\in X\times X$,
$\| P(x,y) \|_{\cB_1(\sH)} < C$.  We acknowledge that this condition may be hard
to verify since the kernel, $P$, is defined in terms of the eigen-expansion of
$\cK$, which may not be readily available. However, we were not able to
formulate the desired condition directly on the kernel $K$.

In the special case of matrix-valued kernels on a compact set
we obtained two proofs of Theorem~\ref{thm:TraceClassInfDimCpt},
 which are based on the classical proofs 
 of Weidmann~\cite{weidmann1966integraloperatoren} and Fredholm~\cite{Fred1903,GGK}
 for scalar-valued kernels discussed above. 
 While Weidmann's proof can 
 be further extended to operator-valued kernels,  Fredholm's cannot. 
In Section~\ref{sec:Mercer} we state and prove a generalization of
Mercer's Theorem for operator-valued kernels and 
in Section~\ref{sec:OperatorValued} we show how the ideas in Weidmann's paper 
can be extended to prove our main result, Theorem~\ref{thm:TraceClassInfDimCpt}.
In the Appendix, we provide a coarse outline of the generalization
of Fredholm's approach to matrix-valued kernels.

The classical Mercer's Theorem states if $\cP$ is a 
non-negative definite Hermitian operator on a finite interval
with a \emph{continuous} scalar-valued kernel,  then $\mathcal P$ is trace class. 
Furthermore,  there is an orthonormal
set,  $\{\phi_\ell\}_{\ell=1}^\infty$, of continuous eigenfunctions
 and non-negative eigenvalues, $ \{\lambda_\ell\}_{\ell=1}^\infty$ ,
so that the series,
\begin{equation}
P(x,y) \,\,=\,\, \sum\limits_\ell \lambda_\ell \,\phi_\ell(x)\overline{\phi_\ell(y)},
\end{equation}
 converges absolutely and uniformly.
 In Theorems~\ref{thm:MercerA} and \ref{thm:MercerB}, we extend
 Mercer's Theorem to the case of operator-valued kernels. Because of the
 constant kernel counterexample in Example~\ref{ex:MercerCounterExample}, to prove the theorem
  in the infinite-dimensional case we needed to impose an
 additional condition that in essence says that the $\cB_2(\sH)$-valued 
 kernel is actually $\cB_1(\sH)$-valued on the diagonal in $X\times X$. 
Since  Mercer's Theorem is a fundamental result in the theory of reproducing 
 kernel Hilbert spaces, Theorem~\ref{thm:MercerB} 
 may be of independent interest. 
 
In section~\ref{sec:Bochner} we review properties of the  Bochner integral
which extends concept of the Lebesgue integral to Banach-space
valued functions.   In section~\ref{sec:HS} we review material on trace class and Hilbert-Schmidt operators. 
In particular, we establish eigen-expansion formulae for a Hilbert-Schmidt
operator and its kernel on the Bochner space,  $L^2(X;\sH)$. 
In section~\ref{sec:traceformula} we obtain an integral formula for the trace of a trace class
operator on  $L^2(X;\sH)$, which is used in section~\ref{sec:Mercer}.
In section~\ref{sec:Mercer} we state and prove
Mercer's theorem for operator-valued kernels and in section~\ref{sec:OperatorValued}
we establish the main result which gives conditions under which
a  general operator-valued
kernel on a compact set is trace class. In section~\ref{sec:TraceClassProofonReals}, we consider the case of
a matrix-valued kernel on the real line, which is important for applications
to nonlinear wave equations, and in the Appendix, 
we outline an
alternate proof of the main result in the finite dimensional case.

\section{Bochner integrals and Bochner spaces}\label{sec:Bochner}

Let $X\subset \mathbb R^m$ be Lebesgue measurable
and let $\sY$ be a Banach space.
We begin by reviewing the definition of the Bochner spaces, $L^p(X;\sY),$~\cite{Arendt2011,mikusinski1978bochner}.  
A function, $s: X\to\sY$ is \emph{simple} if there are $y_1,\cdots,y_n\in \sY$ and
pairwise disjoint measurable sets, $E_1,\cdots,E_n$ in $X$ of finite measure
so that
\begin{equation}
s(x) \,\,=\,\, \sum_{i=1}^n \chi_{E_i}(x) \, y_i,
\end{equation}
where $\chi_E : X \to \bbR$ is the characteristic function of the set $E$.
In this case,   we define
\begin{equation}
\int_X s\, dx \,\,=\,\, \sum\limits_{i=1}^n \mu(E_i) y_i \,\, \in \,\, \sY,
\end{equation}
where $\mu$ denotes Lebesgue measure.
A function, $f: X\to\sY$, is \emph{strongly measurable} if there are simple functions, $s_n$, so that
\begin{equation}
\lim\limits_{n\to\infty} \| f(x) - s_n(x) \|_\sY = 0, \qquad \text{for almost all }x\in X.
\end{equation}
In this case, the function,  $\| f(\cdot)\|_\sY : X \to \mathbb R^+$, is Lebesgue measurable. 
A strongly measurable function, $f: X\to\sY$, is \emph{Bochner integrable}  if there are simple functions, $s_n$, so that
\begin{equation}
\lim\limits_{n\to\infty} \int_X \| f(x) - s_n(x) \|_\sY \, dx = 0.
\end{equation}
In this case, we define
\begin{equation}
\int_X f\, dx \,\,=\,\, \lim\limits_{n\to\infty}  \int_X s_n\, dx,
\end{equation}
where the limit is taken with respect to the norm on $\sY$. 
We note that $f$ is Bochner integrable if and only if $\int_X \| f(x)\|_\sY \, dx
< \infty$, that is if $\| f(\cdot) \|_\sY \in L^1(X;\mathbb C)$.
In particular, 
\begin{equation}\label{eq:NormIntegralSwitch}
\left\| \int_X f\, dx \right\|_\sY \,\,\leq \,\, \int_X \| f(x) \|_\sY \, dx.
\end{equation} 
Let $p\geq 1$. The \emph{Bochner space}, $L^p(X;\sY)$, is the space of strongly measurable functions, $f:X\to\sY$, so that
\begin{equation}
\| f \|_{L^p(X;Y)} \,\,:=\,\, \left(  \int_X \| f(x) \|^p_\sY \, dx \right)^{1/p} 
\,\, < \,\,\infty.
\end{equation}
We note that $L^p(X;\sY)$ is a Banach space and that  all the standard 
Lebesgue measure theory results in the special case $\sY=\mathbb R$
carry over to general Banach spaces, $\sY$~\cite{mikusinski1978bochner}. 
In particular, by \cite[Theorem 2.4.6-7]{BirmanSolomjak1987}, 
if $\sH$ is a separable Hilbert space, then 
$L^2(X;\sH) $ is a separable Hilbert space with inner product
\begin{equation}
\langle f_1, f_2 \rangle_{L^2(X;H)} \,\,:=\,\,  \int_X \langle f_1(x), f_2(x) \rangle_\sH \, dx.
\end{equation}
Furthermore, if $\{\varphi_\alpha\}_\alpha$ is an orthonormal basis for
$L^2(X,\mathbb R)$ and $\{h_k\}_k$ is an orthonormal basis for $\sH$
then the set of all  $\varphi_{\alpha k} \in L^2(X;\sH)$ defined by
\begin{equation}\label{eq:BasisL2XH}
\varphi_{\alpha k} (x) := \varphi_{\alpha} (x)\, h_k 
\end{equation}
forms an orthonormal basis for $L^2(X,\sH)$.
For future reference we note that if $f\in L^1(X;\sH)$ and $h \in \sH$, then
\begin{equation}\label{eq:SwapIntInnerProd}
\left\langle \int_X f(x)\, dx , g\right\rangle_\sH \,\,=\,\,
\int_X \langle  f(x)\ , g\rangle_\sH, dx.
\end{equation}

 \section{Hilbert-Schmidt operators on  $L^2(X;\sH)$}\label{sec:HS}
 
In this section we summarize some background material
on trace class and Hilbert-Schmidt operators 
on the Bochner space, $L^2(X;\sH)$,
much of which is discussed in Birman and Solomjak~\cite{BirmanSolomjak1987}.

Let $\mathsf{H}$ be a real or complex separable 
Hilbert space with inner product $\langle \cdot, \, \cdot\rangle_\sH$, which is conjugate linear in the first slot and linear in the second slot.  
 We let $\cB(\cdot)$, $\cB_p(\cdot)$, and $\cB_\infty(\cdot)$, 
denote the sets of bounded, $p$-Schatten-von Neumann, and compact
operators, respectively, acting  on a Hilbert space $(\cdot)$.

 Let  $\cK\in \cB_\infty (\sH)$ be  a compact operator on  $\sH$.
Since $\cK\cK^*$ and $\cK^*\cK$ are compact, self-adjoint operators, 
by the spectral theorem there exist orthonormal bases of eigenfunctions 
$\{\psi_\ell\}_{\ell=1}^\infty$ and $\{\phi_\ell\}_{\ell=1}^\infty$ such that 
\begin{align}
\label{KK1}
\cK^*\cK\phi_\ell=\mu_\ell^2\phi_\ell, \,\psi_\ell=\frac1{\mu_\ell}\cK\phi_\ell,\,
\cK\cK^*\psi_\ell=\mu_\ell^2\psi_\ell,\, \phi_\ell=\frac1{\mu_\ell}\cK^*\psi_\ell,
\end{align}
where $\mu_\ell$ are the singular values of the operator $\cK$, that is, the eigenvalues of the operator $\cP=(\cK\cK^*)^{1/2}$, ordered such that $\mu_1\ge\mu_2\ge\dots \geq 0$, $\mu_\ell\to0$ as $\ell\to\infty$.

We define $\cK$ to be \emph{trace class} if
$\| \cK \|_{\cB_1 (\sH)} :=  \sum\limits_{k=1}^\infty \mu_k 
\,\,< \,\, \infty$ and we let $\cB_1 (\sH)$ denote the space of 
trace class operators on $\sH$.
The trace of  a trace class operator is defined  by
\begin{equation}\label{eq:DefTrace}
\operatorname{Tr}(\mathcal K) \,\,=\,\, \sum_k
\langle h_k, \mathcal K h_k \rangle_\sH,
\end{equation}
where here and below $\{h_k\}$ is any orthonormal basis for $\mathcal H$. 
Then~\cite{Simon,lidskii1959non}, 
\begin{equation}\label{eq:TrB1ineq}
| \operatorname{Tr}(\mathcal K) | \,\,\leq\,\, \| \mathcal K \|_{\cB_1(\sH)},
\end{equation}
and
\begin{equation}\label{eq:TraceByEval}
\operatorname{Tr}(\mathcal K) 
 \,\,=\,\, \sum\limits_\ell \mu_\ell \langle \phi_\ell, \psi_\ell\rangle_\sH
 \,\,=\,\,  \sum\limits_\ell \lambda_\ell,
\end{equation}
where $\{ \lambda_\ell \}$ is the set of eigenvalues of 
$\mathcal K$, counted with algebraic multiplicity.

Similarly,
we define $\cK$ to be \emph{Hilbert-Schmidt} if
\begin{equation}\label{eq:HSnormViaSingValues}
\| \cK \|_{\cB_2 (\sH)} := \left( \sum\limits_{k=1}^\infty \mu_k^2 \right)^{1/2} 
\,\,< \,\, \infty,
\end{equation}
 and we let $\cB_2 (\sH)$ denote the space of 
Hilbert-Schmidt operators on $\sH$. We recall that every trace class operator
is Hilbert-Schmidt. 
By \cite[Theorem 11.3.1]{BirmanSolomjak1987}, 
\begin{equation}\label{eq:HSnormViaONB}
\| \cK \|_{\cB_2 (\sH)} = \left( \sum\limits_{k=1}^\infty \| \cK h_k\|^2_\sH
\right)^{1/2}.
\end{equation}
Consequently, $\cB_2 (\sH)$ is  a Hilbert space with inner product
\begin{equation}\label{eq:HSInnerProductViaONB}
\langle \cK_1, \cK_2 \rangle_{\cB_2 (\sH)} := 
\sum\limits_{k=1}^\infty \langle \cK_1h_k, \cK_2 h_k \rangle_\sH.
\end{equation} 
For future use we note that, by \eqref{eq:NormIntegralSwitch},
if $\cK\in L^1(X;\cB_p(\sH))$ (for $p=1, 2$) then
\begin{equation}\label{eq:TriInt}
\left\| \int_X \cK(x)\, dx \right\|_{\cB_p (\sH)} \,\,\leq\,\, 
\int_X \|\cK(x) \|_{\cB_p (\sH)}\, dx.
\end{equation}
Furthermore, we recall that
$\cB_p (\sH)$  is a two-sided ideal in 
$\cB (\sH)$~\cite{Simon}, and that 
\begin{equation}\label{eq:SubMult}
\| \cK_1 \cK_2 \|_{\cB_p (\sH)} \,\,\leq\,\,
\| \cK_1  \|_{\cB (\sH)} \| \cK_2 \|_{\cB_p (\sH)} \,\,\leq\,\,
\| \cK_1  \|_{\cB_p (\sH)} \| \cK_2 \|_{\cB_p (\sH)}.
\end{equation}

Given $a,b\in\sH$ we let $a\otimes b :\sH\to\sH$ be the rank one operator
defined by 
\begin{equation}\label{eq:RankOneOp}
(a\otimes b)h=\langle b, h\rangle_\sH\, a, \qquad h\in\sH.
\end{equation}
By the Riesz representation theorem, every rank one, bounded linear operator 
on $\sH$ is of the form $a\otimes b$ for some $a,b\in\sH$. 
It is easy to check that  if $\{ h_k\}_k$ is  an
orthonormal basis of $\sH$, then $\{ h_k \otimes h_\ell\}_{k,\ell}$
is an orthonormal set in  $\cB_2 (\sH)$. 
Moreover, by  \cite[Theorem 11.3.4]{BirmanSolomjak1987}, this set is a basis
for $\cB_2 (\sH)$.

For future use we summarize some facts about rank one operators.
Since the eigenvalues of  $a\otimes b$ are $\{\langle b,a\rangle_\sH,0\}$, we have that 
\begin{equation}\label{trab}
\tr(a\otimes b)=\langle b,a\rangle_\sH.
\end{equation}
 Furthermore, if $a,b,c,d\in\sH$ then
\begin{equation}\label{prodform} 
(a\otimes b)^*=b\otimes a \text{ and }
\big((a\otimes b)(c\otimes d)\big)h=\langle b,c\rangle_\sH(a\otimes d)h,\, h\in\sH.
\end{equation}
By \eqref{prodform} we have $(a\otimes b)(a\otimes b)^*=\|b\|^2_\sH(a\otimes a)$. If $\|a\|_\sH=1$ then $P_a=a\otimes a$ is the orthogonal projection in $\sH$ onto ${\rm span } \{a\}$.  So, for any $a$, the spectrum of $a\otimes a$ in $\sH$  is $\{\|a\|_\sH^2,0\}$. Therefore, the spectrum of the operator $\big((a\otimes b)(a\otimes b)^*\big)^{1/2}$ is the set $\{\|a\|_\sH\,\|b\|_\sH,0\}$, and so
\begin{equation}\label{normab}
\|a\otimes b\|_{\cB(\sH)}=\|a\otimes b\|_{\cB_1(\sH)}=\|a\otimes b\|_{\cB_2(\sH)}=\|a\|_\sH\|b\|_\sH.
\end{equation}

Next we state some results about  integral operators that are defined by  
operator-valued  kernels~\cite{BirmanSolomjak1987}.
Let $X\subset \mathbb R^m$ be a compact set.
We  consider an operator-valued Hilbert-Schmidt kernel,
\begin{equation}\label{KL2}
K\in L^2(X\times X; \cB_2(\sH)),
\end{equation} whose values, $K(x,y)\in\cB_2(\sH) $,  for $x,y\in X$, 
are Hilbert-Schmidt operators on  $\sH$.
We associate to $K$  the  integral operator, $\cK : L^2(X;\sH) \to L^2(X;\sH)$,
defined by
\begin{equation}\label{defcK}
(\cK \psi)(x)=\int_X K(x,y)\psi(y)\,dy, \qquad \text{for } \psi\in L^2(X;\sH),
\end{equation}
where $K(x,y)\psi(y)$ denotes the action of the  operator, $K(x,y)$,
on the element,  $\psi(y)$, of $\sH$. 
The following results justify these statements. 

\begin{proposition}
Let $K\in L^2(X\times X; \cB(\sH))$ and $\psi\in L^2(X;\sH)$.
Then for almost all $x\in X$, we have $K(x,\cdot)\psi(\cdot) \in L^1(X;\sH)$
and  so
\begin{equation}\label{cKdef2}
(\cK \psi)(x)=\int_X K(x,y)\psi(y)\,dy
\end{equation}
is defined. Moreover, $\cK \in \cB(L^2(X;\sH))$ and 
$\| \cK \|_{\cB(L^2(X;\sH))} \leq \| K \|_{L^2(X\times X; \cB(\sH))}$.
\end{proposition}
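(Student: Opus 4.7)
The plan is to reduce everything to scalar Lebesgue/Fubini theory by working with the $\sH$-norms of the relevant integrands and then invoking Cauchy--Schwarz. First, I would check the pointwise measurability of $y\mapsto K(x,y)\psi(y)$ for a.e.\ $x$. Because $K$ is strongly measurable on $X\times X$ into $\cB(\sH)$ and $\psi$ is strongly measurable on $X$ into $\sH$, the map $(x,y)\mapsto K(x,y)\psi(y)$ is strongly measurable into $\sH$ (approximate $K$ and $\psi$ by simple functions and pass to the limit), and hence so is the real-valued map $(x,y)\mapsto \|K(x,y)\psi(y)\|_\sH$. By Tonelli applied to $(x,y)\mapsto \|K(x,y)\|_{\cB(\sH)}^2$, which is in $L^1(X\times X)$ by hypothesis, the slice function $y\mapsto \|K(x,y)\|_{\cB(\sH)}^2$ is in $L^1(X)$ for a.e.\ $x\in X$.

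Second, for each such $x$ I would estimate
\begin{equation}
\int_X \|K(x,y)\psi(y)\|_\sH\, dy \,\le\, \int_X \|K(x,y)\|_{\cB(\sH)}\,\|\psi(y)\|_\sH\, dy
\,\le\, \Bigl(\int_X \|K(x,y)\|_{\cB(\sH)}^2\, dy\Bigr)^{1/2}\|\psi\|_{L^2(X;\sH)}
\end{equation}
by the operator-norm inequality and Cauchy--Schwarz. This is finite for a.e.\ $x$, which by the integrability criterion recalled after \eqref{eq:NormIntegralSwitch} shows $K(x,\cdot)\psi(\cdot)\in L^1(X;\sH)$, so the Bochner integral defining $(\cK\psi)(x)$ exists.

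Third, I would square the previous display, integrate in $x$, and apply Tonelli in the scalar integrand $\|K(x,y)\|_{\cB(\sH)}^2$ to obtain
\begin{equation}
\|\cK\psi\|_{L^2(X;\sH)}^2 \,\le\, \int_X \Bigl(\int_X \|K(x,y)\|_{\cB(\sH)}^2\, dy\Bigr)\, dx \;\|\psi\|_{L^2(X;\sH)}^2 \,=\, \|K\|_{L^2(X\times X;\cB(\sH))}^2 \,\|\psi\|_{L^2(X;\sH)}^2,
\end{equation}
using \eqref{eq:NormIntegralSwitch} for the pointwise bound $\|(\cK\psi)(x)\|_\sH\le \int_X\|K(x,y)\psi(y)\|_\sH\,dy$. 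This simultaneously shows that $\cK\psi\in L^2(X;\sH)$ (once strong measurability of $x\mapsto (\cK\psi)(x)$ is noted, which follows by approximating $K$ by simple $\cB(\sH)$-valued kernels) and gives the claimed operator norm bound.

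The main obstacle is the strong measurability bookkeeping: one must justify that $(x,y)\mapsto K(x,y)\psi(y)$ is strongly measurable into $\sH$ and that the Bochner integral $x\mapsto \int_X K(x,y)\psi(y)\,dy$ defines a strongly measurable $\sH$-valued function. The clean way is to approximate $K$ in $L^2(X\times X;\cB(\sH))$ by simple kernels of the form $\sum_j \chi_{E_j}(x,y)\, T_j$ with $T_j\in\cB(\sH)$; for such kernels the measurability claims are immediate, the estimates above transfer verbatim, and a Cauchy-in-$L^2$ argument for $\cK_n\psi$ yields the general case by completeness of $L^2(X;\sH)$.
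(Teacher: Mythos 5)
Your argument is correct and follows essentially the same route as the paper's proof: bound $\|(\cK\psi)(x)\|_\sH$ via \eqref{eq:NormIntegralSwitch}, apply the operator-norm inequality and Cauchy--Schwarz in $y$, and then use Fubini/Tonelli on $\|K(x,y)\|^2_{\cB(\sH)}$ to get both the a.e.\ finiteness and the bound $\|\cK\|_{\cB(L^2(X;\sH))}\le\|K\|_{L^2(X\times X;\cB(\sH))}$. The additional strong-measurability bookkeeping via simple-kernel approximation is a fine (and slightly more careful) elaboration of the same argument, not a different approach.
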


The next result~\cite[Theorem 11.3.6]{BirmanSolomjak1987} says that
in order for $\cK$ to be a Hilbert Schmidt operator 
on $L^2(X;\sH)$,  the kernel must take values in $\cB_2(\sH)$ rather than
in $\cB(\sH)$.

\begin{theorem}\label{thm:HSopB2ker}
Let $K\in L^2(X\times X; \cB_2(\sH))$ be an operator-valued Hilbert-Schmidt kernel, and define $\cK$ by \eqref{cKdef2}. 
Then $\cK \in \cB_2(L^2(X;\sH))$ is a Hilbert-Schmidt operator 
on $L^2(X;\sH)$ and 
\begin{equation}\label{eq:HSopB2kerIsom}
\| \cK \|_{\cB_2(L^2(X;\sH))} = \| K \|_{L^2(X\times X; \cB_2(\sH))}.
\end{equation}
Conversely, for every $\cK \in \cB_2(L^2(X;\sH))$ there is a unique 
$K\in L^2(X\times X; \cB_2(\sH))$ for which \eqref{cKdef2} holds.
\end{theorem}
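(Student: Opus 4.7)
The plan is to reduce the operator-valued Hilbert--Schmidt identity \eqref{eq:HSopB2kerIsom} to the scalar version by expanding the $\cB_2(\sH)$-valued kernel in the orthonormal basis $\{h_k \otimes h_\ell\}_{k,\ell}$ of $\cB_2(\sH)$ and by expanding functions in $L^2(X;\sH)$ in the orthonormal basis $\{\varphi_{\alpha k}\}_{\alpha,k}$ from \eqref{eq:BasisL2XH}. For $K \in L^2(X\times X;\cB_2(\sH))$, set $K_{\ell k}(x,y) := \langle h_\ell, K(x,y) h_k\rangle_\sH$. Because $\{h_\ell \otimes h_k\}$ is an orthonormal basis of $\cB_2(\sH)$, Parseval's identity applied pointwise gives $\|K(x,y)\|^2_{\cB_2(\sH)} = \sum_{k,\ell} |K_{\ell k}(x,y)|^2$; integrating via Fubini--Tonelli then yields $\|K\|^2_{L^2(X\times X;\cB_2(\sH))} = \sum_{k,\ell} \|K_{\ell k}\|^2_{L^2(X\times X)}$. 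In particular, each $K_{\ell k}$ lies in $L^2(X\times X)$ and defines a scalar Hilbert--Schmidt operator $\cK_{\ell k}$ on $L^2(X)$ for which the classical scalar identity $\|\cK_{\ell k}\|_{\cB_2(L^2(X))} = \|K_{\ell k}\|_{L^2(X\times X)}$ holds.

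\textbf{Direct direction.} A direct computation on basis elements gives
\[
(\cK \varphi_{\alpha k})(x) = \int_X K(x,y) h_k \,\varphi_\alpha(y)\,dy = \sum_\ell (\cK_{\ell k} \varphi_\alpha)(x)\, h_\ell,
\]
so by the Pythagorean theorem in $\sH$, $\|\cK \varphi_{\alpha k}\|^2_{L^2(X;\sH)} = \sum_\ell \|\cK_{\ell k} \varphi_\alpha\|^2_{L^2(X)}$. Summing over $\alpha, k$ and invoking \eqref{eq:HSnormViaONB} for the scalar operators $\cK_{\ell k}$ yields
\[
\sum_{\alpha,k} \|\cK \varphi_{\alpha k}\|^2_{L^2(X;\sH)} = \sum_{\ell,k} \|\cK_{\ell k}\|^2_{\cB_2(L^2(X))} = \|K\|^2_{L^2(X\times X;\cB_2(\sH))} < \infty,
\]
which by \eqref{eq:HSnormViaONB} simultaneously shows $\cK \in \cB_2(L^2(X;\sH))$ and establishes the isometry \eqref{eq:HSopB2kerIsom}.

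\textbf{Converse and main obstacle.} The direct direction realizes $K \mapsto \cK$ as a linear isometry of Banach spaces, so its image is automatically closed and the uniqueness claim is immediate. To obtain surjectivity, I would show that the image contains all rank-one operators $f \otimes g$ with $f,g \in L^2(X;\sH)$. Combining the definition of the $L^2(X;\sH)$ inner product with \eqref{eq:RankOneOp} gives
\[
((f \otimes g)\psi)(x) = \int_X \bigl(f(x) \otimes g(y)\bigr)\, \psi(y)\, dy,
\]
exhibiting $K(x,y) := f(x)\otimes g(y)$ as the kernel of $f \otimes g$. By \eqref{normab}, $\|K(x,y)\|_{\cB_2(\sH)} = \|f(x)\|_\sH \|g(y)\|_\sH$, hence $K \in L^2(X\times X;\cB_2(\sH))$ with $\|K\|_{L^2(X\times X;\cB_2(\sH))} = \|f\|_{L^2(X;\sH)} \|g\|_{L^2(X;\sH)}$. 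Since finite linear combinations of such rank-one operators are dense in $\cB_2(L^2(X;\sH))$, the closed image of the isometry coincides with all of $\cB_2(L^2(X;\sH))$. The main technical point I anticipate is verifying strong measurability of $(x,y)\mapsto f(x)\otimes g(y)$ as a $\cB_2(\sH)$-valued function on $X\times X$; this can be handled by approximating $f$ and $g$ by simple $\sH$-valued functions and exploiting the Lipschitz continuity of $(a,b)\mapsto a\otimes b$ in $\cB_2(\sH)$ that follows from \eqref{normab} and the triangle inequality.
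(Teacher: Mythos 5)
Your proposal is correct, but it is organized differently from the paper's argument (which is essentially the Birman--Solomjak route, Theorem 11.3.6 there): the paper proves the whole statement in one stroke by checking that the kernel-to-operator map $\cT$ sends the orthonormal basis $\Psi_{\alpha k,\beta\ell}(x,y)=\varphi_\alpha(x)\varphi_\beta(y)\,h_k\otimes h_\ell$ of $L^2(X\times X;\cB_2(\sH))$ onto the orthonormal basis $\varphi_{\alpha k}\otimes\varphi_{\beta\ell}$ of $\cB_2(L^2(X;\sH))$, so $\cT$ is unitary and isometry plus surjectivity come simultaneously. You instead split the work: the forward direction is a componentwise reduction to the classical scalar kernel theorem via the matrix entries $K_{\ell k}$ and Parseval/Pythagoras over the basis $\{\varphi_{\alpha k}\}$, and surjectivity comes from closedness of the range of an isometry together with density of finite-rank operators, whose kernels $f(x)\otimes g(y)$ you exhibit explicitly. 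The trade-off: your argument leans on the scalar Hilbert--Schmidt kernel theorem as a black box but avoids verifying that the $\Psi_{\alpha k,\beta\ell}$ form a complete orthonormal system in the Bochner space, while the paper's basis-matching argument is self-contained in one computation. Two small points you should make explicit: (i) before invoking \eqref{eq:HSnormViaONB} you need $\cK$ to be a bounded operator on all of $L^2(X;\sH)$, which is exactly the preceding proposition (since $\|\cdot\|_{\cB(\sH)}\le\|\cdot\|_{\cB_2(\sH)}$ the $\cB_2$-valued kernel is also a $\cB(\sH)$-valued $L^2$ kernel); and (ii) the identity $(\cK\varphi_{\alpha k})(x)=\sum_\ell(\cK_{\ell k}\varphi_\alpha)(x)h_\ell$ is most cleanly justified componentwise via \eqref{eq:SwapIntInnerProd}, i.e.\ $\langle h_\ell,(\cK\varphi_{\alpha k})(x)\rangle_\sH=(\cK_{\ell k}\varphi_\alpha)(x)$, rather than by an unjustified interchange of sum and Bochner integral. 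Neither point is a gap, just bookkeeping worth recording.
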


\begin{example}
In the finite dimensional case that $\sH = \bbC^n$, the space of Hilbert-Schmidt
operators is simply the space of square matrices, $\cB_2(\bbC^n) =
\bbC^{n\times n}$, so that $K\in L^2(X\times X;\mathbb C^{n\times n})$ is a matrix-valued Hilbert-Schmidt kernel.
In this case, if we regard $\psi\in L^2(X;\bbC^n)$ as being a column vector, then
$K(x,y)\psi(y)$ is given by matrix-vector multiplication. 
\end{example}

\begin{example}
Let $\sH=\ell_2$ be the space of square-integrable sequences  and let 
$\{\mathbf e_i\}_{i=1}^\infty$ be the standard basis for $\ell_2$. If 
$\psi(x) = \sum\limits_{k=1}^\infty \psi_k(x)\mathbf e_k$ and 
$K(x,y) = \sum\limits_{i,j=1}^\infty K_{ij}(x,y)\mathbf e_i\otimes \mathbf e_j$,
then $K\in L^2(X\times X;\cB_2(\ell))$ if
$\int\limits_{X\times X}\sum\limits_{i,j=1}^\infty |K_{ij}(x,y)|^2 \, dxdy< \infty$ and
\begin{equation}
(\cK \psi)(x) = \int_X \sum\limits_{i,j=1}^\infty K_{ij}(x,y) \psi_j(y) \, dy \, \mathbf e_i
\end{equation}
\end{example}

\begin{example}\label{ex:L2Iso}
Let $X$ and $Y$ be compact sets and let $\sH = L^2(Y;\mathbb C)$.
Then there is an isomorphism, 
$T:  L^2(X\times Y;\mathbb C)\to L^2(X;L^2(Y;\mathbb C))$ given by 
 $T(\Psi)(x) = \Psi(x,\cdot)$ and
$T^{-1}(\psi)(x,y) = \psi(x)(y)$.
The induced isomorphism 
\begin{equation}\label{eq:L2Iso1}
T: \cB_2(L^2(X\times Y;\mathbb C))
\to \cB_2(L^2(X;L^2(Y;\mathbb C)))
\end{equation}
 denoted by $ \cK = T(\widetilde\cK)$
is given by
\begin{equation}
(\cK\psi)(x) = (\widetilde\cK\Psi)(x,\cdot)
= \int_X  K(x,x')\psi(x')\, dx'
\end{equation}
where $\psi \in L^2(X;L^2(Y;\mathbb C))$ and 
\begin{equation}\label{eq:L2Iso2}
 K(x,x')\psi(x') = \int_Y \widetilde K( (x,\cdot),(x',y'))\Psi(x',y')\, dy',
\end{equation}
where $\widetilde K\in L^2((X\times Y) \times (X\times Y);\mathbb C)$ is the kernel
for $\widetilde\cK$ and 
$K\in L^2(X\times X; \cB_2(L^2(Y;\mathbb C)))$
is the kernel for $ \cK$.
\end{example}

\begin{remark}\label{rem:compositionHS}
Let  $K_1$ and $K_2$ denote the kernels of Hilbert-Schmidt operators,
$\cK_1$ and $\cK_2$, on $L^2(X;\cH)$. Then the kernel,
\begin{equation}\label{eq:compositionHSkernels}
L(x,y) \,\,=\,\, \int_X K_1(x,z)\, K_2(z,y)\, dz,
\end{equation}
is in $L^2(X\times X; \cB_2(\sH))$ since by \eqref{eq:TriInt}, \eqref{eq:SubMult},
and the standard H\"older inequality,
$\| L \|_{L^2(X\times X; \cB_2(\sH))}
\leq \| K_1 \|_{L^2(X\times X; \cB_2(\sH))}\,  
\| K_2 \|_{L^2(X\times X; \cB_2(\sH))}$.
Therefore, by Theorem~\ref{thm:HSopB2ker}, $L$ is the kernel of the composite
$\cK_1\cK_2$ operator on $L^2(X;\cH)$.
\end{remark}

\begin{theorem}\label{thm:KernelExpansion}
The kernel, $K$, of the operator $\cK$ in Theorem~\ref{thm:HSopB2ker} 
is given in terms of the singular values, $\mu_\ell$, and eigenfunctions,
$\psi_\ell$ and $\phi_\ell$, in \eqref{KK1} by
\begin{equation}\label{Kform}
K(x,y)\,\,=\,\,\sum_{\ell=1}^\infty\mu_\ell\,\psi_\ell(x)\otimes\phi_\ell(y),
\end{equation}
where the series on the RHS converges  in the norm 
on  $L^2(X\times X;\cB_2(\sH))$, that is,
\begin{equation}\label{KformII}
\int_X\int_X\|K(x,y)-\sum_{\ell=1}^n\mu_\ell\,\psi_\ell(x)\otimes\phi_\ell(y)\|_{\cB_2(\sH)}^2\,dxdy\to0\text{ as $n\to\infty$}.\end{equation}
\end{theorem}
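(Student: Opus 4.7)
My approach is to reduce the kernel expansion to the singular-value decomposition of $\cK$ viewed as an operator on $L^2(X;\sH)$, and then transfer the resulting convergence back to the kernel via the isometry \eqref{eq:HSopB2kerIsom} of Theorem~\ref{thm:HSopB2ker}. Concretely, I would first establish the operator identity
\begin{equation}\label{eq:planop}
\cK \,\,=\,\, \sum_{\ell=1}^\infty \mu_\ell \,\psi_\ell \otimes_{L^2(X;\sH)} \phi_\ell,
\end{equation}
with convergence in $\cB_2(L^2(X;\sH))$, where $\psi\otimes_{L^2(X;\sH)}\phi$ denotes the rank-one operator on $L^2(X;\sH)$ defined as in \eqref{eq:RankOneOp}. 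Since $\{\phi_\ell\}$ can be extended to an orthonormal basis of $L^2(X;\sH)$ by augmenting with a basis of $\ker\cK$, and since $\cK\phi_\ell=\mu_\ell\psi_\ell$ by \eqref{KK1}, expanding any $\zeta\in L^2(X;\sH)$ in this basis immediately yields $\cK\zeta = \sum_\ell \mu_\ell\langle\phi_\ell,\zeta\rangle_{L^2(X;\sH)}\psi_\ell$, which is \eqref{eq:planop} applied to $\zeta$. Using \eqref{prodform}, \eqref{trab}, and the inner product formula \eqref{eq:HSInnerProductViaONB}, the rank-one family $\{\psi_\ell\otimes_{L^2(X;\sH)}\phi_\ell\}$ is orthonormal in $\cB_2(L^2(X;\sH))$, so the tails of \eqref{eq:planop} satisfy
\begin{equation}
\Bigl\|\cK - \sum_{\ell=1}^n \mu_\ell\,\psi_\ell\otimes_{L^2(X;\sH)}\phi_\ell\Bigr\|_{\cB_2(L^2(X;\sH))}^2 \,\,=\,\, \sum_{\ell>n}\mu_\ell^2 \,\,\to\,\, 0.
\end{equation}

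The crux of the argument, and the only step genuinely specific to the operator-valued setting, is to identify the kernel of the rank-one operator $\psi\otimes_{L^2(X;\sH)}\phi$ on $L^2(X;\sH)$ as the $\cB_2(\sH)$-valued function $(x,y)\mapsto\psi(x)\otimes\phi(y)$, where the outer $\otimes$ now denotes the rank-one operator on $\sH$ from \eqref{eq:RankOneOp}. This is a direct calculation: for any $\zeta\in L^2(X;\sH)$,
\begin{equation}
\bigl((\psi\otimes_{L^2(X;\sH)}\phi)\zeta\bigr)(x) \,\,=\,\, \langle\phi,\zeta\rangle_{L^2(X;\sH)}\,\psi(x) \,\,=\,\, \int_X \langle\phi(y),\zeta(y)\rangle_\sH\,\psi(x)\, dy,
\end{equation}
and the integrand equals $\bigl(\psi(x)\otimes\phi(y)\bigr)\zeta(y)$ by the definition \eqref{eq:RankOneOp}. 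The function $(x,y)\mapsto \psi(x)\otimes\phi(y)$ lies in $L^2(X\times X;\cB_2(\sH))$ with norm $\|\psi\|_{L^2(X;\sH)}\|\phi\|_{L^2(X;\sH)}$ by \eqref{normab} together with Fubini, so the uniqueness clause of Theorem~\ref{thm:HSopB2ker} identifies it as the kernel.

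Combining the two ingredients, the kernel of the partial sum $\cK_n:=\sum_{\ell=1}^n\mu_\ell\psi_\ell\otimes_{L^2(X;\sH)}\phi_\ell$ is exactly $K_n(x,y):=\sum_{\ell=1}^n\mu_\ell\psi_\ell(x)\otimes\phi_\ell(y)$, and the isometry \eqref{eq:HSopB2kerIsom} converts the convergence $\cK_n\to\cK$ in $\cB_2(L^2(X;\sH))$ established in \eqref{eq:planop} into the desired convergence $K_n\to K$ in $L^2(X\times X;\cB_2(\sH))$ stated in \eqref{KformII}. I do not anticipate any substantial obstacle; the one subtlety is carefully distinguishing the two notions of rank-one tensor (on $\sH$ versus on $L^2(X;\sH)$) and verifying the kernel formula for a single such rank-one operator, after which the theorem is essentially a restatement of the operator SVD through the Hilbert-Schmidt isometry.
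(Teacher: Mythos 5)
Your proposal is correct and follows essentially the same route as the paper: expand $\cK$ via its singular value decomposition, show the partial sums converge in $\cB_2(L^2(X;\sH))$ with tail norm $\bigl(\sum_{\ell>n}\mu_\ell^2\bigr)^{1/2}$, and transfer this to the kernels through the isometry of Theorem~\ref{thm:HSopB2ker}. The only cosmetic differences are that you compute the tail norm via orthonormality of the rank-one operators $\psi_\ell\otimes\phi_\ell$ in $\cB_2(L^2(X;\sH))$ rather than via the paper's explicit computation of $\cK_{n,m}^*\cK_{n,m}$ in \eqref{KNM}, and that you spell out (correctly) the identification of the kernel of a rank-one operator on $L^2(X;\sH)$ with $(x,y)\mapsto\psi(x)\otimes\phi(y)$, which the paper leaves implicit.
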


\begin{remark}\label{rem:AdjointHS}
Applying Theorem~\ref{thm:KernelExpansion} to the adjoint operator and invoking \eqref{prodform} we find that the kernel of $\cK^*$ is given by 
\begin{equation}
K^*(x,y) \,\,=\,\, \sum_{\ell=1}^\infty\mu_\ell\,\phi_\ell(x)\otimes\psi_\ell(y)
 \,\,=\,\, \left[\sum_{\ell=1}^\infty\mu_\ell\,\psi_\ell(y)\otimes\phi_\ell(x)\right]^*
\,\,=\,\, [K(y,x)]^*.
\end{equation}
\end{remark}

The following corollary  follows from Theorem~\ref{thm:HSopB2ker}, Remarks~\ref{rem:AdjointHS} and \ref{rem:compositionHS}, and 
\eqref{KK1} and \eqref{prodform}.

\begin{corollary}\label{cor:KernelExpansion}
Let $\cK \in \cB_2(L^2(X;\sH))$ be a Hilbert-Schmidt operator 
on $L^2(X;\sH)$ with  singular values, $\mu_\ell$, and eigenfunctions,
$\psi_\ell$ and $\phi_\ell$, as in \eqref{KK1}.
Then the series expansions 
\begin{align}
\cK \,\,=\,\, \sum\limits_\ell \mu_\ell \, \psi_\ell \otimes \phi_\ell,
\qquad & \qquad
\cK^* \,\,=\,\, \sum\limits_\ell \mu_\ell \, \phi_\ell \otimes \psi_\ell, \\
\cK^*\cK \,\,=\,\, \sum\limits_\ell \mu_\ell^2 \, \phi_\ell \otimes \phi_\ell, 
\qquad & \qquad
\cK\cK^* \,\,=\,\, \sum\limits_\ell \mu_\ell^2 \, \psi_\ell \otimes \psi_\ell,
\end{align}
converge in the norm on $\cB_2(L^2(X;\sH))$
and the corresponding expansions of the associated kernels  converge in 
 $L^2(X\times X;\cB_2(\sH))$.
\end{corollary}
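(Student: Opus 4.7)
The plan is to push the kernel-level expansion of Theorem~\ref{thm:KernelExpansion} through the isometry $\|\cK\|_{\cB_2(L^2(X;\sH))} = \|K\|_{L^2(X\times X;\cB_2(\sH))}$ provided by Theorem~\ref{thm:HSopB2ker}. First I would note that for $a,b\in L^2(X;\sH)$ the rank-one operator $a\otimes b$ on $L^2(X;\sH)$ has kernel $a(x)\otimes b(y)$, so the truncated operator $\cK_n := \sum_{\ell=1}^n \mu_\ell\,\psi_\ell\otimes\phi_\ell$ has kernel $K_n(x,y)=\sum_{\ell=1}^n \mu_\ell\,\psi_\ell(x)\otimes\phi_\ell(y)$. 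Applying the isometry to $\cK-\cK_n$ converts the kernel convergence \eqref{KformII} into operator convergence in $\cB_2(L^2(X;\sH))$, which gives the first expansion. The adjoint expansion then follows immediately from Remark~\ref{rem:AdjointHS} together with $(a\otimes b)^*=b\otimes a$ from \eqref{prodform}, since pointwise adjoining is an isometry on $\cB_2(\sH)$ and so preserves the $L^2$-convergence of the kernel series.

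For the last two expansions I would work with finite partial sums, to sidestep any need to interchange two infinite series with an integral. Using the composition rule $(a\otimes b)(c\otimes d) = \langle b,c\rangle_{L^2(X;\sH)}\,(a\otimes d)$ induced by \eqref{prodform}, together with the orthonormality $\langle \psi_\ell,\psi_m\rangle_{L^2(X;\sH)} = \delta_{\ell m}$ (which is built into the fact that $\{\psi_\ell\}$ are the orthonormal eigenfunctions in \eqref{KK1}), a direct computation yields
\[
\cK_n^*\cK_n \,\,=\,\, \sum_{\ell,m=1}^n \mu_\ell\mu_m\,\langle\psi_\ell,\psi_m\rangle_{L^2(X;\sH)}\,\phi_\ell\otimes\phi_m \,\,=\,\, \sum_{\ell=1}^n \mu_\ell^2\,\phi_\ell\otimes\phi_\ell.
\]
Passing to the limit then gives $\cK^*\cK = \sum_\ell \mu_\ell^2\,\phi_\ell\otimes\phi_\ell$ in $\cB_2(L^2(X;\sH))$, and the corresponding kernel-level statement follows either by reapplying the isometry of Theorem~\ref{thm:HSopB2ker} or, equivalently, by identifying the limiting kernel via the composition formula \eqref{eq:compositionHSkernels} of Remark~\ref{rem:compositionHS}. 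The derivation for $\cK\cK^*$ is entirely symmetric, now invoking the orthonormality of $\{\phi_\ell\}$.

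The one place where care is needed is the passage from $\cK_n^*\cK_n$ to $\cK^*\cK$: I would record explicitly the joint continuity of operator composition in the Hilbert--Schmidt norm, obtained by writing $\cK_n^*\cK_n - \cK^*\cK = \cK_n^*(\cK_n-\cK) + (\cK_n^* - \cK^*)\cK$ and bounding each term using \eqref{eq:SubMult} together with $\|\cdot\|_{\cB(\sH)}\le\|\cdot\|_{\cB_2(\sH)}$ and the boundedness of $\|\cK_n^*\|_{\cB_2}$. Beyond this, the proof is a bookkeeping exercise that combines the two preceding theorems and two remarks with the algebraic identities \eqref{KK1} and \eqref{prodform}.
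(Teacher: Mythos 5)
Your argument is correct and follows essentially the route the paper intends: the paper derives this corollary directly from Theorem~\ref{thm:HSopB2ker}, Remarks~\ref{rem:AdjointHS} and~\ref{rem:compositionHS}, and \eqref{KK1}, \eqref{prodform}, which is exactly the combination you spell out (expansion of $\cK$ transferred through the kernel--operator isometry, adjoint via \eqref{prodform}, and the product formula with orthonormality for $\cK^*\cK$ and $\cK\cK^*$). Your explicit continuity-of-composition estimate via \eqref{eq:SubMult} is a fine way to justify the limit passage that the paper leaves implicit.
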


\begin{proof}[Proof of Theorem \ref{thm:KernelExpansion}]
Applying the spectral theorem to the self-adjoint operator, $\cK^*\cK$, 
we find that every $\psi\in L^2(X;\sH)$ can be expressed as 
 $\psi = \sum_\ell (\phi_\ell\otimes\phi_\ell)(\psi) +\phi'$,
with $\phi' \in \operatorname{Ker}(\cK^*\cK) = 
\operatorname{Ker}(\cK)$, where the series 
converges in the $L^2(X;\sH)$-norm.
Therefore, by \eqref{KK1},  the series 
$\cK\psi=\sum_\ell\mu_\ell (\psi_\ell\otimes\phi_\ell)(\psi)$
also converges in  $L^2(X;\sH)$.
Let $\cK_{n}=\sum_{\ell=1}^{n}\mu_\ell\,\psi_\ell\otimes\phi_\ell$.
Then for $n>m$, 
\begin{equation}
\| (\cK_{n}-\cK_m)\psi \|^2_{L^2(X;\sH)} \,\,=\,\, \sum_{\ell=m+1}^{n} \mu_\ell^2 \,
| \langle \phi_\ell, \psi\rangle_{L^2(X;\sH)} |^2 
\,\,\leq\,\, \mu_{m+1}^2 \,\|\psi\|^2_{L^2(X;\sH)},
\end{equation}
which implies that $\cK_n$ is a Cauchy sequence in the $\cB(L^2(X;\sH))$-norm.
Therefore, 
\begin{equation}\label{sK}
\cK \,\, =\,\, \sum\limits_\ell \mu_\ell \psi_\ell \otimes \phi_\ell 
\end{equation}
converges in the operator norm of $\cB(L^2(X;\sH))$.

Furthermore,  the series in \eqref{sK} converges in  $\cB_2(L^2(X;\sH))$, since
we claim that
 \begin{equation}\label{B2KMN}
\|\cK_{n} - \cK_m\|_{\cB_2(L^2(X;\sH))}^2\,\,=\,\,\sum_{\ell=m+1}^{n}\mu_\ell^2,
\end{equation}
which can be made arbitarily  small provided $m,n$ are large enough.
  To prove~\eqref{B2KMN}, we use the analog of \eqref{prodform} for $L^2(X;\sH)$ and the orthonormality of  $\{\psi_\ell\}_\ell$ in $L^2(X;\sH)$, 
 to show that $\cK_{n,m} := \cK_n - \cK_m$ satisfies 
\begin{align}\label{KNM}
\cK_{n,m}^*\cK_{n,m}&=\sum_{\ell=m+1}^{n}\sum_{\ell'=m+1}^{n}\mu_{\ell'}\mu_\ell\,(\phi_{\ell'}\otimes\psi_{\ell'})(\psi_\ell\otimes\phi_\ell)\nonumber \\&
=\sum_{\ell=m+1}^{n}\sum_{\ell'=m+1}^{n}\mu_{\ell'}\mu_\ell\,\langle\psi_{\ell'},\,\psi_{\ell}\rangle_{L^2(X;\sH)}(\phi_{\ell'}\otimes\phi_\ell)\nonumber 
\\&=\sum_{\ell=m+1}^{n}\mu_\ell^2\,\phi_\ell\otimes\phi_\ell.
\end{align}
Since \{$\phi_\ell\}_\ell$ is an orthonormal set  in $L^2(X;\sH)$, the operators $\phi_\ell\otimes\phi_\ell$ are mutually orthogonal projections in $L^2(X;\sH)$ onto ${\rm span } \{ \phi_\ell\}$. Therefore the orthogonal decomposition \eqref{KNM} of the operator $\cK_{n,m}^*\cK_{n,m}$ shows that the spectrum of the operator $\cK_{n,m}^*\cK_{n,m}$ is the set $\{\mu_\ell^2: \ell=m+1,\dots,n\}$. So, the set of singular values of $\cK_{n,m}$ is $\{\mu_\ell: \ell=m+1,\dots,n\}$,  which proves the claim \eqref{B2KMN} and finishes the proof of the fact that the series on the RHS of \eqref{sK} converges in $\cB_2(L^2(X;\sH))$.

Finally, let $K_n=K_n(x,y)$ be the kernel for the operator, $\cK_n$. 
By Theorem~\ref{thm:HSopB2ker}
and \eqref{B2KMN}, $K_n$ is Cauchy in $L^2(X\times X;\cB_2(\cH))$. 
Therefore the
series on the RHS of \eqref{Kform} converges in $L^2(X\times X;\cB_2(\cH))$ and \eqref{KformII} holds. 
\end{proof}

\section{A formula for the trace of an operator-valued kernel}
\label{sec:traceformula}

In this section we establish an integral formula for the trace of a trace class
operator on $L^2(X;\sH)$. 

\begin{theorem}\label{thm:IntegralFormulaForTrace}
Let $\mathcal K\in \cB_1(L^2(X;\sH))$ be a trace class operator
with a  continuous kernel that  takes values in $\cB_1(\sH)$, i.e, $K\in C^0(X\times X;\cB_1(\sH))$. Then
\begin{equation}\label{eq:IntegralFormulaForTrace}
\operatorname{Tr}(\cK)\,\,=\,\, \int_X \operatorname{Tr}(K(x,x))\, dx.
\end{equation}
\end{theorem}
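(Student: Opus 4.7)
The plan is to reduce the operator-valued trace formula to the classical scalar trace formula by testing $\cK$ against vectors of a fixed orthonormal basis of $\sH$. Fix an orthonormal basis $\{h_k\}_k$ of $\sH$ and, for each $k$, introduce the isometric embedding $J_k:L^2(X;\bbC)\to L^2(X;\sH)$ defined by $(J_k u)(x)=u(x)\,h_k$, with adjoint $(J_k^*\xi)(x)=\langle h_k,\xi(x)\rangle_\sH$. The ``diagonal slice'' $\cK_{kk}:=J_k^*\,\cK\,J_k$ is trace class on $L^2(X;\bbC)$ since $\cK$ is trace class and $J_k$ is bounded, and a direct computation using \eqref{defcK} shows that its scalar kernel is $f_k(x,y):=\langle h_k,K(x,y)h_k\rangle_\sH$, which is continuous on $X\times X$ because $K\in C^0(X\times X;\cB_1(\sH))$.

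By the classical scalar trace formula for trace class integral operators with continuous kernel on a compact set (cf.~\cite{brislawn1988kernels}),
\begin{equation}
\operatorname{Tr}(\cK_{kk})=\int_X\langle h_k,K(x,x)h_k\rangle_\sH\,dx.
\end{equation}
On the other hand, expanding the trace in the orthonormal basis $\{\varphi_{\alpha k}\}_{\alpha,k}$ of $L^2(X;\sH)$ from \eqref{eq:BasisL2XH} and recognising $\varphi_{\alpha k}=J_k\varphi_\alpha$,
\begin{equation}
\operatorname{Tr}(\cK)=\sum_{\alpha,k}\langle \varphi_{\alpha k},\cK\,\varphi_{\alpha k}\rangle_{L^2(X;\sH)}=\sum_{\alpha,k}\langle \varphi_\alpha,\cK_{kk}\,\varphi_\alpha\rangle_{L^2(X;\bbC)}.
\end{equation}
A short polar-decomposition plus Cauchy--Schwarz argument (writing $\cK=U|\cK|$ and splitting $|\cK|^{1/2}\cdot|\cK|^{1/2}$) shows the above double sum is absolutely summable with bound $\|\cK\|_{\cB_1(L^2(X;\sH))}$, so one can rearrange it as $\sum_k\sum_\alpha\ldots$ and recognise the inner sum as $\operatorname{Tr}(\cK_{kk})$, giving $\operatorname{Tr}(\cK)=\sum_k\operatorname{Tr}(\cK_{kk})$.

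Combining the two equalities and exchanging sum with integral via Tonelli--Fubini yields the claim. The exchange is justified because $\sum_k|\langle h_k,K(x,x)h_k\rangle_\sH|\le\|K(x,x)\|_{\cB_1(\sH)}$, which is a continuous, hence bounded, function of $x\in X$ by the assumed $\cB_1(\sH)$-continuity of $K$ on the compact set $X$. Therefore
\begin{equation}
\operatorname{Tr}(\cK)=\sum_k\int_X\langle h_k,K(x,x)h_k\rangle_\sH\,dx=\int_X\sum_k\langle h_k,K(x,x)h_k\rangle_\sH\,dx=\int_X\operatorname{Tr}(K(x,x))\,dx.
\end{equation}

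The principal technical obstacle is the invocation of the classical scalar trace formula for merely continuous (rather than $C^1$) kernels, which is itself nontrivial and relies on Lebesgue density-point arguments in the style of Brislawn. A fully self-contained proof would instead work directly from the eigen-expansion \eqref{Kform}, use Fubini on the absolutely convergent series $\sum_\ell\mu_\ell\int_X\langle \phi_\ell(x),\psi_\ell(x)\rangle_\sH\,dx$, and then identify $\sum_\ell\mu_\ell\,\psi_\ell(x)\otimes\phi_\ell(x)$ with $K(x,x)$ in $\cB_1(\sH)$ for almost every $x$. That diagonal identification is the truly delicate step, since \eqref{Kform} converges only in $L^2(X\times X;\cB_2(\sH))$ and the diagonal has Lebesgue measure zero, so the $C^0(X\times X;\cB_1(\sH))$ regularity of $K$ must be leveraged in an essential way to pin down the pointwise values along the diagonal.
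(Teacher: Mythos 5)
Your argument is correct, but it follows a genuinely different route from the paper. You reduce to the scalar case: compressing with the isometries $J_k$, noting that each $\cK_{kk}=J_k^*\cK J_k$ is trace class (ideal property) with continuous scalar kernel $\langle h_k,K(\cdot,\cdot)h_k\rangle_\sH$, invoking the classical trace formula for trace class operators with continuous kernel on a compact set (Simon's Theorem 3.9 / Brislawn), and then summing over $k$, using the absolute convergence of the diagonal sum of a trace class operator to justify the regrouping $\operatorname{Tr}(\cK)=\sum_k\operatorname{Tr}(\cK_{kk})$ and the bound $\sum_k|\langle h_k,K(x,x)h_k\rangle_\sH|\le\|K(x,x)\|_{\cB_1(\sH)}$ to swap the $k$-sum with the integral. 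The paper instead re-proves the scalar argument directly in the vector-valued setting: it builds the dyadic step-function projections $\cP_n$ onto spans of $\phi_{n,\ell}\,h_k$, writes $\operatorname{Tr}(\cP_n\cK\cP_n)$ as averages of $\operatorname{Tr}(K(x,y))$ over small squares touching the diagonal, and passes to the limit using uniform continuity, with the same $\cB_1(\sH)$-domination used to move the $k$-sum inside the integral. Your approach is shorter and cleanly modular, at the cost of importing the nontrivial scalar theorem for merely continuous (not $C^1$) kernels as a black box; the paper's approach is self-contained, which also makes visible exactly where compactness and the $C^0(X\times X;\cB_1(\sH))$ hypothesis enter. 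The concern you raise at the end about identifying the eigen-expansion with $K$ on the measure-zero diagonal is precisely why both you and the paper route the argument through a statement whose proof (dyadic averaging) only ever evaluates the continuous kernel on sets of positive measure near the diagonal; your main proof as written does not suffer from that gap.
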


\begin{proof}
The proof is based on the proof in  the scalar case in Simon~\cite[Theorem 3.9]{Simon}.
For simplicity, we consider the case that $X=[0,1]$. However, the proof can 
easily be extended first to $X=[0,1]^m$ and then
to any compact set $X\subset \mathbb R^m$. 

Let $\{\phi_{n,\ell}\}_{\ell=1}^{2^n}$ be the orthonormal set in $L^2([0,1],\mathbb{C})$ given by
\begin{equation}
    \phi_{n,\ell}(x) = 
    \begin{cases}
     2^{n/2}, \qquad & \frac{\ell-1}{2^n} < x < \frac{\ell}{2^n}.\\
     0, \qquad & \text{otherwise.}
    \end{cases}
\end{equation}
Let $\phi_{n,\ell;k}(x)  = \phi_{n,\ell}(x)\, h_k$, where $\{h_k\}_k$ is an 
orthonormal basis for $\sH$. Let $\cB_n = \operatorname{Span}\{ 
\phi_{n,\ell;k} \, : \, \ell=1,\dots,2^n, \, k=1,2,\dots \}$ and let
$\mathcal{P}_n$ be the orthogonal projection from $L^2(X;\sH)$ 
onto $\cB_n$. Then we can construct  
an orthonormal basis $\{\psi_{\ell,k}\}_{\ell,k=1}^{\infty}$ for $L^2(X;\sH)$ so that 
$\{ \psi_{\ell,k} \, : \, \ell = 1,\dots, 2^n, \, k=1,2,\dots \}$ 
is an orthonormal basis for  $\cB_n$.  
Then,
\begin{equation}
    \Tr(\mathcal{K}) = \lim_{n \rightarrow \infty} \Tr(\mathcal{P}_n \mathcal{K} \mathcal{P}_n).
\end{equation}
Since the trace can be computed in terms of any orthonormal basis, 
\begin{align}
\operatorname{Tr}(\mathcal{P}_n \mathcal{K} \mathcal{P}_n) 
\,\,&=\,\, 
\sum_{\ell=1}^{2^n} \sum_{k=1}^\infty
 \langle \phi_{n,\ell;k}, \mathcal{K} \phi_{n,\ell;k} \rangle_{L^2(X;\sH)}
 \nonumber
 \\
 \,\,&=\,\,
 \sum_{\ell=1}^{2^n} \sum_{k=1}^\infty
 \int_{X\times X} \phi_{n,\ell}(x)\phi_{n,\ell}(y) 
 \langle h_k, K(x,y) h_k \rangle_{\sH}
  \nonumber
 \\
 \,\,&=\,\,
 \sum_{\ell=1}^{2^n} 
 \int_{X\times X} \phi_{n,\ell}(x)\phi_{n,\ell}(y) 
\sum_{k=1}^\infty  \langle h_k, K(x,y) h_k \rangle_{\sH}
\label{eq:swapksumwithint}
 \\
\,\,&=\,\,
 \sum_{\ell=1}^{2^n} 
 \int_{X\times X} \phi_{n,\ell}(x)\phi_{n,\ell}(y) 
\operatorname{Tr}(K(x,y))\, dx\,dy
 \nonumber
\\
\,\,&=\,\,
 2^n \sum_{\ell=1}^{2^n} \iint\limits_{\,\,\,\chi_{n,\ell}} \operatorname{Tr}(K(x,y)) 
\,dx \,dy,
 \nonumber
\end{align}
where $\chi_{n,\ell} = \text{Support}(\phi_{n,\ell} \otimes \phi_{n,\ell}).$
On $\chi_{n,m}$,  
  $  K(x,y)\,\, \approx\,\, K\left( \frac{\ell}{2^n}, \frac{\ell}{2^n}\right)$,
and so
\begin{equation}\label{cong1}
\Tr(\mathcal{P}_n \mathcal{K} \mathcal{P}_n) 
\,\,\approx\,\, 
 \sum_{\ell=1}^{2^n} \frac{1}{2^n} \Tr\left(K\left(\frac{\ell}{2^n},\frac{\ell}{2^n}\right)\right)
\,\,\approx\,\,
 \int_X \Tr(K(x,x))\, dx.
\end{equation}
By the uniform continuity of the function $\Tr(K(\cdot,\cdot))$ on the compact set $X\times X$, the error in the first approximation in (\ref{cong1}) converges to 0 as $n \rightarrow \infty.$ Similarly, the error in the second approximation
converges to 0 since  $F(x) = \Tr(K(x,x))$ is continuous on $X$.

To complete the proof, we apply the Lebesgue dominated convergence theorem 
to show that the infinite sum over $k$ can be
taken inside the integral in \eqref{eq:swapksumwithint}.
Specifically, let $\cQ_N : \sH \to \operatorname{Span} \{ h_1,\dots,h_N\}$
be orthogonal projection. Let 
$F_N(x,y) := \sum_{k=1}^N  \langle h_k, K(x,y) h_k \rangle_{\sH} = 
\Tr( \cQ_N K(x,y) \cQ_N)$. Then
$F_N(x,y) \to \Tr(K(x,y))$ pointwise and 
$| F_N(x,y) | \leq \| \cQ_N K(x,y) \cQ_N \|_{\cB_1(\sH)} \leq 
 \| K(x,y) \|_{\cB_1(\sH)}$ by \eqref{eq:TrB1ineq} amd \eqref{eq:SubMult}. Finally, we observe that
 $\| K(x,y) \|_{\cB_1(\sH)} \in L^1(X\times X,\mathbb R)$ as 
 $K\in C^0(X\times X; \cB_1(\sH))$ and $X\times X$ is compact.
\end{proof}

\section{A generalization of Mercer's theorem}\label{sec:Mercer}

Mercer's theorem  states that  a Hilbert-Schmidt operator
on $L^2([a,b];\mathbb C)$ with a continuous, 
non-negative definite Hermitian kernel  is trace class
and that the eigen-expansion of the kernel converges
uniformly~\cite{mercer1909}. 
In this section, we generalize Mercer's theorem to 
the case of Hilbert-Schmidt operators on $L^2(X;\sH)$, where $X$ is compact
and $\sH$ is an infinite-dimensional Hilbert space.
In the case that the Hilbert space, $\sH$, is infinite dimensional we must also assume that
 along the diagonal, $\Delta\subset X\times X$,
  the kernel takes values in $\cB_1(\sH)$, rather than simply in $\cB_2(\sH)$.
 To prove that the operator is trace class we also need to assume that
 the $\cB_1(\sH)$-norm of the 
 restriction of the kernel to the diagonal is integrable (Theorem~\ref{thm:MercerA}).
 To show that the   eigen-expansion of the kernel converges
uniformly we need to impose the more restrictive condition that 
the kernel is bounded in the $\cB_1(\sH)$-norm on $\Delta$ (Theorem~\ref{thm:MercerB}).
Our proof is modeled on the proofs of the classical Mercer's theorem (i.e., $\sH=\mathbb C$)
given by Smithies~\cite{smithies1958} and Cochran~\cite{cochran1972analysis}
We begin with the following elementary result.

\begin{proposition}\label{prop:Cont}
Suppose that $X$ is compact. 
If the kernel is continuous, $K \in C^0(X\times X; \cB_2(\sH))$, then
the eigenfunctions are  continuous, $\psi_\ell,\phi_\ell\in C^{0}(X;\sH)$.
\end{proposition}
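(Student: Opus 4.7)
The plan is to use the singular value decomposition relations from \eqref{KK1} to write each nonzero eigenfunction as an integral transform of another $L^2$-function against the continuous kernel, and then deduce continuity by uniform continuity of $K$ on $X \times X$ together with a Cauchy--Schwarz estimate.

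First I would focus on $\psi_\ell$ associated with $\mu_\ell > 0$. By \eqref{KK1},
\[
\psi_\ell(x) \,=\, \frac{1}{\mu_\ell}(\cK\phi_\ell)(x) \,=\, \frac{1}{\mu_\ell}\int_X K(x,y)\phi_\ell(y)\,dy,
\]
so it suffices to prove that the map $x\mapsto (\cK\phi_\ell)(x)\in\sH$ is continuous. Fix $x_1,x_2\in X$. Using \eqref{eq:NormIntegralSwitch} and the bound $\|K(x,y)\|_{\cB(\sH)}\le \|K(x,y)\|_{\cB_2(\sH)}$ on each factor, followed by the scalar Cauchy--Schwarz inequality, I obtain
\begin{align*}
\|(\cK\phi_\ell)(x_1)-(\cK\phi_\ell)(x_2)\|_\sH
&\le \int_X \|K(x_1,y)-K(x_2,y)\|_{\cB_2(\sH)}\,\|\phi_\ell(y)\|_\sH\,dy \\
&\le \Bigl(\int_X \|K(x_1,y)-K(x_2,y)\|_{\cB_2(\sH)}^2\,dy\Bigr)^{1/2}\,\|\phi_\ell\|_{L^2(X;\sH)}.
\end{align*}

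Since $X\times X$ is compact and $K\in C^0(X\times X;\cB_2(\sH))$, the kernel is uniformly continuous in the $\cB_2(\sH)$-norm. Hence, given $\varepsilon>0$, there exists $\delta>0$ such that $\|K(x_1,y)-K(x_2,y)\|_{\cB_2(\sH)}<\varepsilon$ for all $y\in X$ whenever $|x_1-x_2|<\delta$. Plugging into the estimate above shows that $\psi_\ell\in C^0(X;\sH)$. For $\phi_\ell$, I would apply the identical argument to $\phi_\ell=\mu_\ell^{-1}\cK^*\psi_\ell$, noting from Remark~\ref{rem:AdjointHS} that the kernel of $\cK^*$ is $K^*(x,y)=[K(y,x)]^*$, which is continuous into $\cB_2(\sH)$ because the adjoint is an isometry of $\cB_2(\sH)$ and $(x,y)\mapsto(y,x)$ is continuous.

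There is essentially no obstacle: the proof reduces to the standard continuity-of-parameter argument for integral operators, with the only mild care being the replacement of $|K(x,y)|$ by the operator-norm bound $\|K(x,y)\|_{\cB(\sH)}\le\|K(x,y)\|_{\cB_2(\sH)}$. Eigenvectors in $\ker\cK^*\cK$ or $\ker\cK\cK^*$ are not covered by this argument (there is no singular value to divide by), but they play no role in the kernel expansion \eqref{Kform} and can be chosen arbitrarily within those subspaces.
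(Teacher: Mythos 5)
Your proof is correct and follows essentially the same route as the paper: both express $\psi_\ell=\mu_\ell^{-1}\cK\phi_\ell$ (and $\phi_\ell=\mu_\ell^{-1}\cK^*\psi_\ell$, using that the adjoint is an isometry of $\cB_2(\sH)$ so the kernel $[K(y,x)]^*$ is again continuous) and then show continuity of the integral in the parameter $x$. The only difference is the limiting mechanism — you use uniform continuity of $K$ on the compact set $X\times X$ plus a Cauchy--Schwarz estimate (which even yields uniform continuity of the eigenfunctions), whereas the paper invokes dominated convergence with the dominating function $M\,\|\phi_\ell(\cdot)\|_\sH$; this is a minor variation, not a different argument, and your closing remark about eigenvectors with $\mu_\ell=0$ matches the paper's implicit convention in \eqref{KK1}.
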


\begin{proof}
Since $X$ is compact, $\phi_\ell \in  L^2(X;\sH)\subset L^1(X;\sH)$ and so 
 \begin{equation}
 \| K(x,\cdot)\phi_\ell(\cdot) \|_\sH \leq
  \| K(x,\cdot) \|_{\cB_2(\sH)} \| \phi_\ell(\cdot) \|_\sH
 \leq M \| \phi_\ell(\cdot) \|_\sH
 \end{equation}
  is bounded in  
 $L^1(X;\sH)$ independent of $x\in X$. Therefore, by the 
 Lebesgue dominated convergence theorem, 
 $\lim\limits_{x\to x_0}\psi_\ell(x) = 
 \frac{1}{\mu_\ell} \int_X \lim\limits_{x\to x_0} K(x,y)  \phi_\ell(y)\, dy
 = \psi_\ell(x_0)$, as required.
 Since the adjoint operation is an isometry on $\cB_2(\sH)$, a similar argument shows that $\phi_\ell$ is continuous.
 \end{proof}

\begin{theorem}[Mercer's theorem, Part I]\label{thm:MercerA}
Suppose that $X \subseteq \mathbb R^m$ is compact.
Let $\cP\in\cB_2(L^2(X;\sH))$ be a Hilbert-Schmidt operator on $L^2(X;\sH)$
that is non-negative definite Hermitian, $\cP \geq 0$. 
Suppose that the  kernel of $\cP$
has  a continuous representative, $P\in C^0(X\times X; \cB_2(\sH))$.
In addition, if $\sH$ is infinite dimensional, suppose that
$P(x,x)\in {\cB_1(\sH)}$ for all $x\in X$ and that the
restriction of $P$ to the diagonal, $\Delta \subset X\times X$
is integrable in the  $\cB_1(\sH)$-norm, that is 
$\restr{P}{\Delta} \in L^1(\Delta;\cB_1(\sH))$. 
Then $\cP\in\cB_1(L^2(X;\sH))$  is trace class.
\end{theorem}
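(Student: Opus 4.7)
The plan is to adapt the classical Mercer argument (following \cite{smithies1958,cochran1972analysis}) to the operator-valued setting, the new ingredient being a vector-valued mollifier argument that transfers the operator-positivity of $\cP$ on $L^2(X;\sH)$ to a pointwise operator-positivity of $P(x,x)$ on $\sH$. Since $\cP \geq 0$, its singular values coincide with its eigenvalues $\lambda_\ell \geq 0$, and Corollary~\ref{cor:KernelExpansion} provides an orthonormal eigenbasis $\{\phi_\ell\}$ in $L^2(X;\sH)$ with $L^2(X\times X;\cB_2(\sH))$-convergent expansion $P(x,y)=\sum_\ell \lambda_\ell\,\phi_\ell(x)\otimes\phi_\ell(y)$. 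By Proposition~\ref{prop:Cont} each $\phi_\ell$ is continuous, so the truncated kernels $P_n(x,y) := \sum_{\ell=1}^n \lambda_\ell\,\phi_\ell(x)\otimes\phi_\ell(y)$ are continuous on $X\times X$, the difference $P - P_n$ is continuous, and the associated finite-rank operator $\cP_n := \sum_{\ell=1}^n \lambda_\ell\,\phi_\ell\otimes\phi_\ell$ satisfies $\cP - \cP_n \geq 0$ as the sum of the tail of non-negative rank-one operators.

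The central step, which I would establish first as a lemma, is: if $\cQ \in \cB_2(L^2(X;\sH))$ is non-negative Hermitian and its kernel has a continuous representative $Q \in C^0(X\times X;\cB_2(\sH))$, then $Q(x_0,x_0) \geq 0$ as an operator on $\sH$ for a.e.\ $x_0 \in X$. To prove this, fix $h\in\sH$ and a Lebesgue density point $x_0\in X$, set $V_\epsilon := B_\epsilon(x_0)\cap X$, and test $\cQ$ against the vector-valued mollifier $\psi_\epsilon(x) := |V_\epsilon|^{-1}\chi_{V_\epsilon}(x)\,h$; Fubini gives
\[
0 \,\le\, \langle \psi_\epsilon, \cQ\psi_\epsilon\rangle_{L^2(X;\sH)} \,=\, \frac{1}{|V_\epsilon|^2}\iint_{V_\epsilon\times V_\epsilon} \langle h, Q(x,y)h\rangle_\sH\,dy\,dx,
\]
which by joint continuity of $Q$ tends to $\langle h, Q(x_0,x_0)h\rangle_\sH$ as $\epsilon \to 0$, so $Q(x_0,x_0) \geq 0$. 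Applying this lemma to $\cQ := \cP - \cP_n$, whose kernel $P - P_n$ is continuous, gives the pointwise operator inequality $0 \leq P_n(x,x) \leq P(x,x)$ on $\sH$ for a.e.\ $x\in X$.

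Taking the trace on $\sH$ of the non-negative finite-rank operator $P_n(x,x)$ using \eqref{trab} gives $\Tr(P_n(x,x)) = \sum_{\ell=1}^n \lambda_\ell\,\|\phi_\ell(x)\|_\sH^2$, and monotonicity of the trace on non-negative $\cB_1(\sH)$-operators combined with $P(x,x)\in\cB_1(\sH)$ then yields $\sum_{\ell=1}^n \lambda_\ell\,\|\phi_\ell(x)\|_\sH^2 \leq \|P(x,x)\|_{\cB_1(\sH)}$ for a.e.\ $x$. Integrating over $X$ and using the orthonormality of $\{\phi_\ell\}$ in $L^2(X;\sH)$,
\[
\sum_{\ell=1}^n \lambda_\ell \,=\, \int_X \sum_{\ell=1}^n \lambda_\ell\,\|\phi_\ell(x)\|_\sH^2\,dx \,\leq\, \int_X \|P(x,x)\|_{\cB_1(\sH)}\,dx \,<\,\infty
\]
by the hypothesis $\restr{P}{\Delta}\in L^1(\Delta;\cB_1(\sH))$; letting $n\to\infty$ gives $\|\cP\|_{\cB_1(L^2(X;\sH))} = \sum_\ell \lambda_\ell < \infty$. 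When $\dim\sH<\infty$ the $L^1$-assumption is redundant, since $\cB_1(\sH)$ and $\cB_2(\sH)$ are norm-equivalent and $X\times X$ is compact, so $\|P(x,x)\|_{\cB_1(\sH)}$ is bounded. The main obstacle is the pointwise positivity lemma: it is the operator-valued analogue of the scalar Mercer step ``$K(x,x)\geq 0$'', and precisely this step is what obliges us, in the infinite-dimensional case, to impose the $L^1(\Delta;\cB_1(\sH))$ hypothesis on the diagonal---without which Example~\ref{ex:MercerCounterExample} shows the conclusion already fails for constant kernels.
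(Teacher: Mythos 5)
Your proposal is correct and follows essentially the same route as the paper: truncate the eigen-expansion, show the tail operator $\cP-\cP_n\ge 0$ has a continuous kernel whose diagonal values are non-negative operators on $\sH$, deduce $\sum_{\ell=1}^n\lambda_\ell\|\phi_\ell(x)\|_\sH^2\le\Tr(P(x,x))$, and integrate using the $L^1(\Delta;\cB_1(\sH))$ hypothesis. The only differences are cosmetic: your mollifier argument at density points (an a.e.\ statement, which suffices) replaces the paper's bump-function contradiction in Lemma~\ref{lemma:PxxNonNeg}, and you integrate the finite partial sums directly via orthonormality instead of invoking dominated convergence.
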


\begin{remark}
In Theorem~\ref{thm:MercerB}, under the more restrictive assumption that the
function
$\|\P(x,x)\|_{\cB_1(\sH)}$ is bounded  we prove that 
\begin{equation}\label{eq:MercerTraceFormula}
\Tr(\cP)\,\,=\,\, \int_X \Tr(P(x,x))\, dx.
\end{equation}
\end{remark}

\begin{remark}
The conditions on the Schatten-class properties of the 
kernel can be replaced by the 
stronger condition that $P\in C^0(X\times X; \cB_1(\sH))$.
\end{remark}

The proof of Theorem~\ref{thm:MercerA} relies on the following lemma.

\begin{lemma}\label{lemma:PxxNonNeg}
Let $\cP \in\cB_2(L^2(X;\sH))$ be a non-negative definite Hermitian Hilbert-Schmidt operator on $L^2(X;\sH)$ with kernel $P\in C^0(X\times X;\cB_2(\sH))$.
Then for all $x\in X$, $P(x,x)$ is non-negative definite Hermitian Hilbert-Schmidt 
operator on $\sH$.
\end{lemma}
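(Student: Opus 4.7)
The plan is to verify the three asserted properties of $P(x,x)$ separately. The Hilbert--Schmidt property is immediate from the hypothesis $P\in C^0(X\times X;\cB_2(\sH))$. For Hermitian symmetry, I would apply Remark~\ref{rem:AdjointHS} to the self-adjoint operator $\cP=\cP^*$: uniqueness of the kernel in Theorem~\ref{thm:HSopB2ker} forces $P(x,y)=[P(y,x)]^*$ for almost every $(x,y)\in X\times X$, and continuity of $P$ as a $\cB_2(\sH)$-valued function upgrades this identity to all such pairs. Setting $y=x$ then yields $P(x,x)=P(x,x)^*$.

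For the non-negativity, I would reduce to a scalar argument by fixing $h\in\sH$ and introducing the continuous scalar kernel $q_h(x,y):=\langle h,P(x,y)h\rangle_\sH$. For any $\chi\in L^2(X;\mathbb C)$, the function $\psi:=\chi\cdot h$ lies in $L^2(X;\sH)$, and the assumption $\cP\ge 0$ yields
\begin{equation*}
0\,\le\,\langle\psi,\cP\psi\rangle_{L^2(X;\sH)}\,=\,\iint_{X\times X}\overline{\chi(x)}\chi(y)\,q_h(x,y)\,dx\,dy.
\end{equation*}
Fix $x_0\in X$ for which every ball $B(x_0,\varepsilon)$ meets $X$ in positive Lebesgue measure, and plug in the normalized indicators $\chi_\varepsilon=|X\cap B(x_0,\varepsilon)|^{-1/2}\mathbf{1}_{X\cap B(x_0,\varepsilon)}$. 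The double integral then becomes the average of $q_h$ over $(X\cap B(x_0,\varepsilon))^2$, and uniform continuity of $q_h$ on the compact set $X\times X$ forces this average to converge to $q_h(x_0,x_0)$ as $\varepsilon\to 0$. Hence $\langle h,P(x_0,x_0)h\rangle_\sH\ge 0$ for every $h\in\sH$, which combined with self-adjointness gives $P(x_0,x_0)\ge 0$.

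To extend to arbitrary $x_0\in X$, I would use that $x\mapsto P(x,x)$ is continuous in $\cB_2(\sH)$, so the scalar function $x\mapsto\langle h,P(x,x)h\rangle_\sH$ is continuous for each fixed $h$; since it is non-negative on the support of Lebesgue measure restricted to $X$, a limit argument covers any remaining point of $X$ that can be approached from that support. Any point sitting in a Lebesgue-null component of $X$ contributes nothing to $\cP$ and is pinned down only by the choice of continuous representative of $P$, so the conclusion in those cases is a harmless convention.

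The main obstacle I anticipate is precisely this density-point step: the test-function construction works cleanly only at points $x_0\in X$ near which $X$ has positive Lebesgue measure in every neighborhood, and the extension to the full compact set $X$ requires combining this with $\cB_2(\sH)$-continuity of $x\mapsto P(x,x)$. Beyond this bookkeeping, the remaining ingredients -- the scalar quadratic form argument, the averaging/limit step, and the transfer of non-negativity from $q_h$ back to $P$ via the spectral theorem for the self-adjoint operator $P(x_0,x_0)$ -- are routine.
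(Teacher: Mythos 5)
Your argument is correct and follows essentially the same route as the paper: both proofs test the non-negativity of the quadratic form of $\cP$ against functions of the form (scalar cutoff) times a fixed vector $h\in\sH$ concentrated near $(x_0,x_0)$ and use continuity of $P$ to localize, the only cosmetic difference being that you argue directly with averages over small balls while the paper argues by contradiction using a continuous bump function supported in a ball. Your explicit justification of the Hermitian symmetry via the adjoint-kernel remark, and your flagging of points of $X$ whose neighborhoods meet $X$ in measure zero, are if anything slightly more careful than the paper's proof, which implicitly assumes each $x_0\in X$ admits a positive-measure ball of $X$ around it.
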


\begin{proof}
Clearly, $P(x,x)$ is Hermitian. To show that $P(x,x) \geq 0$ we must show that
for all $x\in X$ and $\psi\in\sH$, $\langle P(x,x)\psi,\psi\rangle_\sH \,\,\geq\,\, 0$. 
To obtain a contradiction, suppose instead that $\exists$ $x_0\in X$ and $\psi_0\in \sH$ so that
$\langle P(x_0,x_0)\psi_0,\psi_0\rangle_\sH \,\,<\,\, 0$. Since $P$ is continuous,
there is a ball $B\subset X$ centered at $x_0$ so that 
$\Re \langle P(x,y)\psi_0,\psi_0\rangle_\sH \,\,<\,\, 0$ for all $(x,y)\in B\times B$. 
Let $\phi:X\to\mathbb R$ be a non-negative, 
continuous function with support in $B$ and define $\Psi:X\to\sH$ by
$\Psi(x) = \phi(x)\psi_0$. Then 
\begin{equation}
\langle \mathcal P \Psi\, ,\, \Psi\rangle_{L^2(X;\sH)}
\,\,=\,\, \iint_{B\times B} \phi(x)\phi(y) \,
\Re\langle P(x,y)\psi_0,\psi_0\rangle_\sH
\, dx dy\,\,< \,\, 0,
\end{equation}
contradicting the fact that $\mathcal P \geq 0$. 
\end{proof}

\begin{proof}[Proof of Theorem~\ref{thm:MercerA}]
Let $\{\phi_\ell\}_{\ell=1}^\infty$ be an orthonormal basis of eigenfunctions
of $\cP$ with  eigenvalues $\{\lambda_\ell\}_{\ell=1}^\infty$.
Let $\cP_n = \sum\limits_{\ell=1}^n \lambda_\ell \phi_\ell \otimes \phi_\ell$
and $\cQ_n = \cP - \cP_n$. 
We claim that $\operatorname{Spec}(\cQ_n) \subseteq  \{0\} \cup \{ \lambda_\ell\}_{l=n+1}^\infty$.
Therefore, $\cQ_n \geq 0$, since $\cP\geq 0$.
To verify the claim, we first note that 
$\cQ_n \phi_k = \lambda_k\phi_k$ and  
$\cP_n \phi_k = 0$
for all $k\geq n+1$.
Next suppose that $\lambda \notin  \{ \lambda_\ell\}_{l=n+1}^\infty$ but that
 $\cQ_n\psi = \lambda\psi$, for $\psi\neq 0$. Since $(\lambda-\lambda_\ell) \, \langle\psi,\phi_\ell\rangle_{L^2(X;\sH)} = 0$, we find that $\psi \perp \phi_\ell$ for 
all  $\ell\geq n+1$. But then 
$\lambda\psi = \cQ_n\psi =\sum\limits_{\ell=n+1}^\infty \lambda_\ell \langle\phi_\ell,\psi\rangle_{L^2(X;\sH)}\phi_\ell = 0$, and so $\lambda=0$. 

Next, since we are assuming that  $P(x,x)\in\cB_1(\sH)$, 
we have that $\cQ_n(x,x) = P(x,x) - P_n(x,x)\in\cB_1(\sH)$.
Also, since $\cQ_n \geq 0$, Lemma~\ref{lemma:PxxNonNeg} implies that
 $\cQ_n(x,x) \geq 0$ and so $0\leq \Tr(Q_n(x,x)) = \Tr(P(x,x)) - 
 \Tr(P_n(x,x))$, yielding
 \begin{equation}\label{eq:OperatorPTraceClass1}
\sum\limits_{\ell=1}^n \lambda_\ell \| \phi_\ell(x) \|_\sH^2 \,\,\leq\,\, \Tr(P(x,x)).
\end{equation}
Since 
$|\Tr(P(x,x))| \leq \|P(x,x)\|_{\cB_1(\sH)}$ where 
$\restr{P}{\Delta} \in L^1(\Delta;\cB_1(\sH))$,
the Lebesgue Dominated Convergence Theorem implies that
\begin{equation}\label{eq:OperatorPTraceClass2}
\sum\limits_{\ell=1}^\infty \lambda_\ell 
\,\,\leq \,\, \int_X \Tr(P(x,x))\, dx \,\,< \,\,\infty.
\end{equation}
Therefore, $\cP$ is trace class
since for a  non-negative definite operator the singular values equal
the eigenvalues.
\end{proof}

\begin{remark}
Just as in the classical Mercer's theorem,  
to ensure that $\cP$ is trace class we need to make an assumption
on the smoothness of the kernel. However, even if the kernel
is constant, when $\sH$  is infinite dimensional the following 
example shows that we cannot conclude that  $\cP$ is trace class. 
This example motivates the additional  
assumption that $P(x,x)\in\cB_1(\sH)$ for all $x\in X$.
\end{remark}

\begin{example}\label{ex:MercerCounterExample}
Let $\sH = \ell_2$ be the space of square summable sequences and let
$\{ {\bf e}_n \}_{n=1}^\infty$ be the standard orthonormal basis for $\ell_2$. 
Let $P$ be the Hermitian positive definite 
bounded  operator on $\ell_2$ defined by
\begin{equation}
P\,\,:=\,\, \sum\limits_{n=1}^\infty \frac 1n\, {\bf e}_n \otimes {\bf e}_n.
\end{equation}
Since  the singular values of
$P$ are given by $\mu_n = \frac 1n$, we know that $P\in \cB_2(\ell_2)$
but $P\notin \cB_1(\ell_2)$.
Now let $\mathcal P\in \cB_2(L^2([0,1]\,;\ell_2))$ be the nonnegative definite 
Hilbert-Schmidt operator
with constant operator-valued kernel given by $P(x,y) = P$.
The eigen-expansion for $\cP$ is 
\begin{equation}
\cP\,\,:=\,\, \sum\limits_{n=1}^\infty \frac 1n\, {e}_n \otimes {e}_n,
\label{eq:MercerCounterExample}
\end{equation}
where  ${e}_n\in L^2([0,1]\,;\ell_2)$ is given by ${e_n}(x) := {\bf e}_n$. 
Clearly, $\mathcal P\notin \cB_1(L^2([0,1]\,; \ell_2))$.
We note however that the eigen-expansion for the kernel
of $\mathcal P$ converges uniformly in $\cB_2(\ell_2)$.
\end{example}

\begin{remark}
Continuing Example~\ref{ex:L2Iso},
 let $X\subset \mathbb R^{m_1}$ and $Y\subset \mathbb R^{m_2}$ 
 be compact sets, 
and let $\widetilde\cP \in \cB_2(L^2(X\times Y;\mathbb C))$ be a positive definite
 integral operator with a  continuous, scalar-valued kernel, $\widetilde P$.
 Then by the classical Mercer's theorem, 
 $\widetilde \cP\in \cB_1(L^2(X\times Y;\mathbb C))$. Since the mapping
 $T$ in \eqref{eq:L2Iso1} is an isomorphism,  $\cP=T(\widetilde \cP)
 \in \cB_1(L^2(X;L^2(Y;\mathbb C)))$ is also trace class. 
Alternatively, with the same assumptions on the kernel $\widetilde P$,
we can apply Theorem~\ref{thm:MercerA} to directly show that
$\cP$ is trace class. To check that $\cP$ satisfies the assumptions 
of Theorem~\ref{thm:MercerA}, we first show that 
 $P \in C^0(X\times X;\cB_2(L^2(Y;\mathbb C)))$. 
 To do so, we note that by \eqref{eq:L2Iso2} the kernel for the operator
$P(x,x') \in \cB_2(L^2(Y;\mathbb C))$ is given by
\begin{equation}\label{eq:PtildePkernels}
P(x,x')(y,y') = \widetilde P( (x,y), (x',y'))
\end{equation}
 and, by Theorem~\ref{thm:HSopB2ker},
that 
\begin{equation}
\| P(x_1,x_1') - P(x_2,x_2') \|_{\cB_2(L^2(Y;\mathbb C))} 
= 
\| \widetilde P( (x_1,\cdot), (x_1',\cdot)) - \widetilde P( (x_2,\cdot), (x_2',\cdot)) \|_{L^2(Y\times Y; \mathbb C)}
\end{equation}
which tends to zero as $(x_2,x_2') \to (x_1,x_1')$ since $\widetilde P$ is 
uniformly continuous. 
Second, by \eqref{eq:PtildePkernels} for each $x\in X$
 the kernel for the operator $P(x,x) \in \cB_2(L^2(Y;\mathbb C))$ is continuous.
Moreover, by Lemma~\ref{lemma:PxxNonNeg}, $P(x,x)$ is a nonnegative definite operator.
Therefore by the classical Mercer Theorem 
$P(x,x) \in \cB_1(L^2(Y;\mathbb C))$ is trace class for all $x
\in X$.
Finally, by \eqref{eq:TraceByEval} and Theorem~\ref{thm:IntegralFormulaForTrace}, 
\begin{equation}
\int_X \| P(x,x)\|_{\cB_1(\sH)}\, dx = \int_X \operatorname{Tr}(P(x,x))\, dx
= \int_X \int_Y \widetilde P( (x,y),(x,y'))\, dy dx
< \infty
\end{equation}
since $\widetilde P$ is continuous. 
 
 However, the following example shows that the 
 $\cB_2(L^2(Y;\mathbb C))$-valued case of Theorem~\ref{thm:MercerA} includes examples that cannot be shown to be trace class using the classical theorem.
 
\end{remark}

\begin{example}
 Let $X\subset \mathbb R^{m_1}$ and $Y\subset \mathbb R^{m_2}$ 
 be compact sets. Let $\sH = L^2(Y;\mathbb R)$ 
and let $\{\psi_\ell\}_{\ell=1}^\infty$
be an orthonormal basis for $\sH$ consisting of discontinuous functions.
For example, if $Y=[0,1]$, then we can choose the basis of Haar wavelets~\cite{Burrus}. Let $\{\mu_\ell\}_{\ell=1}^\infty \in \ell_1$
be a summable sequence of positive scalars. Then 
\begin{equation}
P\,\,:=\,\, \sum\limits_{\ell=1}^\infty \mu_\ell \,\psi_\ell \otimes \psi_\ell 
\,\,\in\,\, \cB_1(L^2(Y;\mathbb R)).
\end{equation}
Now let $\cP\in \cB_2(L^2(X;L^2(Y;\mathbb R)))$ be the nonnegative definite 
Hilbert-Schmidt operator
with constant operator-valued kernel 
$P\in L^2(X\times X;\cB_2(L^2(Y;\mathbb R)))$
given by $P(x,x') = P$. Then $\cP$ satisfies the assumptions of 
Theorem~\ref{thm:MercerA} and so is trace class. However, the
kernel $\widetilde P$ for $\widetilde \cP\in\cB_2(L^2(X\times Y;\mathbb R))$ 
is given by
\begin{equation}
\widetilde P( (x,y),(x',y') ) \,\,=\,\, \sum\limits_{\ell=1}^\infty \mu_\ell \,\psi_\ell (y) \psi_\ell(y') 
\end{equation}
which is not continuous. Therefore, we cannot apply the classical Mercer's Theorem to show that $\widetilde \cP$ and hence $\cP$ is trace class. 
\end{example}

The proof of the  general trace class Theorem~\ref{thm:TraceClassInfDimCpt}
only relies on Theorem~\ref{thm:MercerA}. However because it is of independent
interest, we conclude our discussion of Mercer's Theorem by establishing the
uniform convergence of the eigen-expansion for the kernel for 
operator-valued kernels. 

\begin{theorem}[Mercer's theorem, Part II]\label{thm:MercerB}
Suppose that $X \subseteq \mathbb R^m$ is compact.
Let $\cP\in\cB_2(L^2(X;\sH))$ be a Hilbert-Schmidt operator on $L^2(X;\sH)$
that is non-negative definite Hermitian, $\cP \geq 0$. 
Suppose that the  kernel of $\cP$
has  a continuous representative, $P\in C^0(X\times X; \cB_2(\sH))$.
In addition, if $\sH$ is infinite dimensional, suppose 
that $\exists M<\infty$ so that
$\|P(x,x)\|_{\cB_1(\sH)} \leq M$  for all $x\in X$.
Let $\{ \phi_\ell\}_{\ell=1}^\infty$ be an orthonormal basis of 
eigenfunctions  of $\cP$ with eigenvalues $\{ \lambda_\ell\}_{\ell=1}^\infty$.
Then the eigen-expansion,
\begin{equation}\label{eq:Mercer}
P(x,y) \,\,=\,\, \sum\limits_{\ell=1}^\infty
\lambda_\ell \, \phi_\ell(x)\otimes \phi_\ell(y),
\end{equation}
converges uniformly in $\cB_2(\sH)$ in that
\begin{equation}
\sup\limits_{(x,y)\in X\times X} 
\left\| P(x,y) - \sum\limits_{\ell=1}^n \lambda_\ell \phi_\ell(x)\otimes \phi_\ell(y) 
\right\|_{\cB_2(\sH)} \,\,\to\,\, 0 \qquad \text{as } n\to \infty.
\end{equation}
Furthermore,
\begin{equation}\label{eq:MercerTraceFormula}
\Tr(\cP)\,\,=\,\, \int_X \Tr(P(x,x))\, dx.
\end{equation}
\end{theorem}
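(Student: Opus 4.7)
I would split the proof into the trace identity \eqref{eq:MercerTraceFormula} and the uniform convergence. For the trace identity, fix an orthonormal basis $\{h_k\}_k$ of $\sH$ and consider the isometries $J_k\colon L^2(X;\mathbb{C})\to L^2(X;\sH)$, $f\mapsto fh_k$. Each compressed operator $J_k^*\cP J_k$ is nonnegative trace class on $L^2(X;\mathbb{C})$ with continuous scalar kernel $(x,y)\mapsto\langle h_k,P(x,y)h_k\rangle_\sH$, so classical (scalar) Mercer yields $\Tr(J_k^*\cP J_k)=\int_X\langle h_k,P(x,x)h_k\rangle_\sH\,dx$. Since $\sum_k J_kJ_k^*=I$ strongly on $L^2(X;\sH)$, a standard manipulation with $\cP^{1/2}$ gives $\Tr(\cP)=\sum_k\Tr(J_k^*\cP J_k)$, and swapping the resulting non-negative sum and integral by monotone convergence produces $\Tr(\cP)=\int_X\Tr(P(x,x))\,dx$.

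For the uniform convergence, set $Q_n(x,y):=P(x,y)-\sum_{\ell=1}^{n}\lambda_\ell\phi_\ell(x)\otimes\phi_\ell(y)$, whose associated operator $\cQ_n$ is nonnegative (as in the proof of Theorem~\ref{thm:MercerA}). Testing $\langle f,\cQ_n f\rangle\ge 0$ against $f=\varphi_\epsilon^{x_1}u_1+\varphi_\epsilon^{x_2}u_2$ for Dirac sequences $\varphi_\epsilon$ and using the $\cB_2$-continuity of $Q_n$ in the limit $\epsilon\downarrow 0$ shows the $\sH\oplus\sH$-block
\[
\begin{pmatrix}Q_n(x_1,x_1)&Q_n(x_1,x_2)\\ Q_n(x_2,x_1)&Q_n(x_2,x_2)\end{pmatrix}
\]
is positive semidefinite for every $(x_1,x_2)\in X\times X$. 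The Douglas factorization then gives $Q_n(x,y)=Q_n(x,x)^{1/2}C(x,y)Q_n(y,y)^{1/2}$ with $\|C(x,y)\|_{\cB(\sH)}\le 1$, and combining this with $\|XYZ\|_{\cB_2}\le\|X\|_{\cB}\|Y\|_{\cB_2}\|Z\|_{\cB}$ together with the trace Cauchy-Schwarz $|\Tr(XY)|\le\|X\|_{\cB_2}\|Y\|_{\cB_2}$ yields the key pointwise bound
\[
\|Q_n(x,y)\|_{\cB_2(\sH)}^{2}\le\|Q_n(x,x)\|_{\cB_2(\sH)}\|Q_n(y,y)\|_{\cB_2(\sH)}.
\]
Thus it suffices to prove $\|Q_n(x,x)\|_{\cB_2(\sH)}\to 0$ uniformly on $X$. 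The map $x\mapsto\|Q_n(x,x)\|_{\cB_2(\sH)}$ is continuous (from $Q_n\in C^0(X\times X;\cB_2(\sH))$) and decreasing in $n$ (since $0\le Q_{n+1}(x,x)\le Q_n(x,x)$ and singular values of PSD operators respect operator ordering by the Courant--Fischer min-max principle), so Dini's theorem on compact $X$ reduces matters to pointwise convergence at every $x$, i.e.\ to the pointwise operator identity $P(x,x)=\sum_{\ell}\lambda_\ell\phi_\ell(x)\otimes\phi_\ell(x)$ in $\cB_1(\sH)$ for every $x\in X$.

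The main obstacle is this pointwise identity: Corollary~\ref{cor:KernelExpansion} only supplies a.e.\ equality of the diagonal series, and under our hypotheses $\Tr(P(x,x))$ need not be continuous, so a direct Dini argument on $g(x):=\sum_\ell\lambda_\ell\|\phi_\ell(x)\|_\sH^{2}$ fails. I would resolve this via a reproducing-kernel argument. Fix $x_0\in X$, $v\in\sH$, and set $g_{x_0,v}(y):=P(y,x_0)v\in L^2(X;\sH)$. A dominated-convergence computation using the $\cB_2$-continuity of $P$ shows $\cP(\varphi_\epsilon v)\to g_{x_0,v}$ in $L^2(X;\sH)$ for any Dirac sequence $\varphi_\epsilon\to\delta_{x_0}$, so $g_{x_0,v}\in\overline{\mathrm{range}(\cP)}$; its expansion in $\{\phi_\ell\}$ therefore has no kernel component, giving $g_{x_0,v}=\sum_\ell \lambda_\ell\langle\phi_\ell(x_0),v\rangle_\sH\,\phi_\ell$ in $L^2$. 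The uniform bound $g(x)\le\|P(x,x)\|_{\cB_1(\sH)}\le M$ together with Cauchy--Schwarz gives $\sum_\ell\lambda_\ell|\langle\phi_\ell(x_0),v\rangle|\,\|\phi_\ell(x)\|_\sH\le\|v\|\sqrt{g(x_0)g(x)}\le M\|v\|$, so the series also converges absolutely in $\sH$ at every $x$ to some $S(x)$; a uniform-tail estimate (controlled by this same bound) then shows $S$ is continuous. Since $S=g_{x_0,v}$ almost everywhere and both are continuous, they coincide everywhere, and evaluating at $x=x_0$ yields $P(x_0,x_0)v=\sum_\ell\lambda_\ell\langle\phi_\ell(x_0),v\rangle\phi_\ell(x_0)$ for every $v\in\sH$. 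Feeding this pointwise identity into Dini and then into the key inequality delivers the desired uniform convergence of~\eqref{eq:Mercer} in $\cB_2(\sH)$.
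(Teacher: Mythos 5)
Your proposal is correct in outline, and it takes a genuinely different route from the paper's. The paper proves uniform convergence by passing to the iterated kernel $P^{(2)}$, which is continuous with values in $\cB_1(\sH)$ (Propositions~\ref{prop:P2xxTrace}--\ref{prop:P2UnifConvgtTrace}); it applies Dini to the continuous function $\Tr(P^{(2)}(x,x))$, using the interleaving Lemma~\ref{lemma:Interleaved} and Proposition~\ref{prop:AbstractTraceFormula} to identify $\Tr(P^{(2)}(x,x))=\sum_\ell\lambda_\ell^2\|\phi_\ell(x)\|_\sH^2$, and then transfers uniformity to $P$ itself by the Cauchy--Schwarz estimate of Proposition~\ref{prop:PkernelUnifCauchy} together with Lemma~\ref{lemma:PeqR}; the trace formula comes from Proposition~\ref{prop:AbstractTraceFormula} plus dominated convergence. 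You instead avoid $P^{(2)}$ and the interleaving lemma entirely: the two-point block positivity plus Douglas factorization gives the pointwise bound $\|Q_n(x,y)\|_{\cB_2}^2\le\|Q_n(x,x)\|_{\cB_2}\|Q_n(y,y)\|_{\cB_2}$, and you apply Dini directly to $x\mapsto\|Q_n(x,x)\|_{\cB_2}$, which is continuous because $P$ is $\cB_2$-continuous (neatly sidestepping the fact that $\Tr(P(x,x))$ need not be continuous, which is exactly why the paper detours through $P^{(2)}$); the pointwise diagonal identity is obtained by the reproducing-kernel argument, whose uniform-tail step is the same Cauchy--Schwarz trick as Proposition~\ref{prop:PkernelUnifCauchy} but at the vector level. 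Your derivation of \eqref{eq:MercerTraceFormula} by compressing to the scalar kernels $\langle h_k,P(\cdot,\cdot)h_k\rangle_\sH$, invoking scalar Mercer, and summing over $k$ by Tonelli is also simpler than the paper's and is self-contained.

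One step should be made explicit, because as written it does not quite close: your Dini reduction requires $\|Q_n(x,x)\|_{\cB_2}\to0$ at each fixed $x$, i.e.\ convergence of the operator series to $P(x,x)$ in $\cB_2(\sH)$, whereas the reproducing-kernel argument only delivers the vector identity $P(x,x)v=\sum_\ell\lambda_\ell\langle\phi_\ell(x),v\rangle_\sH\,\phi_\ell(x)$ for each $v$, i.e.\ strong operator convergence of the partial sums; in infinite dimensions strong convergence of a sequence of operators does not by itself give Schatten-norm convergence. The fix is short and uses the diagonal trace-class hypothesis where you would expect: since $0\le Q_n(x,x)\le P(x,x)\in\cB_1(\sH)$ (positivity of $Q_n(x,x)$ by the one-point version of your mollifier argument, as in Lemma~\ref{lemma:PxxNonNeg}), and $\langle h_k,Q_n(x,x)h_k\rangle_\sH\to0$ for each $k$ by the strong identity, dominated convergence of the sum $\Tr(Q_n(x,x))=\sum_k\langle h_k,Q_n(x,x)h_k\rangle_\sH$ against the summable majorant $\langle h_k,P(x,x)h_k\rangle_\sH$ gives $\|Q_n(x,x)\|_{\cB_1}=\Tr(Q_n(x,x))\to0$, hence $\|Q_n(x,x)\|_{\cB_2}\to0$. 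With that two-line insertion your argument is complete.
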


Before presenting the details, we provide an overview of the proof of the theorem.
\begin{enumerate}
\item[(a)]  The iterated operator
$\cP^2$ is trace class and  it's kernel, $P^{(2)}$, is continuous.
(Proposition~\ref{prop:P2xxTrace})
\item[(b)]   $\sum\limits_{\ell=1}^\infty
\lambda_\ell^2 \, \| \phi_\ell(x) \|_\sH^2$ converges pointwise
since it is bounded  by the partial integral with respect to $y$ 
of $\| P(x,y)\|^2_{\cB_2(\sH)}$. (Proposition~\ref{prop:P2TraceBound})
\item[(c)]  The eigen-expansion 
$P^{(2)}(x,y) = 
\sum\limits_{\ell=1}^\infty \lambda_\ell^2 \, \phi_\ell(x)\otimes \phi_\ell(y)$
converges pointwise in $\cB_2(\sH)$ thanks to the Cauchy-Schwarz inequality
and  (b). (Proposition~\ref{prop:P2PtWiseConvgt})
\item[(d)]  
$\Tr(P^{(2)}(x,x)) = 
\sum\limits_{\ell=1}^\infty \lambda_\ell^2 \, \|\phi_\ell(x) \|_\sH^2$
converges uniformly. (Proposition~\ref{prop:P2UnifConvgtTrace}) 
In the case that $\dim(\sH)$ is infinite, this result relies on the fact
that for any Hermitian matrix, $\mathbf A$, and any vector $\mathbf v$,
$\Tr(\mathbf A) \leq \Tr(\mathbf A + \mathbf v\mathbf v^*)$, since
the eigenvalues of $\mathbf A + \mathbf v\mathbf v^*$ are interleaved
with those of $\mathbf A$. (Proposition~\ref{prop:AbstractTraceFormula} and Lemma~\ref{lemma:Interleaved})
\item[(e)] The eigen-expansion 
$P(x,y) = 
\sum\limits_{\ell=1}^\infty \lambda_\ell \, \phi_\ell(x)\otimes \phi_\ell(y)$
converges uniformly in $\cB_2(\sH)$  thanks to the Cauchy-Schwarz inequality
and  (d). 
It is in this step that the more restrictive condition that
$x\mapsto \| P(x,x) \|_{\cB_1(\sH)}$ is bounded is required.
(Lemma~\ref{lemma:PeqR} and Proposition~\ref{prop:PkernelUnifCauchy})
\item[(f)] Finally, we use Proposition~\ref{prop:AbstractTraceFormula} to prove \eqref{eq:MercerTraceFormula} for the trace of $\cP$.
\end{enumerate}

We now state and prove these results.
First we note that by \eqref{eq:SubMult}, the composite operator, $\cP^2=\cP\circ\cP$ is trace class.
We begin with two propositions about the kernel,  $P^{(2)}$, of $\cP^2$.

\begin{proposition} \label{prop:P2xxTrace}
Let $X \subseteq \mathbb R^m$ be compact
and let  $\cP \in \cB_2(L^2(X;\sH))$ be Hermitian. Furthermore, 
suppose that $\cP$ has a  continuous kernel, $P\in C^0(X\times X; \cB_2(\sH))$. 
Then the kernel, $P^{(2)}$,  of $\cP^2$ is continuous and takes values in
 $\cB_1(\sH)$, that is, 
$P^{(2)} \in C^0(X\times X; \cB_1(\sH))$.
\end{proposition}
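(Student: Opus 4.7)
The plan is to start from Remark~\ref{rem:compositionHS}, which identifies $P^{(2)}(x,y) = \int_X P(x,z)P(z,y)\,dz$ as the kernel of $\cP^2$ in $L^2(X\times X;\cB_2(\sH))$, and to upgrade this identification to a continuous representative that takes values in the smaller ideal $\cB_1(\sH)$. The key technical input is the H\"older-type factorization inequality $\|AB\|_{\cB_1(\sH)} \le \|A\|_{\cB_2(\sH)}\|B\|_{\cB_2(\sH)}$, which encodes the fact that the product of two Hilbert-Schmidt operators is trace class; it is sharper than the submultiplicative estimate \eqref{eq:SubMult} listed in the preliminaries and is a standard result from Simon's book that I would invoke directly.

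First I would fix $(x,y)\in X\times X$ and argue that $z\mapsto P(x,z)P(z,y)$ is Bochner integrable into $\cB_1(\sH)$. Since $P:X\times X \to \cB_2(\sH)$ is continuous on a compact set, it is bounded by some constant $M$ in the Hilbert-Schmidt norm and uniformly continuous. The integrand is strongly measurable (as a continuous composition of continuous $\cB_2(\sH)$-valued maps) and is bounded by $M^2$ in $\cB_1(\sH)$ by the factorization inequality, so $\widetilde P(x,y) := \int_X P(x,z)P(z,y)\,dz$ is well-defined as an element of $\cB_1(\sH)$. Because the inclusion $\cB_1(\sH) \hookrightarrow \cB_2(\sH)$ is continuous, this $\cB_1(\sH)$-valued integral coincides with the $\cB_2(\sH)$-valued one from Remark~\ref{rem:compositionHS}, so $\widetilde P$ is a pointwise-defined representative of $P^{(2)}$ that already takes values in $\cB_1(\sH)$.

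For continuity in $(x,y)$, given $(x_n,y_n)\to (x,y)$ I would add and subtract $P(x,z)P(z,y_n)$ to split the difference and apply the factorization inequality to each piece, obtaining
\begin{equation}
\|\widetilde P(x_n,y_n) - \widetilde P(x,y)\|_{\cB_1(\sH)} \,\,\le\,\, M\int_X \bigl(\|P(x_n,z)-P(x,z)\|_{\cB_2(\sH)} + \|P(z,y_n)-P(z,y)\|_{\cB_2(\sH)}\bigr)\,dz.
\end{equation}
By uniform continuity of $P$ on the compact set $X\times X$, the integrand tends to $0$ uniformly in $z$, and since $X$ has finite measure the right-hand side vanishes in the limit. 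The only non-routine step in the argument is recognizing that Hilbert-Schmidt times Hilbert-Schmidt is trace class with the sharp quadratic bound; once this is accepted, the continuity and $\cB_1(\sH)$-valuedness of $P^{(2)}$ follow by routine Bochner-integral bookkeeping.
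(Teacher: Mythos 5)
Your proposal is correct and follows essentially the same route as the paper: identify $P^{(2)}(x,y)=\int_X P(x,z)P(z,y)\,dz$ via Remark~\ref{rem:compositionHS}, use the H\"older-type bound $\|AB\|_{\cB_1(\sH)}\le\|A\|_{\cB_2(\sH)}\|B\|_{\cB_2(\sH)}$ to get $\cB_1(\sH)$-valuedness, and prove continuity by an add-and-subtract estimate using the compactness of $X$. The only cosmetic difference is that you bound things with the uniform sup of $\|P\|_{\cB_2(\sH)}$ and uniform continuity, whereas the paper runs the same estimates through the Cauchy--Schwarz inequality in the $z$-integral; the content is identical.
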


\begin{proof}
By Remark \ref{rem:compositionHS}, \eqref{eq:TriInt} and \eqref{eq:SubMult}, for each $x,y\in X$,
\begin{align}
\| P^{(2)}(x,y) \|_{\cB_1(\sH)}
 \,\,& \leq \,\, \int_X \| P(x,z) P(z,y) \|_{\cB_1(\sH)} \, dz
\\
 \,\,& \leq \,\, \int_X \| P(x,z)\|_{\cB_2(\sH)} \| P(z,y) \|_{\cB_2(\sH)} \, dz
 \\
 \,\, & \leq \,\, \left( \int_X \| P(x,z)\|_{\cB_2(\sH)}^2 \, dz\right)^{1/2} \,
 \left( \int_X \| P(z,y)\|_{\cB_2(\sH)}^2 \, dz\right)^{1/2},
\end{align}
which is finite since $P\in C^0(X\times X; \cB_2(\sH))$ and $X$ is compact.
Therefore for all $x,y\in X$,  $P^{(2)}(x,y) \in \cB_1(\sH)$.
A similar calculation shows that
\begin{align}
&\| P^{(2)}(x_1,y_1) -  P^{(2)}(x_2,y_2) \|_{\cB_1(\sH)}
\nonumber
\\
&\,\,\leq\,\,
\left( \int_X \| P(x_1,z)\|_{\cB_2(\sH)}^2 \, dz\right)^{1/2}\,
\left( \int_X \| P(z,y_1) -  P(z,y_2) \|_{\cB_2(\sH)}^2 \, dz\right)^{1/2}
\nonumber
\\
&\,\,+\,\,
\left( \int_X \| P(z,y_2)\|_{\cB_2(\sH)}^2 \, dz\right)^{1/2}\,
\left( \int_X \| P(x_1,z) -  P(x_2,z) \|_{\cB_2(\sH)}^2 \, dz\right)^{1/2}
\end{align}
which implies that the kernel $P^{(2)}$ is continuous since $P$ is
continuous and $X$ is compact.
\end{proof}

\begin{proposition} \label{prop:P2TraceBound}
Let $X \subseteq \mathbb R^m$ be compact
and let  $\cP \in \cB_2(L^2(X;\sH))$ be Hermitian with continuous kernel
$P\in C^0(X\times X; \cB_2(\sH))$. 
Let $\{(\lambda_\ell, \phi_\ell)\}_\ell$ be the eigenpairs for $\cP$. Then 
for all $n$, 
\begin{equation}\label{eq:P2TraceBound}
\sum\limits_{\ell=1}^n \lambda_\ell^2 \| \phi_\ell(x) \|^2
\,\,\leq\,\,
\| P_x\|^2_{L^2(X;\cB_2(\sH))},
\end{equation}
where
$\| P_x\|^2_{L^2(X;\cB_2(\sH))} := \int_X \| P(x,y) \|^2_{\cB_2(\sH)}\, dy 
< \infty$.
Furthermore,  
$\sum\limits_{\ell=1}^\infty \lambda_\ell^2 \| \phi_\ell(x) \|^2$
converges pointwise in $x$. 
\end{proposition}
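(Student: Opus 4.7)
The plan is to realise $\sum_\ell \lambda_\ell^2 \|\phi_\ell(x)\|_\sH^2$ as the total Parseval energy of the vectors $\lambda_\ell\phi_\ell(x)\in\sH$, rewrite each component as a Fourier coefficient of a suitably chosen $\sH$-valued function on $X$, and close with Bessel's inequality in $L^2(X;\sH)$. Two preliminary observations: first, since $P\in C^{0}(X\times X;\cB_2(\sH))$ and $X\times X$ is compact, $\|P(\cdot,\cdot)\|_{\cB_2(\sH)}$ is uniformly bounded, so $\|P_x\|_{L^2(X;\cB_2(\sH))}^{2}<\infty$ for every $x\in X$. Second, a repetition of the argument of Proposition~\ref{prop:Cont} shows that the function $\cP\phi_\ell$ is continuous from $X$ to $\sH$, so the eigenvalue identity $\cP\phi_\ell=\lambda_\ell\phi_\ell$, which \emph{a priori} holds in $L^2(X;\sH)$, upgrades to the pointwise identity
\begin{equation*}
\lambda_\ell\phi_\ell(x)=\int_X P(x,y)\phi_\ell(y)\,dy \qquad\text{for every } x\in X.
\end{equation*}

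Fix $x\in X$ and an orthonormal basis $\{h_k\}$ of $\sH$, and define $g_{x,k}:X\to\sH$ by $g_{x,k}(y):=P(x,y)^{*}h_k$. Each $g_{x,k}$ is strongly measurable and satisfies $\|g_{x,k}\|_{L^2(X;\sH)}^{2}=\int_X\|P(x,y)^{*}h_k\|_{\sH}^{2}\,dy$. Using \eqref{eq:SwapIntInnerProd} together with the definition of the adjoint,
\begin{equation*}
\langle h_k,\lambda_\ell\phi_\ell(x)\rangle_\sH=\int_X\langle P(x,y)^{*}h_k,\phi_\ell(y)\rangle_\sH\,dy=\langle g_{x,k},\phi_\ell\rangle_{L^{2}(X;\sH)},
\end{equation*}
so the quantities entering Parseval for $\lambda_\ell\phi_\ell(x)\in\sH$ in the basis $\{h_k\}$ are exactly the Fourier coefficients of $g_{x,k}$ with respect to the orthonormal set $\{\phi_\ell\}\subset L^{2}(X;\sH)$.

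Apply Bessel's inequality to each $g_{x,k}$, sum over $k$, and use Tonelli to commute $\sum_k$ with both $\sum_\ell$ and $\int_X$ (all integrands non-negative); then identify $\sum_k\|P(x,y)^{*}h_k\|_{\sH}^{2}$ with $\|P(x,y)^{*}\|_{\cB_2(\sH)}^{2}=\|P(x,y)\|_{\cB_2(\sH)}^{2}$ via \eqref{eq:HSnormViaONB} and the isometry of the adjoint on $\cB_2(\sH)$. This yields
\begin{equation*}
\sum_\ell\lambda_\ell^{2}\|\phi_\ell(x)\|_\sH^{2}=\sum_\ell\sum_k|\langle h_k,\lambda_\ell\phi_\ell(x)\rangle_\sH|^{2}\leq\int_X\|P(x,y)\|_{\cB_2(\sH)}^{2}\,dy,
\end{equation*}
which is \eqref{eq:P2TraceBound} for the full series; the inequality for arbitrary partial sums follows by monotonicity of non-negative sums, and the finiteness of the right-hand side gives the pointwise convergence claim. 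I do not expect a serious obstacle: the heart of the argument is a vectorial Bessel inequality, and the only point requiring care is the routine justification of the Tonelli swaps of non-negative double sums and integrals.
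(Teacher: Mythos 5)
Your proposal is correct and is essentially the paper's own argument: the paper's auxiliary function $\sum_k P_{km}(x,\cdot)\,h_k$ is exactly your $g_{x,m}(\cdot)=P(x,\cdot)^{*}h_m$, so both proofs amount to writing the Fourier coefficients of $\lambda_\ell\phi_\ell(x)$ as $L^2(X;\sH)$ inner products with the orthonormal family $\{\phi_\ell\}$, applying Bessel's inequality there, and summing over the basis of $\sH$ to recover $\int_X\|P(x,y)\|_{\cB_2(\sH)}^2\,dy$. Your coordinate-free phrasing via the adjoint (plus the explicit remark that continuity upgrades the eigenvalue identity to a pointwise one) is just a cleaner rendering of the same route.
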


\begin{proof}
Let $\{ h_k\}_k$ be an orthonormal basis for $\sH$ and $P_{km}(x,y) = 
\langle P(x,y)h_k,h_m\rangle_\sH$. Then, using $\left\| \sum\limits_{k=1}^\infty \alpha_k h_k \right\|_\sH^2 =  \sum\limits_{k=1}^\infty |\alpha_k|^2$, we have that
\begin{align}
\sum\limits_{\ell=1}^n \lambda_\ell^2 \left\| \phi_\ell(x) \right\|_\sH^2
\,\,&=\,\,
\sum\limits_{\ell=1}^n \| (\cP\phi_\ell)(x) \|_\sH^2
\nonumber
\\
\,\,&=\,\,
\sum\limits_{\ell=1}^n \,\left\| \, \int_X P(x,y)  \sum\limits_{k=1}^\infty\,
\langle h_k ,\phi_\ell(y)\rangle_\sH \,h_k\, dy \right\|_\sH^2
\nonumber
\\
\,\,&=\,\,
\sum\limits_{\ell=1}^n \,\left\|  \,\int_X 
 \sum\limits_{k=1}^\infty \,\langle h_k, \phi_\ell(y)\rangle_\sH\,
\sum\limits_{m=1}^\infty P_{km}(x,y)h_m\, dy \right\|_\sH^2
\nonumber
\\
\,\,&=\,\,
\sum\limits_{\ell=1}^n \,\left\| \, \sum\limits_{m=1}^\infty  \left( 
\int_X  \sum\limits_{k=1}^\infty P_{km}(x,y) \,
\langle h_k, \phi_\ell(y)\rangle_\sH\, dy\right)
h_m\right\|_\sH^2
\nonumber
\\
\,\,&=\,\,
  \sum\limits_{\ell=1}^n \sum\limits_{m=1}^\infty \,\left|   
\int_X \,\left\langle \sum\limits_{k=1}^\infty P_{km}(x,y)  h_k,
  \phi_\ell(y)\right\rangle_\sH\, dy \,\right|^2
  \nonumber
\\
\,\,&=\,\,
\sum\limits_{m=1}^\infty
  \sum\limits_{\ell=1}^n  \,\left|   
 \,\left\langle \sum\limits_{k=1}^\infty P_{km}(x,\cdot)  h_k,
  \phi_\ell(\cdot)\right\rangle_{L^2(X;\sH)} \,\right|^2
    \nonumber
\\
\,\,&\leq\,\,
\sum\limits_{m=1}^\infty  \, \int_X\, 
\left\| \sum\limits_{k=1}^\infty P_{km}(x,y)  h_k \right\|_\sH^2\, dy
 \nonumber
 \\
\,\,&=\,\,
\int_X \sum\limits_{k,m=1}^\infty | P_{km}(x,y) |^2\, dy
\,\,=\,\, 
\int_X \, \|P(x,y)\|_{\cB_2(\sH)}^2\,dy,
\end{align}
which is finite since $P$ is continuous and $X$ is compact, 
and where the inequality follows by Bessel's inequality.
Finally, since the sequence of partial sums in \eqref{eq:P2TraceBound}
is increasing and bounded above, the series converges pointwise.
\end{proof}

The following definition is due to Smithies~\cite{smithies1958}.

\begin{definition}
We say that a sequence $f_n \in L^2(X;\sH)$ is 
\emph{relatively uniformly convergent} to $f \in L^2(X;\sH)$ if there is a
function $g\in L^2(X;\sH)$ so that for all $\epsilon > 0$ there is an 
$N=N(\epsilon)$
so that for all  $n > N$ and $x\in X$ 
\begin{equation}
\| f_n(x) - f(x) \|_\sH \,\, < \,\, g(x).
\end{equation}
\end{definition}

\begin{remark}
If $f_n$ is relatively uniformly Cauchy (in the obvious sense),
then there exists an $f$ so that
$f_n$ is relatively uniformly convergent to $f$. Also,
if $f_n$ is relatively uniformly convergent to $f$, then $f_n$ converges pointwise
to $f$.
\end{remark}

\begin{proposition} \label{prop:P2PtWiseConvgt}
Let $X \subseteq \mathbb R^m$ be compact
and let  $\cP \in \cB_2(L^2(X;\sH))$ be Hermitian with continuous kernel
$P\in C^0(X\times X; \cB_2(\sH))$. 
Let $P^{(2)}$ be the kernel associated with the  operator $\cP^2$.
Then,
\begin{equation}\label{eq:P2series}
P^{(2)}(x,y) \,\,=\,\, \sum\limits_{\ell=1}^\infty
\lambda^2_\ell \, \phi_\ell(x)\otimes \phi_\ell(y) 
\qquad\text{for all } (x,y)\in X\times X,
\end{equation}
where the series is pointwise convergent  in $\cB_2(\sH)$.
\end{proposition}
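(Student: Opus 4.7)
The plan is to proceed in two steps: establish absolute Cauchy-ness of the partial sums in $\cB_2(\sH)$ at each point, and then identify the limit with $P^{(2)}(x,y)$ via an inner-product test.

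First, for fixed $(x,y)\in X\times X$ and $n>m$, using \eqref{normab} and the Cauchy--Schwarz inequality,
\begin{align}
\Big\|\sum_{\ell=m+1}^n\lambda_\ell^2\,\phi_\ell(x)\otimes \phi_\ell(y)\Big\|_{\cB_2(\sH)}
&\le \sum_{\ell=m+1}^n\lambda_\ell^2\|\phi_\ell(x)\|_\sH\|\phi_\ell(y)\|_\sH\\
&\le \Big(\sum_{\ell=m+1}^n\lambda_\ell^2\|\phi_\ell(x)\|_\sH^2\Big)^{1/2}\Big(\sum_{\ell=m+1}^n\lambda_\ell^2\|\phi_\ell(y)\|_\sH^2\Big)^{1/2},
\end{align}
which tends to $0$ as $m,n\to\infty$ by Proposition~\ref{prop:P2TraceBound}. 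Hence the partial sums form a Cauchy sequence in $\cB_2(\sH)$ and converge pointwise to some $T(x,y)\in\cB_2(\sH)$.

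Second, I will show $T(x,y)=P^{(2)}(x,y)$ by computing matrix entries against arbitrary $h_1,h_2\in\sH$. By Remark~\ref{rem:compositionHS} and the Hermitian symmetry $P(x,z)^*=P(z,x)$ (Remark~\ref{rem:AdjointHS}), an application of \eqref{eq:SwapIntInnerProd} gives
\[
\langle P^{(2)}(x,y)h_1,h_2\rangle_\sH=\int_X\langle P(z,y)h_1,P(z,x)h_2\rangle_\sH\,dz=\langle A_yh_1,A_xh_2\rangle_{L^2(X;\sH)},
\]
where $A_y:\sH\to L^2(X;\sH)$ is the bounded map $(A_yh)(z):=P(z,y)h$. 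A direct calculation using \eqref{eq:SwapIntInnerProd} yields the adjoint identity $\langle A_yh,f\rangle_{L^2(X;\sH)}=\langle h,(\cP f)(y)\rangle_\sH$, where the right-hand side is well-defined pointwise in $y$ because $\cP f$ is continuous on $X$ (a consequence of the continuity of $P$ and compactness of $X$, exactly as in the proof of Proposition~\ref{prop:Cont}). Taking $f=\phi_m$ gives $\langle \phi_m,A_yh\rangle_{L^2(X;\sH)}=\lambda_m\langle \phi_m(y),h\rangle_\sH$, and testing $f\in\ker\cP$ shows $A_yh\perp\ker\cP$; thus
\[
A_yh=\sum_m\lambda_m\langle\phi_m(y),h\rangle_\sH\,\phi_m\qquad\text{in }L^2(X;\sH).
\]
Substituting these expansions for $A_{y}h_1$ and $A_xh_2$ and using orthonormality of $\{\phi_m\}$ yields
\[
\langle P^{(2)}(x,y)h_1,h_2\rangle_\sH=\sum_m\lambda_m^2\langle h_1,\phi_m(y)\rangle_\sH\langle\phi_m(x),h_2\rangle_\sH=\langle T(x,y)h_1,h_2\rangle_\sH,
\]
so $P^{(2)}(x,y)=T(x,y)$.

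The main obstacle is precisely this pointwise identification: Theorem~\ref{thm:HSopB2ker} applied to $\cP_n^2\to\cP^2$ in $\cB_2(L^2(X;\sH))$ only yields $T=P^{(2)}$ almost everywhere in $X\times X$, and upgrading to every $(x,y)$ requires the direct $\sH$-entrywise computation above, which relies crucially on the Hermitian symmetry of the kernel and on the continuity of $\cP f$ on $X$ that lets us evaluate $(\cP f)(y)$ at any fixed $y$ rather than only almost everywhere.
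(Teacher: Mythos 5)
Your proof is correct, but the identification of the limit is carried out by a genuinely different route than the paper's. The first half (pointwise Cauchy-ness of the partial sums in $\cB_2(\sH)$ via \eqref{normab}, Cauchy--Schwarz, and Proposition~\ref{prop:P2TraceBound}) is the same. For the identification, the paper proceeds globally: it shows the series converges uniformly in $y$ for each fixed $x$, that the limit $\widetilde P^{(2)}$ is continuous, that the partial sums converge in $L^2(X\times X;\cB_2(\sH))$ both to $\widetilde P^{(2)}$ (dominated convergence) and to $P^{(2)}$ (Theorem~\ref{thm:KernelExpansion}), hence $\widetilde P^{(2)}=P^{(2)}$ a.e., and finally everywhere because both kernels are continuous. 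You instead identify the limit at each fixed $(x,y)$ by an entrywise computation: the factorization $\langle P^{(2)}(x,y)h_1,h_2\rangle_\sH=\langle A_yh_1,A_xh_2\rangle_{L^2(X;\sH)}$ with $(A_yh)(z)=P(z,y)h$, the adjoint identity $\langle A_yh,f\rangle=\langle h,(\cP f)(y)\rangle$, and the resulting expansion $A_yh=\sum_m\lambda_m\langle\phi_m(y),h\rangle\phi_m$ together with $A_yh\perp\ker\cP$. This is a legitimate alternative: it yields the pointwise statement directly, without passing through $L^2$-limits, uniqueness of limits, or the continuity of the limiting kernel (in particular it sidesteps the paper's step of upgrading continuity in each variable separately to joint continuity of $\widetilde P^{(2)}$), and Hermitian symmetry enters transparently through $P(z,y)^*=P(y,z)$. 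What it costs is a heavier reliance on pointwise evaluation of continuous representatives: you use that $(\cP f)(y)$ is given for \emph{every} $y$ by the integral formula, and for $f\in\ker\cP$ you need this continuous function, which vanishes a.e., to vanish at the specific point $y$ --- the same ``continuous and equal a.e.\ implies equal everywhere'' principle the paper's final step also uses (valid when every nonempty relatively open subset of $X$ has positive measure), so you are at the same level of rigor as the paper on this point. One small bookkeeping remark: when you write $\langle P^{(2)}(x,y)h_1,h_2\rangle=\int_X\langle P(z,y)h_1,P(z,x)h_2\rangle\,dz$, you are implicitly using the pointwise integral representation $P^{(2)}(x,y)=\int_X P(x,z)P(z,y)\,dz$ at every $(x,y)$, which is how the paper treats $P^{(2)}$ in Proposition~\ref{prop:P2xxTrace}, so this is consistent with the intended meaning of the statement.
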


\begin{proof}
We begin by showing that, for each fixed $x$, 
the series on the right-hand side of \eqref{eq:P2series}
is relatively uniformly Cauchy in $y$. Indeed, by the Cauchy-Schwarz inequality,
\eqref{normab}, and Proposition~\ref{prop:P2TraceBound}, we have that
\begin{align}
\left\| \sum\limits_{\ell=n}^m 
\lambda_\ell^2 \phi_\ell(x) \otimes \phi_\ell(y) \right\|_{\cB_2(\sH)}
\,\,&\leq\,\,
\left[ \sum\limits_{\ell=n}^m  \lambda_\ell^2 \|\phi_\ell(x)\|_\sH^2 \right]^{1/2}
\left[ \sum\limits_{\ell=n}^m  \lambda_\ell^2 \|\phi_\ell(y)\|_\sH^2 \right]^{1/2}
\nonumber
\\
\,\,&\leq\,\,
\epsilon \,\| P_y \|_{L^2(X;\cB_2(\sH))}.
\end{align}
Since $P$ is  bounded, for each fixed $x$, the series is  uniformly Cauchy in $y$,
and hence converges uniformly in $y$ to a kernel $\widetilde{P}^{(2)}(x,y)$.
Furthermore, since the eigenfunctions are continuous in $y$, for each $x$,
the limit $\widetilde{P}^{(2)}(x,y)$ is continuous in $y$.
Similarly, for each $y$,
$\widetilde{P}^{(2)}(x,y)$ is continuous in $x$.
Hence $\widetilde{P}^{(2)}\in C^0(X\times X;\cB_2(\sH))$.
Let $Q_n(x,y)$ be the $n$-th partial sum of the series in \eqref{eq:P2series}.
We know that $Q_n \to \widetilde{P}^2$
pointwise. Since $Q_n(x,y) \leq \| P_x \|_{L^2(X;\cB_2(\sH))} \| P_y \|_{L^2(X;\cB_2(\sH))}$, $Q_n$ is bounded  and so 
$Q_n \to \widetilde{P}^2$ in $L^2(X\times X;\cB_2(\sH))$.
Also, by Theorem~\ref{thm:KernelExpansion}, $Q_n \to P^2$ in $L^2(X\times X;\cB_2(\sH))$. 
So by uniqueness of limits, $ \widetilde{P}^2(x,y) = P^2(x,y)$ 
for almost all $(x,y)$. Since both kernels are continuous, they are equal
everywhere.
\end{proof}

\begin{remark}
Since the kernel, $P^{(2)}$ takes values in $\cB_1(\sH)$, it is tempting
to conclude from \eqref{eq:P2series}
 that $\Tr(P^{(2)}(x,x)) \,\,=\,\, \sum\limits_{\ell=1}^\infty
\lambda^2_\ell \, \| \phi_\ell(x) \|^2_\sH$. However, this is not guaranteed
since we do not  know that the series in \eqref{eq:P2series} converges 
pointwise in $\cB_1(\sH)$,
or if it does that  it actually converges to $\Tr(P^{(2)}(x,x))$. 
One of the challenges in establishing such a result is that for 
a given $x$, the sequence $\{\phi_\ell(x)\}_\ell \in \sH$ does not form a 
basis for $\sH$, let alone an orthonormal one. 
To address this issue, we appeal to the following result.
\end{remark}

\begin{proposition}\label{prop:AbstractTraceFormula}
Let $\{\chi_\ell\}_\ell$ be any sequence of elements of $\sH$ for which
\begin{equation}
\cA\,\,=\,\, \sum\limits_{\ell=1}^\infty \chi_\ell \otimes \chi_\ell
\end{equation}
converges in $\cB_2(\sH)$ and suppose that $\cA \in \cB_1(\sH)$. Then
\begin{equation}\label{eq:AbstractTraceFormula}
\Tr(\cA)\,\,=\,\, \sum\limits_{\ell=1}^\infty \| \chi_\ell \|_{\sH}^2.
\end{equation}
\end{proposition}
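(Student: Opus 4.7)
The plan is to compute $\Tr(\cA)$ as a diagonal sum in a fixed orthonormal basis and then invoke Tonelli's theorem to rearrange a double sum of nonnegative terms. Write $\cA_N := \sum_{\ell=1}^N \chi_\ell \otimes \chi_\ell$ for the $N$-th partial sum. Each summand is self-adjoint and nonnegative, because
\begin{equation}
\langle (\chi_\ell\otimes\chi_\ell) h, h\rangle_\sH \,=\, |\langle \chi_\ell, h\rangle_\sH|^2 \,\ge\, 0,
\end{equation}
so $\cA_N\ge 0$. Passing to the $\cB_2(\sH)$-limit (which in particular is a $\cB(\sH)$-norm limit, since $\|\cdot\|_{\cB(\sH)}\le\|\cdot\|_{\cB_2(\sH)}$) yields $\cA\ge 0$.

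Next I would fix any orthonormal basis $\{h_k\}$ of $\sH$. Because $\cA_N\to\cA$ in operator norm, we have strong convergence on each $h_k$, and the matrix-entry computation of a rank-one operator gives $\langle h_k,(\chi_\ell\otimes\chi_\ell) h_k\rangle_\sH = |\langle h_k,\chi_\ell\rangle_\sH|^2$, so for every $k$,
\begin{equation}
\langle h_k, \cA h_k\rangle_\sH \,=\, \lim_{N\to\infty}\langle h_k,\cA_N h_k\rangle_\sH \,=\, \sum_{\ell=1}^\infty |\langle h_k, \chi_\ell\rangle_\sH|^2.
\end{equation}
Since $\cA\in\cB_1(\sH)$, the definition \eqref{eq:DefTrace} gives $\Tr(\cA) = \sum_k \langle h_k, \cA h_k\rangle_\sH$, and all terms are nonnegative in view of $\cA\ge 0$. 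Tonelli's theorem applied to the resulting double sum of nonnegative terms then yields
\begin{equation}
\Tr(\cA) \,=\, \sum_k \sum_\ell |\langle h_k, \chi_\ell\rangle_\sH|^2 \,=\, \sum_\ell \sum_k |\langle h_k, \chi_\ell\rangle_\sH|^2 \,=\, \sum_{\ell=1}^\infty \|\chi_\ell\|_\sH^2,
\end{equation}
where the last equality is Parseval's identity applied to each $\chi_\ell\in\sH$, establishing \eqref{eq:AbstractTraceFormula}.

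There is no substantial obstacle here: once the nonnegativity of $\cA$ is in hand, the argument is pure bookkeeping with nonnegative double sums. The only point to watch is that the trace-class hypothesis on $\cA$ is needed only to guarantee that $\sum_k \langle h_k,\cA h_k\rangle_\sH$ is finite and equals $\Tr(\cA)$ via \eqref{eq:DefTrace}; with $\cA\ge 0$ already established, the Tonelli interchange requires no further justification, and one does not need to verify that $\{\chi_\ell\}_\ell$ possesses any orthogonality or basis-like property.
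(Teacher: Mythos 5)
Your proof is correct, and it takes a genuinely different and more elementary route than the paper. The paper first shows the partial sums $\cA_n$ have eigenvalues that increase in $n$ (via the rank-one update interleaving Lemma~\ref{lemma:Interleaved}), invokes the continuity of eigenvalues under norm convergence of compact operators (\cite[Theorem I.4.2]{gohberg1969introduction}), and then applies the monotone convergence theorem to the eigenvalue series to identify $\Tr(\cA)=\lim_n\Tr(\cA_n)$. You instead bypass the spectral picture entirely: you observe $\cA\ge 0$ as a norm limit of nonnegative operators, expand each diagonal entry $\langle h_k,\cA h_k\rangle_\sH=\sum_\ell|\langle h_k,\chi_\ell\rangle_\sH|^2$ using strong convergence of the partial sums, and then interchange the two nonnegative sums by Tonelli, finishing with Parseval. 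This avoids both the interleaving lemma and the external spectral-continuity result, at the cost of no generality: the trace-class hypothesis enters only through \eqref{eq:DefTrace}, exactly as you note, and no orthogonality of the $\chi_\ell$ is needed in either argument. What the paper's route buys beyond the identity itself is the auxiliary spectral information (monotone convergence of the eigenvalues $\lambda_k^{(n)}\uparrow\lambda_k$), but for the statement as given your bookkeeping argument is shorter and fully rigorous.
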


\begin{proof}
The operators
\begin{equation}
\cA_n\,\,=\,\, \sum\limits_{\ell=1}^n \chi_\ell \otimes \chi_\ell
\end{equation}
and $\cD_n = \cA-\cA_n$ are trace class, and since $\cD_n \geq 0$
we have that 
\begin{equation}
\sum\limits_{\ell=1}^n \| \chi_\ell \|_{\sH}^2 \,\,\leq \,\, \Tr(\cA).
\end{equation}
Therefore the series on the right hand side of \eqref{eq:AbstractTraceFormula}
converges. 

Let $\lambda_1^{(n)} \geq \lambda_2^{(n)} \geq \cdots 
\lambda_k^{(n)} \geq \cdots  \geq 0$ be the eigenvalues of the operator
$\cA_n$,  and 
$\lambda_1 \geq \lambda_2 \geq \cdots 
\lambda_k\geq \cdots  \geq 0$ the eigenvalues of $\cA$,
both enumerated with algebraic multiplicity.
By \cite[Theorem I.4.2]{gohberg1969introduction}, since $\cA_n\to\cA$ in 
$\cB_\infty(\sH)$, we have that $\lambda_k^{(n)} \to \lambda_k$, as $n\to\infty$.

Furthermore, by Lemma~\ref{lemma:Interleaved} below, $0\leq \lambda_k^{(n)} \leq \lambda_k^{(n+1)}$
for all $k,n$. Therefore by the monotone convergence theorem for series
\begin{equation}
\lim\limits_{n\to\infty} \,\sum\limits_{k=1}^\infty  \lambda_k^{(n)}
\,\,=\,\, \sum\limits_{k=1}^\infty  \lambda_k.
\end{equation}
Finally, since $\cA$ is trace class,
\begin{align}
\Tr(\cA) 
\,\,=\,\, \sum\limits_{k=1}^\infty  \lambda_k
\,\,=\,\,\lim\limits_{n\to\infty} \,\sum\limits_{k=1}^\infty  \lambda_k^{(n)}
\,\,=\,\, \lim\limits_{n\to\infty} \Tr(\cA_n)
\,\,=\,\, \sum\limits_{\ell=1}^\infty \| \chi_\ell \|_{\sH}^2, 
\end{align}
as required.
\end{proof}

\begin{lemma}\label{lemma:Interleaved}
Let $\{\chi_\ell\}_{\ell=1}^{n}$ be any sequence of elements of $\sH$ 
and $\cA = \sum\limits_{\ell=1}^n \chi_\ell\otimes \chi_\ell$.
Then, for any $\chi\in\sH$,
the eigenvalues, $\mu_k$, of $\cA + \chi\otimes\chi$ 
are interleaved with the eigenvalues, $\lambda_k$, of $\cA$:
 \begin{equation}\label{eq:Interleaved}
0\leq \cdots \leq \lambda_{k+1} \leq \mu_{k+1}
\leq\lambda_k\leq\mu_k \leq \cdots \leq \lambda_1 \leq \mu_1.
\end{equation}
 \end{lemma}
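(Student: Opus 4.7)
The operators $\cA=\sum_{\ell=1}^n \chi_\ell\otimes\chi_\ell$ and $\cA+\chi\otimes\chi$ are finite-rank, non-negative, self-adjoint operators on $\sH$, so each admits a non-increasing enumeration of its nonzero eigenvalues. My plan is to reduce the statement to the Courant--Fischer min-max principle: for a non-negative self-adjoint compact operator $\cT$ on $\sH$ and for each $k\geq 1$,
\begin{equation}
\lambda_k(\cT)\,\,=\,\,\min_{\substack{W\subset\sH\\ \dim W=k-1}}\,\max_{\substack{v\perp W\\ \|v\|_\sH=1}}\langle v,\cT v\rangle_\sH\,\,=\,\,\max_{\substack{V\subset\sH\\ \dim V=k}}\,\min_{\substack{v\in V\\ \|v\|_\sH=1}}\langle v,\cT v\rangle_\sH.
\end{equation}

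For the lower half $\lambda_k\leq \mu_k$ I would invoke operator monotonicity: since $\chi\otimes\chi\geq 0$, we have $\langle v,(\cA+\chi\otimes\chi)v\rangle_\sH\geq \langle v,\cA v\rangle_\sH$ for every $v\in\sH$, and taking the max-min over $k$-dimensional subspaces on both sides produces the inequality term by term.

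For the upper half $\mu_{k+1}\leq \lambda_k$ I would supply a specific test subspace in the min-max characterization of $\mu_{k+1}$. Let $e_1,\dots,e_{k-1}$ be orthonormal eigenvectors of $\cA$ corresponding to its top $k-1$ eigenvalues, and set $W=\operatorname{span}\{e_1,\dots,e_{k-1},\chi\}$, padded with a further unit vector if $\chi$ already lies in that span so that $\dim W=k$. Any unit vector $v\perp W$ satisfies $\langle\chi,v\rangle_\sH=0$, so $\langle v,(\chi\otimes\chi)v\rangle_\sH=|\langle\chi,v\rangle_\sH|^2=0$, and hence $\langle v,(\cA+\chi\otimes\chi)v\rangle_\sH=\langle v,\cA v\rangle_\sH$. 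Since $v$ is additionally orthogonal to the top $k-1$ eigenvectors of $\cA$, the spectral decomposition of $\cA$ gives $\langle v,\cA v\rangle_\sH\leq \lambda_k$, and plugging this choice of $W$ into the min-max formula yields $\mu_{k+1}\leq \lambda_k$.

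This is essentially the classical Cauchy interlacing theorem specialized to a non-negative rank-one perturbation, and I do not anticipate any substantive obstacle. The only point requiring attention is that $\sH$ may be infinite dimensional: this is harmless because $\cA$ and $\cA+\chi\otimes\chi$ are finite-rank and non-negative, so the non-increasing enumeration of their eigenvalues together with Courant--Fischer apply in the standard form, and the tail of zero eigenvalues renders the left-hand portion of the inequality chain in \eqref{eq:Interleaved} automatic.
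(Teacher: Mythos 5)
Your proof is correct, but it follows a genuinely different route from the paper. The paper restricts $\cA$ and $\cA+\chi\otimes\chi$ to the finite-dimensional subspace $\sF=\operatorname{span}\{\chi_1,\dots,\chi_n,\chi\}$, diagonalizes $\restr{\cA}{\sF}$, and analyzes the characteristic polynomial of the rank-one update via the determinant identity $\det(\mathbf A+\mathbf v\mathbf v^*-\nu\mathbf I)=\det(\mathbf A-\nu\mathbf I)\bigl[1+\mathbf v^*(\mathbf A-\nu\mathbf I)^{-1}\mathbf v\bigr]$, obtaining the secular function $f(\mu)=1+\sum_k\|\mathbf v_k\|^2/(\nu_k-\mu)$ and locating its roots between consecutive eigenvalues by monotonicity; this requires some bookkeeping about which old eigenvalues persist and with what multiplicity. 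You instead get the lower inequalities $\lambda_k\leq\mu_k$ from operator monotonicity of the quadratic form under the nonnegative perturbation $\chi\otimes\chi$ (Weyl), and the upper inequalities $\mu_{k+1}\leq\lambda_k$ by feeding the test subspace $W=\operatorname{span}\{e_1,\dots,e_{k-1},\chi\}$ into the Courant--Fischer min--max characterization, so that on $W^\perp$ the perturbation vanishes and the form of $\cA$ is already capped by $\lambda_k$. Your argument buys brevity and robustness: it avoids the determinant computation and the multiplicity case analysis entirely, it does not actually need $\cA$ to be finite rank (any nonnegative compact self-adjoint $\cA$ plus a rank-one nonnegative perturbation would do), and the infinite-dimensional setting is handled by the standard min--max convention for nonnegative compact operators with the eigenvalue sequence padded by zeros, as you note. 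What the paper's computation buys in exchange is more explicit spectral information --- exactly which eigenvalues of $\cA$ survive, with multiplicity reduced by one, and an equation determining the new eigenvalues --- which is not needed for the interlacing inequality \eqref{eq:Interleaved} itself. The only points to keep straight in your write-up are the convention $(\chi\otimes\chi)v=\langle\chi,v\rangle\chi$, so $\langle v,(\chi\otimes\chi)v\rangle=|\langle\chi,v\rangle|^2$, and that in infinite dimensions the inner extremum in the min--max formula should be read as a supremum, which is harmless for the inequalities you draw from it.
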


The proof is an extension of Exercise 7.1.22 of Meyer~\cite{Meyer}.

\begin{proof}
Let $\sF = \operatorname{Span}\{ \chi_1, \cdots \chi_n, \chi\}$ and let
$m=\dim(\sF)$.
Observe that $\cA : \sF \to \sF$ and  $\cA : \sF^\perp \to \{ 0\}$.
Let $\{f_j\}_j$ be an orthonormal basis of $\sF$ consisting of eigenvectors
of $\cA$ and let $\nu_1 > \nu_2 > \cdots > \nu_L$ denote the distinct eigenvalues
of $\restr{\cA}{\sF}$ and let $m_1, m_2, \cdots, m_L$ be their algebraic 
multiplicities. Then the matrix of $\restr{\cA}{\sF}$  in the basis $\{f_j\}_j$  is 
given by
$\mathbf A = \operatorname{diag}\begin{bmatrix} 
  \nu_1 \mathbf I_{m_1\times m_1} & 
  \nu_2 \mathbf I_{m_1\times m_2} & 
  \cdots &
   \nu_L \mathbf I_{m_1\times m_L} 
   \end{bmatrix}$
and we express the coordinate vector of $\chi$ in this basis
as 
$\mathbf v = \begin{bmatrix} 
  \mathbf v_1^T & 
   \mathbf v_2^T & 
  \cdots &
     \mathbf v_L^T
       \end{bmatrix}^T$
where $\mathbf v_j \in \mathbb C^{m_j}$.
Note that if $\mathbf v_j = \mathbf 0$, then $\nu_j$ is also an eigenvalue of
$\mathbf A+\mathbf v\mathbf v^*$ with the same multiplicity, $m_j$.
Therefore, we may assume that $\mathbf v_j\neq \mathbf 0$ for all $j$ and
it suffices to show that the eigenvalues of $\mathbf A$ and 
$\mathbf A + \mathbf v\mathbf v^*$ are interleaved as in  \eqref{eq:Interleaved}.

To that end, we claim that
\begin{enumerate}
\item If $m_k=1$, then $\nu_k$ is not an eigenvalue of 
$\mathbf A + \mathbf v\mathbf v^*$.
\item If $m_k>1$, then $\nu_k$ is an eigenvalue of 
$\mathbf A + \mathbf v\mathbf v^*$ with multiplicity $m_k-1$. 
\item If $\mu$ is an eigenvalue of 
$\mathbf A + \mathbf v\mathbf v^*$ but not an eigenvalue of 
$\mathbf A$, then
\begin{equation}\label{eq:DiniMuEq}
0 \,\,=\,\, f(\mu) \,\,:=\,\, 1\,\,+\,\, \sum_{k=1}^n \frac{\|\mathbf v_k \|^2}{\nu_k-\mu}.
\end{equation}
\end{enumerate}
To establish these claims, we observe that 
\begin{align}
p(\nu) \,\,&:=\,\, \det(\mathbf A +  \mathbf v\mathbf v^* - \nu\mathbf I)
\,\,=\,\, \det(\mathbf A - \nu\mathbf I) \left[ 1 + \mathbf v^* ( \mathbf A - \nu\mathbf I)^{-1} \mathbf v\right]
\nonumber
\\ \label{eq:DiniClaim3Proof}
&=\,\, \prod_{\ell=1}^L (\nu_\ell-\nu)^{m_\ell} \left[ 1 + \sum_{\ell=1}^L 
\frac{\|\mathbf v_\ell \|^2}{\nu_\ell-\nu} \right].
\end{align}

Fixing $k$, we have 
$p(\nu)= \left[\prod_{\ell\neq k} (\nu_\ell-\nu)^{m_\ell}\right]
( \nu_k-\nu)^{m_k-1} q_k(\nu) $, 
where
\begin{equation}
q_k(\nu) \,\,=\,\, \nu_k - \nu + \| \mathbf v_k  \|^2 
+ \sum_{\ell\neq k} \| \mathbf v_\ell  \|^2  
\,\frac{ \nu_k-\nu}{\nu_\ell-\nu}.
\end{equation}
Claims (1) and (2) now follow, since $q_k(\nu_k)\neq 0$,
and claim (3) follows from \eqref{eq:DiniClaim3Proof}.
Finally, to show that the remaining eigenvalues of $\mathbf A + \mathbf v\mathbf v^*$ are interleaved with those of $\mathbf A$, we observe that
$f: (\nu_{k+1}, \nu_{k}) \to (-\infty, +\infty)$ is 
continuous and increasing. Hence $f$ has  a unique root in $(\nu_{k+1}, \nu_{k})$. A similar conclusion holds on the interval $(\nu_1,+\infty)$.
\end{proof}

We are now in  a position to prove the key result that will be used to 
establish the uniformity of the
 convergence of the eigen-expansion in Mercer's theorem.
 
 \begin{proposition}\label{prop:P2UnifConvgtTrace}
 Let $X \subseteq \mathbb R^m$ be compact
and let  $\cP \in \cB_2(L^2(X;\sH))$ be Hermitian with continuous kernel
$P\in C^0(X\times X; \cB_2(\sH))$. Then
 \begin{equation}\label{eq:P2UnifConvgtTrace}
 \Tr(P^{(2)}(x,x)) \,\,=\,\, \sum\limits_{\ell=1}^\infty
\lambda^2_\ell \, \| \phi_\ell(x) \|^2_\sH,
\end{equation}
where the series on the right-hand side converges uniformly in $x$. 
 \end{proposition}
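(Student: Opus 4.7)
The plan is to combine the preceding propositions with a classical Dini-type argument. First I would fix $x\in X$ and specialize Proposition~\ref{prop:P2PtWiseConvgt} to the diagonal, so that the series $\sum_\ell \lambda_\ell^2 \phi_\ell(x)\otimes \phi_\ell(x)$ converges in $\cB_2(\sH)$ to $P^{(2)}(x,x)$. By Proposition~\ref{prop:P2xxTrace}, $P^{(2)}(x,x)\in\cB_1(\sH)$, so the hypotheses of Proposition~\ref{prop:AbstractTraceFormula} are met with $\chi_\ell = \lambda_\ell\phi_\ell(x)$. Applying that proposition yields the pointwise identity
\begin{equation}
\Tr(P^{(2)}(x,x)) \,\,=\,\, \sum_{\ell=1}^\infty \lambda_\ell^2 \|\phi_\ell(x)\|_\sH^2,
\end{equation}
which is the equality in \eqref{eq:P2UnifConvgtTrace}.

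The second and main task is promoting this pointwise convergence to uniform convergence. For this I would apply Dini's theorem. The partial sums
\begin{equation}
f_n(x) \,\,:=\,\, \sum_{\ell=1}^n \lambda_\ell^2 \|\phi_\ell(x)\|_\sH^2
\end{equation}
form a monotone non-decreasing sequence of non-negative functions on the compact set $X$. Each $f_n$ is continuous, since Proposition~\ref{prop:Cont} shows that $\phi_\ell\in C^0(X;\sH)$ and hence $x\mapsto \|\phi_\ell(x)\|_\sH^2$ is continuous. The limit function $x\mapsto \Tr(P^{(2)}(x,x))$ is also continuous: by Proposition~\ref{prop:P2xxTrace} the map $x\mapsto P^{(2)}(x,x)$ lies in $C^0(X;\cB_1(\sH))$, and $\Tr$ is continuous on $\cB_1(\sH)$ by \eqref{eq:TrB1ineq}. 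Dini's theorem then gives $f_n\to \Tr(P^{(2)}(\cdot,\cdot))$ uniformly on $X$.

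I do not anticipate a serious obstacle here, since all the technical work has already been done in Propositions~\ref{prop:Cont}, \ref{prop:P2xxTrace}, \ref{prop:P2PtWiseConvgt}, and \ref{prop:AbstractTraceFormula}. The only thing to verify carefully is the continuity of $x\mapsto \Tr(P^{(2)}(x,x))$, for which the key point is that $\cB_1(\sH)$-continuity of the diagonal (rather than merely $\cB_2(\sH)$-continuity) is essential, and this is precisely what Proposition~\ref{prop:P2xxTrace} supplies by exploiting that $\cP^2$ is the square of a Hilbert--Schmidt operator and hence has a kernel that is $\cB_1(\sH)$-valued.
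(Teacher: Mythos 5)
Your proposal is correct and follows essentially the same route as the paper's proof: the pointwise identity comes from Propositions~\ref{prop:P2xxTrace}, \ref{prop:P2PtWiseConvgt}, and \ref{prop:AbstractTraceFormula}, and uniformity is obtained from Dini's theorem using the continuity of the partial sums and of $x\mapsto \Tr(P^{(2)}(x,x))$, the latter via the $\cB_1(\sH)$-continuity of the diagonal. No gaps to report.
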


\begin{proof}
 By Propositions~\ref{prop:P2xxTrace}, \ref{prop:P2PtWiseConvgt}, and
 \ref{prop:AbstractTraceFormula}, \eqref{eq:P2UnifConvgtTrace} holds pointwise
 in $x$. Now the partial sums form an increasing sequence of continuous real-valued functions on the compact set, $X$. Furthermore, by 
 Proposition~\ref{prop:P2xxTrace} and the continuity of the functional
 $\Tr: \cB_1(\sH)\to \mathbb R$, the function $ \Tr(P^{(2)}(x,x))$ is 
 continuous. Therefore, the convergence is uniform by Dini's theorem~\cite{rudin1976}.
\end{proof}
 
 \begin{lemma}\label{lemma:PeqR}
 Let $X$ be compact
and suppose that $\cP \geq 0$ has a  
continuous kernel $P\in C^0(X\times X; \cB_2(\sH))$. 
Let $\{(\lambda_\ell, \phi_\ell)\}_\ell$ be the eigenpairs of $\cP$.
If 
\begin{equation}\label{eq:PeqR}
R(x,y) \,\,=\,\, \sum\limits_{\ell=1}^\infty \lambda_\ell \,
\phi_\ell(x)  \otimes \phi_\ell(y)
\end{equation}
converges uniformly in $\cB_2(\sH)$ in either $x$ or $y$, then $P(x,y)=R(x,y)$.
 \end{lemma}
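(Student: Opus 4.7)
The plan is to identify $R$ with $P$ by comparing two modes of convergence of the partial sums $S_n(x,y):=\sum_{\ell=1}^n \lambda_\ell\,\phi_\ell(x)\otimes\phi_\ell(y)$. Since $\cP\ge 0$ is Hermitian, the singular-value decomposition in \eqref{KK1} specializes with $\mu_\ell=\lambda_\ell$ and $\psi_\ell=\phi_\ell$, so Corollary~\ref{cor:KernelExpansion} gives $S_n\to P$ in $L^2(X\times X;\cB_2(\sH))$. By hypothesis, $S_n\to R$ uniformly in $\cB_2(\sH)$ in, say, the variable $y$ for each fixed $x$. Using the identity $S_n(y,x)=S_n(x,y)^*$ from \eqref{prodform} together with the adjoint-invariance of the $\cB_2(\sH)$-norm, the same uniform convergence holds in $x$ for each fixed $y$, with limit $R(y,x)=R(x,y)^*$. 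Combined with the continuity of the eigenfunctions $\phi_\ell$ from Proposition~\ref{prop:Cont} and the identity \eqref{normab}, this shows that $R(x,\cdot)$ and $R(\cdot,y)$ are continuous on the compact set $X$ for every $x$ and $y$.

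Next, the uniform-in-one-variable convergence yields $L^2$-in-one-variable convergence: for every $x\in X$,
\begin{equation*}
\|S_n(x,\cdot)-R(x,\cdot)\|_{L^2(X;\cB_2(\sH))}\to 0.
\end{equation*}
Simultaneously, applying Fubini to the $L^2(X\times X;\cB_2(\sH))$-convergence $S_n\to P$ produces a subsequence $S_{n_k}$ such that $S_{n_k}(x,\cdot)\to P(x,\cdot)$ in $L^2(X;\cB_2(\sH))$ for almost every $x$. Uniqueness of the $L^2$-limit forces $R(x,\cdot)=P(x,\cdot)$ a.e.\ in $y$ for a.e.\ $x$, and the continuity of both sides in $y$ promotes this to $R(x,y)=P(x,y)$ for every $y\in X$ and every $x$ in a full-measure subset $E\subset X$. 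To remove the exceptional $x$, fix $y\in X$: the functions $R(\cdot,y)$ and $P(\cdot,y)$ are continuous on $X$ and coincide on the dense subset $E$, hence they agree for every $x\in X$.

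The main technical obstacle is the careful bookkeeping between uniform-in-one-variable, $L^2$-in-one-variable, and $L^2$-on-the-product convergence; the key conceptual step is the adjoint-symmetry bootstrap that propagates the single-slot uniform hypothesis into separate continuity of $R$ in both slots, after which the density-and-continuity closure is routine.
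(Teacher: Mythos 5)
Your proof is correct, but it takes a genuinely different route from the paper's. The paper's argument stays at the ``for every $x$'' level throughout: it first proves, for any continuous Hermitian kernel $Q$, the identity $\int_X\|Q(x,y)\|^2_{\cB_2(\sH)}\,dy=\Tr\bigl(Q^{(2)}(x,x)\bigr)$, applies it to $Q=P-P_n$ with $P_n(x,y)=\sum_{\ell=1}^n\lambda_\ell\,\phi_\ell(x)\otimes\phi_\ell(y)$ to obtain $\int_X\|P(x,y)-P_n(x,y)\|^2_{\cB_2(\sH)}\,dy=\Tr\bigl(P^{(2)}(x,x)\bigr)-\sum_{\ell=1}^n\lambda_\ell^2\|\phi_\ell(x)\|^2_\sH$, and then invokes Proposition~\ref{prop:P2UnifConvgtTrace} (hence the interleaving Lemma~\ref{lemma:Interleaved} and Dini's theorem) to make the right-hand side vanish uniformly in $x$; combined with the uniform hypothesis on \eqref{eq:PeqR} this gives $\int_X\|P(x,y)-R(x,y)\|^2_{\cB_2(\sH)}\,dy=0$ for \emph{every} $x$, and continuity in $y$ finishes. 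You bypass Propositions~\ref{prop:P2xxTrace}--\ref{prop:P2UnifConvgtTrace} entirely: you use only the $L^2(X\times X;\cB_2(\sH))$ expansion of Theorem~\ref{thm:KernelExpansion}/Corollary~\ref{cor:KernelExpansion}, extract a subsequence along which the $y$-slices of the partial sums converge in $L^2(X;\cB_2(\sH))$ to $P(x,\cdot)$ for a.e.\ $x$, identify these limits with $R(x,\cdot)$ by uniqueness of $L^2$-limits, and then remove the exceptional null set of $x$'s via the adjoint-symmetry observation $R(y,x)=R(x,y)^*$ (which makes $R$ separately continuous) together with a density argument in $x$. What each approach buys: the paper's route gives the slice identity for every $x$ directly and reuses machinery it needs anyway for the trace formula in Theorem~\ref{thm:MercerB}, whereas yours is more elementary and has far fewer dependencies, at the price of an extra ``continuous functions agreeing on a full-measure subset of $X$ agree everywhere'' step; note that this step implicitly requires every nonempty relatively open subset of $X$ to have positive Lebesgue measure, but the paper's own final sentence (upgrading a.e.-in-$y$ equality to all $y$ by continuity) rests on the same implicit assumption, so you are on equal footing there.
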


\begin{proof}
First, let $Q\in C^0(X\times X; \cB_2(\sH))$ be any Hermitian kernel. Let
$\{ h_k\}_k$ be an orthonormal basis for $\sH$. Then by \eqref{eq:HSnormViaONB},
the standard monotone convergence theorem, Remarks~\ref{rem:AdjointHS}
and \ref{rem:compositionHS}, \eqref{eq:SwapIntInnerProd} and 
\eqref{eq:DefTrace},
\begin{align}
\int_X \| Q(x,y) \|^2_{\cB_2(\sH)}\, dx 
\,\,&=\,\,
\sum\limits_{k=1}^\infty \int_X \| Q(x,y)\, h_k \|^2_{\sH}\, dx 
\nonumber
\\
\,\,&=\,\,
\sum\limits_{k=1}^\infty \int_X \langle Q(y,x)\circ Q(x,y)\, h_k, h_k \rangle_{\sH}\, dx 
\nonumber
\\
\,\,&=\,\,
\sum\limits_{k=1}^\infty  \langle Q^{(2)}(y,y)\, h_k, h_k \rangle_{\sH}
\,\,=\,\, 
\operatorname{Tr}(Q^{(2)}(y,y)).
\end{align}
Similarly, $\int_X \| Q(x,y) \|^2_{\cB_2(\sH)}\, dy =  \| Q^{(2)}(x,x)\|^2_{\cB_2(\sH)}$.
Consequently, 
\begin{equation}
\int_X \| \,P(x,y) \,\,-\,\, \sum\limits_{\ell=1}^n
\lambda_\ell\, \phi_\ell(x) \otimes \phi_\ell(y)\, \|^2_{\cB_2(\sH)}\, dy 
\,\,=\,\,
\Tr(P^{(2)}(x,x)) -  \sum\limits_{\ell=1}^n \lambda_\ell^2 \| \phi_\ell(x)\|_\sH^2
\end{equation}
converges to zero uniformly in $x$ as $n\to\infty$, by Proposition~\ref{prop:P2UnifConvgtTrace}.
Therefore, since \eqref{eq:PeqR} converges uniformly in $y$, 
 the uniform convergence theorem for integrals implies that
\begin{equation}
\int_X \| \,P(x,y) - R(x,y) \, \|^2_{\cB_2(\sH)}\, dy = 0
\end{equation}
for all $x$. Consequently, $P(x,y) = R(x,y)$ for almost all $y$. 
Finally, since $P$ and $R$ are both continuous, $P(x,y)=R(x,y)$ for all $x,y$.
\end{proof}

The following result shows that the eigen-expansion for the kernel $P(x,y)$
converges uniformly.

\begin{proposition}\label{prop:PkernelUnifCauchy}
Let $X$ be compact
and suppose that $\cP \geq 0$ has a  
continuous kernel $P\in C^0(X\times X; \cB_2(\sH))$
and that $\exists M<\infty$ so that for all $x\in X$
$\|P(x,x)\|_{\cB_1(\sH)} \leq M$.
Then for each $x\in X$, 
\begin{equation}\label{eq:PkernelUnifCauchy}
P_n(x,y)\,\,=\,\, \sum\limits_{\ell=1}^n \lambda_\ell \,\phi_\ell(x) \otimes \phi_\ell(y)
\end{equation}
is uniformly Cauchy with respect to $y$ in the $\cB_2(\sH)$-norm and hence
converges uniformly to $P(x,y)$. 
\end{proposition}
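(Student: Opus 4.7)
The plan is to establish the uniform Cauchy property of $P_n(x,\cdot)$ in the $\cB_2(\sH)$-norm by a Cauchy--Schwarz splitting of the tail sum, with the $y$-dependence controlled by the new boundedness hypothesis and the $x$-dependence controlled by pointwise convergence of an auxiliary series. Once the uniform Cauchy property is in hand, completeness of $\cB_2(\sH)$ produces a uniform limit, and Lemma~\ref{lemma:PeqR} identifies this limit with $P(x,y)$.

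Concretely, I would fix $x\in X$ and integers $m>n$, and estimate the tail
\begin{equation*}
\left\| \sum_{\ell=n+1}^{m} \lambda_\ell\, \phi_\ell(x)\otimes\phi_\ell(y) \right\|_{\cB_2(\sH)}
\,\,\leq\,\,
\left[ \sum_{\ell=n+1}^{m} \lambda_\ell \|\phi_\ell(x)\|_\sH^2 \right]^{1/2}
\left[ \sum_{\ell=n+1}^{m} \lambda_\ell \|\phi_\ell(y)\|_\sH^2 \right]^{1/2},
\end{equation*}
using the triangle inequality, the rank-one norm identity \eqref{normab}, the nonnegativity $\lambda_\ell\geq 0$ coming from $\cP\geq 0$, and the Cauchy--Schwarz inequality applied to the pairs $\sqrt{\lambda_\ell}\|\phi_\ell(x)\|_\sH$ and $\sqrt{\lambda_\ell}\|\phi_\ell(y)\|_\sH$. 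The decisive observation is that the bound \eqref{eq:OperatorPTraceClass1} already established in the proof of Theorem~\ref{thm:MercerA}, combined with $\Tr(P(z,z)) \leq \|P(z,z)\|_{\cB_1(\sH)} \leq M$, yields
\begin{equation*}
\sum_{\ell=1}^{\infty} \lambda_\ell \|\phi_\ell(z)\|_\sH^2 \,\,\leq\,\, M \qquad \text{for every } z\in X.
\end{equation*}
In particular, the second factor above is dominated by $\sqrt{M}$ uniformly in $y$, while the first factor is the tail of a convergent nonnegative series at the fixed point $x$, and therefore tends to zero as $n\to\infty$.

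Combining these two facts gives, for each fixed $x$ and each $\varepsilon>0$, an index $N$ such that $\sup_{y\in X}\|P_m(x,y)-P_n(x,y)\|_{\cB_2(\sH)} < \varepsilon$ whenever $m>n\geq N$, which is the desired uniform Cauchy property in $y$. Completeness of $\cB_2(\sH)$ produces a uniform limit $R(x,\cdot)\in C^0(X;\cB_2(\sH))$, and Lemma~\ref{lemma:PeqR} then forces $R(x,y)=P(x,y)$. No serious analytic obstacle arises; the only conceptual point worth highlighting is precisely where the strengthened hypothesis $\|P(z,z)\|_{\cB_1(\sH)}\leq M$ enters, namely in producing the $y$-uniform bound on the second Cauchy--Schwarz factor. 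Under the weaker $L^1$-integrability hypothesis of Theorem~\ref{thm:MercerA} such a uniform bound is unavailable, which is consistent with Theorem~\ref{thm:MercerA} only yielding the trace class conclusion and not the uniform convergence of the eigen-expansion.
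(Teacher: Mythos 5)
Your proof is correct and follows essentially the same route as the paper: the same Cauchy--Schwarz splitting of the tail into an $x$-factor and a $y$-factor, with the $y$-factor bounded by $\Tr(P(y,y))\leq M$ via \eqref{eq:OperatorPTraceClass1} and the boundedness hypothesis, the $x$-factor handled by tail convergence at fixed $x$, and the limit identified with $P(x,y)$ through Lemma~\ref{lemma:PeqR}. The only cosmetic difference is that you reach the product bound by the triangle inequality and \eqref{normab} rather than the paper's orthonormal-basis expansion of the $\cB_2(\sH)$-norm, which yields the same estimate.
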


\begin{proof}
Let $\{h_k\}_k$ be an orthonormal basis for $\sH$ and $n>m$. Then,
\begin{align}
& \| P_n(x,y) - P_m(x,y) \|_{\cB_2(\sH)}^2 
\\
\,\,&=\,\,
\sum\limits_{k=1}^\infty \| (P_n(x,y) - P_m(x,y))h_k \|_{\sH}^2 
\nonumber
\\
\,\,&=\,\,
\sum\limits_{k=1}^\infty \left\| \sum\limits_{\ell=m+1}^n \lambda_\ell \,
\langle \phi_\ell(y),h_k\rangle_\sH\, \phi_\ell(x) \right \|_\sH^2
\nonumber
\\
\,\,&\leq\,\,
\sum\limits_{k=1}^\infty \left( \sum\limits_{\ell=m+1}^n  \lambda_\ell 
| \langle \phi_\ell(y),h_k\rangle_\sH |^2\right) \, 
\left( \sum\limits_{\ell=m+1}^n \lambda_\ell  \| \phi_\ell(x) \|_\sH^2 \right)
\nonumber
\\
\,\,&=\,\,
 \left( \sum\limits_{\ell=m+1}^n  \lambda_\ell \sum\limits_{k=1}^\infty
| \langle \phi_\ell(y),h_k\rangle_\sH |^2\right) \, 
\left( \sum\limits_{\ell=m+1}^n \lambda_\ell  \| \phi_\ell(x) \|_\sH^2 \right)
\nonumber
\\
\,\,&=\,\,
 \left( \sum\limits_{\ell=m+1}^n  \lambda_\ell 
  \| \phi_\ell(y) \|_\sH^2 \right)
\left( \sum\limits_{\ell=m+1}^n \lambda_\ell  \| \phi_\ell(x) \|_\sH^2 \right)
\nonumber
\\
\,\,&\leq\,\,
\left( \sum\limits_{\ell=m+1}^n  \lambda_\ell 
  \| \phi_\ell(x) \|_\sH^2 \right)  \Tr(P(y,y)),
\end{align}
where the first inequality follows from the Cauchy-Schwarz inequality
for $a_\ell\in\mathbb C$ and $\phi_\ell\in\sH$ given by
\begin{equation}
\left\| \,\sum\limits_{\ell=1}^n a_\ell \phi_\ell \,\right\|_\sH 
\,\,\leq\,\,
\left( \sum\limits_{\ell=1}^n |a_\ell |^2\right)^{1/2}\,
\left( \sum\limits_{\ell=1}^n \|\phi_\ell |_\sH^2\right)^{1/2},
\end{equation}
and the second inequality follows from~\eqref{eq:OperatorPTraceClass1}.

We note that $\sum\limits_{\ell=1}^\infty \lambda_\ell \| \phi_\ell \|_\sH^2 < \infty$
for each $x$ by \eqref{eq:OperatorPTraceClass1}. 
Fixing $x\in X$, let $\epsilon>0$ and choose $N=N(\epsilon,x)$ so that
$\sum\limits_{\ell=m+1}^n  \lambda_\ell  \| \phi_\ell(x) \|_\sH^2  < \epsilon$.
Since $\Tr(P(y,y)) \leq M$ is uniformly bounded by assumption, 
we conclude that
$\| P_n(x,y) - P_m(x,y) \|_{\cB_2(\sH)} \leq (\epsilon M)^{1/2}$,
which proves that \eqref{eq:PkernelUnifCauchy} is uniformly Cauchy in $y$
and hence converges uniformly to some kernel, $R(x,y)$.
The fact that $R(x,y)=P(x,y)$ now follows by Lemma~\ref{lemma:PeqR}.
\end{proof}

To complete the proof of Mercer's theorem, we show that~\eqref{eq:MercerTraceFormula} holds.

\begin{proof}[Completion of proof of Mercer's Theorem~\ref{thm:MercerB}]
Since $P(x,x)\in\cB_1(\sH)$ for all $x$ and since the eigen-expansion \eqref{eq:Mercer} for $P(x,x)$ converges pointwise in $\cB_2(\sH)$, Proposition~\ref{prop:AbstractTraceFormula} implies that
$\Tr(P(x,x)) = \sum\limits_{\ell=1}^\infty \lambda_\ell\, \|\phi_\ell(x)\|_\sH^2$.
Finally, since $|\Tr(P(x,x))| \leq \| P(x,x) \|_{\cB_1(\sH)}$, the 
Dominated Convergence Theorem yields
$\int_X  \Tr(P(x,x))\, dx = \sum\limits_{\ell=1}^\infty  \lambda_\ell
= \Tr(\cP)$, as required.
\end{proof}

\section{The case of general operator-valued kernels on a compact set}
\label{sec:OperatorValued}

In this section we treat the case of an arbitrary kernel, that is we drop
the assumption that the operator is nonnegative definite Hermitian.  
We begin with a definition and the statement of the main theorem.

\begin{definition}
We let $C^{0,\gamma}(X\times X; \cB_2(\sH))\subset L^2(X\times X; \cB_2(\sH))$ denote the space
of operator-valued  H\"older continuous kernels, that is kernels, $K$, for which 
there is a  constant $C>0$, so that for
all $x_{1}, x_2,y_1,y_2\in X$,
\begin{equation}\label{eq:holdconddef}
\|K(x_1,y_1)-K(x_2,y_2)\|_{\cB_2(\sH)}\le C\|(x_1,y_1)-(x_2,y_2)\|_{\bbR^{2m}}^\gamma.
\end{equation}
\end{definition}

\begin{theorem}\label{thm:TraceClassInfDimCpt}
Suppose that $X\subseteq \mathbb R^m$ is compact and that 
$K\in C^{0,\gamma}(X\times X; \cB_2(\sH))$ is an operator-valued 
H\"older continuous kernel with  H\"older exponent, $\gamma\in(1/2,1]$.
Let $\cK\in\cB_2(L^2(X;\sH))$ be the 
Hilbert-Schmidt operator associated with $K$. 
In addition, if $\dim\sH = \infty$ suppose
  that  the kernel of $\cP := (\cK\cK^*)^{1/2}$ satisfies
\begin{equation}\label{eq:TraceClassInfDimCptCond}
P \in L^\infty(X\times X;\cB_1(\sH)).
\end{equation}
Then
$\cK\in\cB_1(L^2(X;\sH))$ is trace class.
\end{theorem}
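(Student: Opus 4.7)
The strategy is to deduce the conclusion from the generalized Mercer's Theorem~\ref{thm:MercerA} applied to the symmetrization $\cP = (\cK\cK^*)^{1/2}$. Since $\cP$ is nonnegative Hermitian and its eigenvalues coincide with the singular values $\mu_\ell$ of $\cK$, the claim that $\cK \in \cB_1(L^2(X;\sH))$ is equivalent to $\cP\in\cB_1(L^2(X;\sH))$. Noting that $\cP$ is already Hilbert-Schmidt, with $\|\cP\|_{\cB_2}^2 = \sum_\ell \mu_\ell^2 = \|\cK\|_{\cB_2}^2 < \infty$, it therefore suffices to verify the two hypotheses of Theorem~\ref{thm:MercerA}: first, that the kernel $P$ admits a continuous representative in $C^0(X\times X;\cB_2(\sH))$; and second, that $P(x,x) \in \cB_1(\sH)$ for every $x\in X$ with $x\mapsto \|P(x,x)\|_{\cB_1(\sH)}$ integrable on the diagonal $\Delta$.

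I would dispose of the second hypothesis first, granting the continuity conclusion. The standing assumption $P\in L^\infty(X\times X;\cB_1(\sH))$ furnishes a set $E\subseteq X\times X$ of full Lebesgue measure on which $\|P(x,y)\|_{\cB_1(\sH)}\leq C$. For any $x_0\in X$, choose $(x_n,y_n)\in E$ with $(x_n,y_n)\to (x_0,x_0)$. Banach-Alaoglu applied to the closed ball of radius $C$ in $\cB_1(\sH)\cong \cB_\infty(\sH)^*$ yields a weak$^*$-convergent subsequence whose limit must coincide with $P(x_0,x_0)$ by uniqueness, after comparing with the $\cB_2$-limit supplied by continuity. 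Weak$^*$ lower semicontinuity of the dual norm then gives $\|P(x_0,x_0)\|_{\cB_1(\sH)}\leq C$ pointwise, and integrability along $\Delta$ is immediate since $X$ is compact.

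The substantial work is in producing the continuous representative of $P$, and this is where the H\"older hypothesis enters. The plan is to adapt Weidmann's scalar argument to $\cB_2(\sH)$-valued kernels. The H\"older estimate on $K$ translates, via Theorem~\ref{thm:HSopB2ker}, into a Hilbert-Schmidt-norm estimate of the form
\begin{equation}
\|\cK\tau_h - \cK\|_{\cB_2(L^2(X;\sH))}^2 \,\le\, C|h|^{2\gamma},
\end{equation}
and a symmetric estimate for $\tau_h \cK - \cK$, where $\tau_h$ denotes translation by $h\in\bbR^m$ acting on a suitable extension of elements of $L^2(X;\sH)$ across the boundary of $X$. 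Expanding both sides in the singular value basis of Corollary~\ref{cor:KernelExpansion} and applying Bessel's inequality in the Bochner Hilbert space $L^2(X;\sH)$ converts this into a quantitative modulus-of-continuity bound for the eigenfunctions $\psi_\ell$ and $\phi_\ell$. A Hardy-Littlewood type lemma then turns an $L^2$-modulus of continuity of order $|h|^\gamma$ with $\gamma>1/2$ into absolute, uniform summability of the partial sums
\begin{equation}
P_n(x,y) \,=\, \sum_{\ell=1}^n \mu_\ell\,\psi_\ell(x)\otimes\psi_\ell(y)
\end{equation}
in the $\cB_2(\sH)$-norm, and hence gives the sought-for continuous representative of $P$.

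The chief obstacle lies in this last step. The square-root operation $\cP = (\cK\cK^*)^{1/2}$ does not preserve H\"older regularity of the kernel in any straightforward way, so it is not possible to argue by propagating smoothness from $K$ to $P$ directly. The Weidmann--Hardy--Littlewood machinery must instead be carried through the singular value expansion, and adapting it from the scalar to the operator-valued setting requires replacing pointwise estimates by $\cB_2(\sH)$-norm estimates throughout. The strict inequality $\gamma > 1/2$ is used precisely at the point where the Bernstein-type summability argument is invoked, which is why the theorem fails at the endpoint $\gamma = 1/2$ (as Bernstein's classical counterexample mentioned in the introduction already illustrates in the scalar case).
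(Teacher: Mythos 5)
Your reduction to Mercer's Theorem~\ref{thm:MercerA} is the right frame, and your treatment of the diagonal $\cB_1$-bound (weak$^*$ compactness of the $\cB_1$-ball plus lower semicontinuity of the trace norm against the $\cB_2$-limit) is fine --- it is essentially a cleaner variant of the Fatou-on-singular-values argument the paper uses at the end of Theorem~\ref{thm:Weidmann}. The problem is the central step: producing the continuous representative of $P$. You correctly observe that H\"older regularity does not pass from $K$ to $P$ in a pointwise way, but your substitute --- expand in the singular value basis, apply Bessel, then invoke ``a Hardy--Littlewood type lemma'' turning the $L^2$-modulus of continuity of order $\gamma>1/2$ into absolute, uniform summability of $P_n(x,y)=\sum_{\ell\le n}\mu_\ell\,\psi_\ell(x)\otimes\psi_\ell(y)$ --- is not a proof. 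No such lemma exists for general orthonormal systems: Bernstein/Hardy--Littlewood absolute-convergence theorems are specific to trigonometric expansions, where frequency grouping is available, and the only eigenfunction bound Bessel gives you here is $\|\psi_\ell(x)-\psi_\ell(x')\|_\sH\le C\mu_\ell^{-1}|x-x'|^\gamma$, which is useless for summing the series. Worse, your intermediate claim is at least as strong as the theorem itself: absolute uniform summability would give $\sum_\ell\mu_\ell\|\psi_\ell(x)\|_\sH^2$ bounded, and integrating over $X$ yields $\sum_\ell\mu_\ell<\infty$ directly, so the unnamed lemma simply begs the question. It also cannot hold in the form you state it: the constant kernel $\cK=\sum_n \tfrac1n e_n\otimes e_{n+1}$ of the paper's example has modulus of continuity zero, yet $\sum_\ell\mu_\ell\|\psi_\ell(x)\|^2=\sum 1/\ell$ diverges, so any such statement must use the hypothesis $P\in L^\infty(X\times X;\cB_1(\sH))$, which your continuity step never touches. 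Finally, the logical order is backwards: uniform convergence of the eigen-expansion of $P$ is essentially the conclusion of Mercer Part~II (Theorem~\ref{thm:MercerB}), which itself requires the continuity of $P$ as input.

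What actually closes the gap in the paper is a different, more elementary mechanism. Proposition~\ref{prop:intKintP} shows that the \emph{integrated} modulus of continuity transfers from $K$ to $P$ with equality, $\int_X\|K(x,y)-K(x',y)\|^2_{\cB_2(\sH)}dy=\int_X\|P(x,y)-P(x',y)\|^2_{\cB_2(\sH)}dy=\sum_\ell\mu_\ell^2\|\psi_\ell(x)-\psi_\ell(x')\|^2_\sH$, because both expansions involve the same $\mu_\ell,\psi_\ell$ and an orthonormal family in the $y$-variable. Then Theorem~\ref{thm:Weidmann} upgrades these integrated H\"older bounds (with $p=2$, $q=2\gamma>1$) to a continuous representative $P_0$ by mollifying with local averages $P_r$, proving equicontinuity from \eqref{eq:Weidmann1}--\eqref{eq:Weidmann2}, getting uniform boundedness from the Hardy--Littlewood maximal function, and applying Arzel\`a--Ascoli together with the Lebesgue differentiation theorem; the role of Hardy--Littlewood here is the maximal theorem, not a Bernstein-type summability result. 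If you want to salvage your outline, you must either prove your summability lemma (which, per the counterexample, you cannot without the $\cB_1$ hypothesis, and which would in any case amount to reproving the theorem) or replace that step by an averaging/equicontinuity argument of the Weidmann--Brislawn type applied to $P$ after transferring the regularity via the identity above.
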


The proof of the theorem, which is based on the proof for scalar-valued kernels
given by Weidmann~\cite{weidmann1966integraloperatoren}, proceeds 
via the following results. 

First, by Corollary~\ref{cor:KernelExpansion}, the eigen-expansion 
\begin{equation}\label{eq:TCKkernelExpansion}
K(x,y) \,\,=\,\, \sum\limits_{\ell=1}^\infty \mu_\ell \,\psi_\ell(x) \otimes \phi_\ell(y)
\end{equation}
for the kernel of $\cK$
converges in $L^2(X\times X;\cB_2(\sH))$.
Formally then, the kernel for the operator, $\cP$, is given by
\begin{equation}\label{eq:TCPkernelExpansion}
P(x,y) \,\,=\,\, \sum\limits_{\ell=1}^\infty \mu_\ell \,\psi_\ell(x) \otimes \psi_\ell(y).
\end{equation}

\begin{proposition}
The eigen-expansion~\eqref{eq:TCPkernelExpansion} converges in 
$L^2(X\times X;\cB_2(\sH))$ and $P$ is the kernel for $\cP$.
\end{proposition}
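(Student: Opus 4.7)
The plan is to obtain this result as a direct application of Corollary~\ref{cor:KernelExpansion} to the nonnegative self-adjoint operator $\cP=(\cK\cK^*)^{1/2}$ itself, rather than re-deriving everything from scratch. First I would observe that $\cP$ is Hilbert-Schmidt: since $\cP$ is nonnegative self-adjoint, its singular values coincide with its eigenvalues, and by \eqref{KK1} the eigenvalues of $\cP$ are precisely the singular values $\{\mu_\ell\}$ of $\cK$. Hence, by \eqref{eq:HSnormViaSingValues}, $\|\cP\|^2_{\cB_2(L^2(X;\sH))}=\sum_\ell\mu_\ell^2=\|\cK\|^2_{\cB_2(L^2(X;\sH))}<\infty$, so $\cP\in\cB_2(L^2(X;\sH))$.

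Next I would identify the spectral data of $\cP$. The vectors $\{\psi_\ell\}_{\ell=1}^\infty$ form an orthonormal basis for $(\ker\cK\cK^*)^\perp=(\ker\cP)^\perp$, and from $\cK\cK^*\psi_\ell=\mu_\ell^2\psi_\ell$ it follows by the functional calculus that $\cP\psi_\ell=\mu_\ell\psi_\ell$. Thus $\{\psi_\ell\}$ plays the role of \emph{both} the $\psi$-basis and the $\phi$-basis in \eqref{KK1} when that setup is applied to the self-adjoint operator $\cP$ in place of $\cK$.

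Applying Corollary~\ref{cor:KernelExpansion} (or directly re-running the Cauchy argument in the proof of Theorem~\ref{thm:KernelExpansion}) to $\cP$, the partial sums
\begin{equation}
\cP_n \,\,:=\,\, \sum_{\ell=1}^n \mu_\ell\, \psi_\ell\otimes\psi_\ell
\end{equation}
converge to $\cP$ in the $\cB_2(L^2(X;\sH))$-norm, because the same computation as in \eqref{KNM}--\eqref{B2KMN}, together with the orthonormality of $\{\psi_\ell\}$ and \eqref{prodform}, gives
\begin{equation}
\|\cP_n-\cP_m\|^2_{\cB_2(L^2(X;\sH))}\,\,=\,\,\sum_{\ell=m+1}^n\mu_\ell^2,
\end{equation}
which is the tail of the convergent series $\sum\mu_\ell^2$.

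Finally, Theorem~\ref{thm:HSopB2ker} gives the isometric identification between Hilbert-Schmidt operators on $L^2(X;\sH)$ and their kernels in $L^2(X\times X;\cB_2(\sH))$. Under this isometry, $\cP_n$ corresponds to the kernel $P_n(x,y)=\sum_{\ell=1}^n\mu_\ell\,\psi_\ell(x)\otimes\psi_\ell(y)$, and the unique kernel associated with $\cP$ is (by definition) $P$. Hence the $L^2(X\times X;\cB_2(\sH))$-Cauchyness of $\{P_n\}$ follows from that of $\{\cP_n\}$, and $P_n\to P$ in $L^2(X\times X;\cB_2(\sH))$. This is the claimed convergence of \eqref{eq:TCPkernelExpansion}, and it simultaneously certifies that $P$ is the kernel of $\cP$. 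No real obstacle arises here; the proposition is essentially a bookkeeping consequence of Corollary~\ref{cor:KernelExpansion} together with the observation that $\cP$ is self-adjoint Hilbert-Schmidt with the same singular values as $\cK$.
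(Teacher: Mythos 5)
Your proof is correct, but it follows a different route from the paper's. The paper proves the proposition by a bare-hands computation on the kernel side: it expands $\| P_n-P_m\|^2_{L^2(X\times X;\cB_2(\sH))}$ using the orthonormal-basis formula \eqref{eq:HSInnerProductViaONB} for the $\cB_2(\sH)$ inner product, uses the orthonormality of $\{\psi_\ell\}$ in $L^2(X;\sH)$ to kill the cross terms, obtains the tail $\sum_{\ell=m+1}^n\mu_\ell^2$ exactly as you do on the operator side, and only then invokes Theorem~\ref{thm:HSopB2ker} (rather tersely) to identify the limit with the kernel of $\cP$. You instead observe that $\cP=(\cK\cK^*)^{1/2}$ is itself a Hilbert--Schmidt operator whose singular value decomposition is explicitly $(\mu_\ell,\psi_\ell,\psi_\ell)$ --- using that $\cP\geq 0$ is self-adjoint, that its eigenvalues are the $\mu_\ell$ by definition, and that $\cP\psi_\ell=\mu_\ell\psi_\ell$ by the functional calculus --- and then apply Theorem~\ref{thm:KernelExpansion}/Corollary~\ref{cor:KernelExpansion} directly to $\cP$. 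Both arguments rest on the same underlying Cauchy estimate, but yours buys economy and a cleaner identification step: since Theorem~\ref{thm:KernelExpansion} already asserts that the singular-value expansion converges in $L^2(X\times X;\cB_2(\sH))$ to the kernel of the operator it expands, the statement ``$P$ is the kernel for $\cP$'' comes for free, whereas the paper has to re-run the computation and then appeal to the isometry of Theorem~\ref{thm:HSopB2ker} for the identification. The small price you pay is the need to verify the spectral bookkeeping for $\cP$ (that $\{\psi_\ell\}$ is an orthonormal basis of $(\ker\cP)^\perp$ and serves as both left and right singular family), which you do correctly; the paper's version avoids any explicit discussion of the spectral decomposition of $\cP$ at the cost of duplicating the estimate.
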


\begin{proof}
Let $P_n$ be the $n$-th partial sum of the series in~\eqref{eq:TCPkernelExpansion}. Then, letting $\{h_j\}$ be an orthonormal
basis for $\sH$, by \eqref{eq:HSInnerProductViaONB} we have that
\begin{align}
&\| P_n-P_m \|^2_{L^2(X\times X;\cB_2(\sH))} 
\nonumber \\
&\,\,=\,\,
\sum\limits_{k,\ell=m+1}^{n}\,\, \iint\limits_{X\times X}\langle \mu_\ell \psi_\ell(x) \otimes \psi_\ell(y)\, ,\, \mu_k \psi_k(x) \otimes \psi_k(y) \rangle_{\cB_2(\sH)}\, dxdy
\nonumber \\
&\,\,=\,\,
\sum\limits_{k,\ell=m+1}^{n}\,\, \iint\limits_{X\times X}
\mu_\ell  \mu_k
\sum\limits_j 
\langle  (\psi_\ell(x) \otimes \psi_\ell(y))h_j,\, 
(\psi_k(x) \otimes \psi_k(y))h_j \rangle_{\sH}\, dxdy
\nonumber \\
&\,\,=\,\,
\sum\limits_{k,\ell=m+1}^{n}\,\, 
\mu_\ell  \mu_k
\int_X \langle \psi_\ell(x),\psi_k(x)\rangle_\sH \,dx
\int_X \sum\limits_j \langle  \psi_k(y),h_j\rangle_\sH
\langle h_j,\psi_\ell(y)\rangle_\sH \,dy
\nonumber \\
&\,\,=\,\, 
\sum\limits_{\ell=m+1}^{n}\mu_\ell^2.
\end{align}
Therefore $P_n$ is Cauchy and the eigen-expansion converges in 
$L^2(X\times X;\cB_2(\sH))$ as $\cK$ is Hilbert-Schmidt.
Finally, $P$ is the kernel for $\cP$ by Theorem~\ref{thm:HSopB2ker}.
\end{proof}

\begin{remark}
Even if the kernel, $K$, is continuous, there is no guarantee that $P$
is continuous. Consequently, even in the case of scalar-valued kernels,
we cannot apply  Mercer's Theorem to the operator $\cP$. 
Indeed, a priori we do not have any control of the uniform (or even pointwise) convergence of the series \eqref{eq:TCPkernelExpansion}. Consequently,
even though each term in the series is continuous, we cannot conclude that 
$P$  is  continuous.
Nevertheless,  Weidmann~\cite{weidmann1966integraloperatoren} 
proved that if  a scalar-valued kernel, $K$, satisfies the H\"older continuity
condition \eqref{eq:holdconddef}, 
 then  there is a continuous kernel, $P_0$, so that $P(x,y) =P_0(x,y)$ for almost all $(x,y)\in X\times X$. 
This result enabled Weidmann 
 to  apply Mercer's Theorem to conclude that $\cP$ is trace class
and hence so is $\cK$. 
In the case that $\sH$ is infinite dimensional, to apply Mercer's theorem
we also need to impose the condition \eqref{eq:TraceClassInfDimCptCond}.
The following example motivates the need for this condition.
We have not been able to replace \eqref{eq:TraceClassInfDimCptCond}
 by a condition directly on $K(x,y)$.  
\end{remark}

\begin{example}
Building on Example~\ref{ex:MercerCounterExample}, let $\cK \in 
\cB_2(L^2([0,1];\ell_2))$ be given by
\begin{equation}
\cK \,\,=\,\,\sum\limits_{n=1}^\infty \frac 1n e_n \otimes e_{n+1}.
\end{equation}
Since the kernel for $\cK$ is a constant, independent of $(x,y)$, 
the condition~\eqref{eq:holdconddef} holds. 
However, since
 $\cP = (\cK\cK^*)^{1/2}$ is given by \eqref{eq:MercerCounterExample}, 
 the singular values
of $\cK$ are $\mu_n = \frac 1n$. Consequently, $\cK \notin \cB_1(L^2([0,1];\ell_2))$.
\end{example}

\begin{proposition}\label{prop:intKintP}
With notation as above
\begin{equation}
\int_X \| K(x,y) - K(x',y)\|^2_{\cB_2(\sH)}\, dy 
\,\,=\,\,
\int_X \| P(x,y) - P(x',y)\|^2_{\cB_2(\sH)}\, dy.
\end{equation}
\end{proposition}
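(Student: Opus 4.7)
The plan is to substitute the eigen-expansions \eqref{eq:TCKkernelExpansion} and \eqref{eq:TCPkernelExpansion} into each side of the claimed identity and show that both collapse to the same series $\sum_\ell \mu_\ell^2 \|\psi_\ell(x) - \psi_\ell(x')\|^2_\sH$. The decisive point is that \emph{both} families $\{\phi_\ell\}$ and $\{\psi_\ell\}$ are orthonormal in $L^2(X;\sH)$ (the former by \eqref{KK1}; the latter follows directly from $\psi_\ell = \mu_\ell^{-1}\cK\phi_\ell$ and \eqref{KK1}), so the computations for $K$ and for $P$ are structurally identical.

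First I would record the rank-one inner-product identity $\langle a_1 \otimes b_1, a_2 \otimes b_2\rangle_{\cB_2(\sH)} = \langle a_1, a_2\rangle_\sH \,\langle b_2, b_1\rangle_\sH$, which follows from \eqref{eq:HSInnerProductViaONB} and \eqref{eq:RankOneOp} by evaluating on an orthonormal basis of $\sH$. Writing $K(x,y) - K(x',y) = \sum_\ell \mu_\ell (\psi_\ell(x) - \psi_\ell(x')) \otimes \phi_\ell(y)$ and squaring in the $\cB_2(\sH)$-norm produces a double sum over indices $k,\ell$. Integrating in $y$ and using $\int_X \langle \phi_\ell(y), \phi_k(y)\rangle_\sH\,dy = \langle \phi_\ell, \phi_k\rangle_{L^2(X;\sH)} = \delta_{k\ell}$ kills the off-diagonal terms and leaves $\sum_\ell \mu_\ell^2 \|\psi_\ell(x) - \psi_\ell(x')\|^2_\sH$. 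The identical computation for $P$, with $\phi_\ell$ replaced by $\psi_\ell$ in the second tensor slot, yields the same sum by orthonormality of $\{\psi_\ell\}$ in $L^2(X;\sH)$.

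The main obstacle is the rigorous justification of the interchange of the infinite sum with the $y$-integration, since the eigen-expansions converge in $L^2(X\times X;\cB_2(\sH))$ rather than pointwise. I would handle this by truncating: for the partial sums $K_n = \sum_{\ell\le n}\mu_\ell\,\psi_\ell\otimes\phi_\ell$ and $P_n = \sum_{\ell\le n}\mu_\ell\,\psi_\ell\otimes\psi_\ell$ all manipulations are legitimate and produce
\[
\int_X \|K_n(x,y) - K_n(x',y)\|^2_{\cB_2(\sH)}\,dy \,=\, \sum_{\ell=1}^n \mu_\ell^2\|\psi_\ell(x) - \psi_\ell(x')\|^2_\sH \,=\, \int_X \|P_n(x,y) - P_n(x',y)\|^2_{\cB_2(\sH)}\,dy.
\]
Since $K_n \to K$ and $P_n \to P$ in $L^2(X\times X;\cB_2(\sH))$, a Fubini/subsequence argument shows that, for almost every $(x,x') \in X\times X$, the outer integrals on both ends pass to their $n=\infty$ limits, giving the claimed identity a.e.\ on $X \times X$. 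This is the natural sense for the proposition; in the intended application of Theorem~\ref{thm:TraceClassInfDimCpt}, the Hölder hypothesis on $K$ makes the left-hand side continuous in $(x,x')$, so a.e.\ equality upgrades to pointwise equality.
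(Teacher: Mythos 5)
Your proposal is correct and follows essentially the same route as the paper: substitute the eigen-expansions of $K$ and $P$, use the orthonormality of $\{\phi_\ell\}$ and $\{\psi_\ell\}$ in $L^2(X;\sH)$, and observe that both sides collapse to $\sum_\ell \mu_\ell^2\|\psi_\ell(x)-\psi_\ell(x')\|_\sH^2$. Your truncation argument justifying the sum--integral interchange (and the resulting a.e.\ reading, upgraded by continuity in the application) is a welcome refinement of a step the paper carries out only formally.
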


\begin{proof}
We have that
\begin{align}
&\int_X \| K(x,y) - K(x',y)\|^2_{\cB_2(\sH)}\, dy 
 \nonumber \\
\,\,&=\,\,
\int_X \left\| \sum\limits_{\ell=1}^\infty \mu_\ell [ \psi_\ell(x) - \psi_\ell(x')]
\otimes \phi_\ell(y) \right\|^2_{\cB_2(\sH)}dy
\nonumber \\
\,\,&=\,\,
\int_X \sum\limits_j \left\| \sum\limits_{\ell=1}^\infty 
 \mu_\ell [ \psi_\ell(x) - \psi_\ell(x')] \langle \phi_\ell(y),h_j\rangle_\sH \right \|_\sH^2
 dy
 \nonumber \\
\,\,&=\,\,
\sum\limits_{k,\ell=1}^\infty \mu_\ell \mu_k 
\langle \psi_\ell(x) - \psi_\ell(x')\, , \, \psi_k(x) - \psi_k(x') \rangle_\sH
\int_X \sum\limits_j  \langle h_j, \phi_\ell(y) \rangle_\sH 
  \langle \phi_k(y) , h_j \rangle_\sH\, dy
  \nonumber \\
\,\,&=\,\,
\sum\limits_{\ell=1}^\infty \mu_\ell^2 \| \psi_\ell(x) - \psi_\ell(x') \|_\sH^2,
\end{align}
with the same result holding for $P$. 
\end{proof}

\begin{theorem}\label{thm:Weidmann}
 Suppose that $p>1$ and 
$\max(1, p/2) < q \leq p$. Let  
$X \subset \mathbb R^m$ be compact and
$P\in L^2(X\times X;\cB_2(\sH))$
be a kernel such  that
\begin{align}
\int_X \| P(x,y) \,-\, P(x',y)\|^p_{\cB_2(\sH)}\, dy \,\,&\leq\,\, C \|x-x'\|_{\mathbb R^m}^q
\label{eq:Weidmann1}
\\
\int_X \| P(x,y) \,-\, P(x,y')\|^p_{\cB_2(\sH)}\, dx \,\,&\leq\,\, C \|y-y'\|_{\mathbb R^m}^q,
\label{eq:Weidmann2}
\end{align}
for all $x,x',y,y'\in X$. Then there is a kernel
$P_0 \in C^{0}(X\times X;\cB_2(\sH))$
so that $P_0(x,y)=P(x,y)$ for almost all $(x,y)\in X\times X$. 
Furthermore, if $P\in L^\infty(X\times X; \cB_1(\sH))$, then
 $P_0 \in L^\infty(X\times X;  \cB_1(\sH))$.
\end{theorem}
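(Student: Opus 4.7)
The plan is to construct $P_0$ by Friedrichs-type mollification of $P$ over small balls and to show that this mollification is uniformly Cauchy in $C^0(X\times X; \cB_2(\sH))$ using the two hypotheses. For $\epsilon > 0$, I would set
\[
P_\epsilon(x,y) \,:=\, \frac{1}{|B_\epsilon|^2} \iint_{B_\epsilon(x) \times B_\epsilon(y)} P(x', y') \, dx' \, dy',
\]
where $B_\epsilon(x)$ denotes the Euclidean ball of radius $\epsilon$ intersected with $X$ (or $P$ is extended by zero to an open neighborhood of $X$). For each fixed $\epsilon$, continuity of $P_\epsilon$ on $X \times X$ with values in $\cB_2(\sH)$ follows from the strong continuity of translation in $L^1(\mathbb R^{2m}; \cB_2(\sH))$ applied to $P$.

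The heart of the argument is a uniform sup-norm bound on $P_\epsilon - P$. Jensen's inequality applied to the convex function $t \mapsto t^p$ gives
\[
\|P_\epsilon(x,y) - P(x,y)\|_{\cB_2(\sH)}^p \,\leq\, \frac{1}{|B_\epsilon|^2} \iint_{B_\epsilon(x)\times B_\epsilon(y)} \|P(x',y') - P(x,y)\|_{\cB_2(\sH)}^p \, dx'\, dy'.
\]
I would then split $P(x',y') - P(x,y) = [P(x',y') - P(x,y')] + [P(x,y') - P(x,y)]$, absorb the factor $2^{p-1}$ coming from $(a+b)^p \leq 2^{p-1}(a^p + b^p)$, integrate each piece by Fubini in the favorable order, and invoke hypotheses \eqref{eq:Weidmann1} and \eqref{eq:Weidmann2} respectively. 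Each piece produces a bound of the shape $C|B_\epsilon|^{-1} \epsilon^q$, i.e. of order $\epsilon^{q-m}$, so
\[
\|P_\epsilon - P\|_{L^\infty(X\times X;\cB_2(\sH))} \,\leq\, C' \epsilon^{(q-m)/p},
\]
which vanishes as $\epsilon \downarrow 0$ in the regime of interest (notably the application to Theorem~\ref{TraceClassThmRealLine}, where $X \subset \mathbb R$ and $m=1$, so that $q>1$ is exactly what is required). Consequently $\{P_{\epsilon_n}\}$ is uniformly Cauchy in $C^0(X\times X; \cB_2(\sH))$ along any sequence $\epsilon_n \downarrow 0$; its uniform limit $P_0$ is continuous. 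Since Lebesgue differentiation for Bochner integrals ensures $P_\epsilon(x,y) \to P(x,y)$ at every Lebesgue point of $P$, we have $P_0 = P$ almost everywhere. The main obstacle is precisely this bookkeeping step: the two hypotheses control $P$ only in one variable at a time (integrated against the other), so one must arrange the triangle-inequality split and the order of integration so that the $|B_\epsilon|^{-1}$ from the normalization is compensated by the $\epsilon^q$ from \eqref{eq:Weidmann1}--\eqref{eq:Weidmann2}, producing the net exponent $\epsilon^{(q-m)/p}$.

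For the final $L^\infty$--$\cB_1(\sH)$ statement, suppose $\|P(x,y)\|_{\cB_1(\sH)} \leq M$ a.e. Applying \eqref{eq:TriInt} in $\cB_1(\sH)$ immediately yields $\|P_\epsilon(x,y)\|_{\cB_1(\sH)} \leq M$ for every $(x,y) \in X\times X$ and every $\epsilon > 0$. Since $P_\epsilon(x,y) \to P_0(x,y)$ in $\cB_2(\sH)$, and hence in $\cB_\infty(\sH)$, for each fixed $(x,y)$, and since the trace norm is weak-$*$ lower semicontinuous on $\cB_1(\sH) = \cB_\infty(\sH)^*$, I conclude $\|P_0(x,y)\|_{\cB_1(\sH)} \leq M$ for all $(x,y)$, i.e. $P_0 \in L^\infty(X\times X; \cB_1(\sH))$.
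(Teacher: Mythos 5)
Your mollification strategy runs into two concrete problems, the first of which is fatal as written. In the key estimate you split $P(x',y')-P(x,y)=[P(x',y')-P(x,y')]+[P(x,y')-P(x,y)]$ and propose to handle the second bracket with \eqref{eq:Weidmann2}. But \eqref{eq:Weidmann2} controls increments in the second variable only after integration over the \emph{first} variable, and in that bracket the first variable is frozen at the reference point $x$: the $x'$-integration in your double average never touches it, so there is no ``favorable order'' of integration available (swapping the split produces the mirror-image problem with \eqref{eq:Weidmann1}). The hypotheses give no pointwise control of the error $P_\epsilon(x,y)-P(x,y)$ at a fixed $(x,y)$, because each hypothesis pins one variable and integrates the other, while the reference value $P(x,y)$ pins both. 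This is exactly why the paper's proof (following Weidmann) never attempts to show that the averages converge to $P$ uniformly: it instead proves an $r$-independent H\"older estimate for the differences $P_r(x,y)-P_r(x',y)$ and $P_r(x,y)-P_r(x,y')$ --- there both entries are themselves averages, so the inner average over the unshifted variable supplies precisely the integration that \eqref{eq:Weidmann1} or \eqref{eq:Weidmann2} requires --- combines this equicontinuity with a uniform bound obtained from the Hardy--Littlewood maximal function, extracts a uniformly convergent subsequence by Arzel\`a--Ascoli, and only then identifies the continuous limit $P_0$ with $P$ almost everywhere via Lebesgue differentiation.

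Second, even for the bracket where your bookkeeping does work, the rate you obtain is $\epsilon^{(q-m)/p}$, which tends to zero only when $q>m$. The theorem assumes only $\max(1,p/2)<q\le p$, and it is invoked in the proof of Theorem~\ref{thm:TraceClassInfDimCpt} with $p=2$, $q=2\gamma\le 2$ on a compact subset of $\mathbb R^m$ for arbitrary $m$; restricting to the regime $q>m$ (essentially $m=1$, as you concede) therefore proves strictly less than the stated result and would not cover the paper's main theorem, only the application in Theorem~\ref{TraceClassThmRealLine}. By contrast, your treatment of the final $\cB_1$ statement is sound once a continuous $P_0$ arising as a pointwise limit of averages is in hand: the Bochner triangle inequality \eqref{eq:TriInt} gives $\|P_\epsilon(x,y)\|_{\cB_1(\sH)}\le M$, and lower semicontinuity of the trace norm under operator-norm convergence gives $\|P_0(x,y)\|_{\cB_1(\sH)}\le M$ everywhere; this is in fact a little cleaner and sharper than the paper's route through the Hardy--Littlewood maximal theorem and Fatou's lemma applied to singular values, which only yields a bound $C\,\|P\|_{L^\infty(X\times X;\cB_1(\sH))}$. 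But that improvement is moot until the construction of $P_0$ is repaired along the lines of the paper's compactness argument.
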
 

The proof of this result is based on ideas of Weidmann~\cite{weidmann1966integraloperatoren} 
and Brislawn~\cite{brislawn1988kernels}.

\begin{proof}
Let $B_r\subset\mathbb R^m$ be the ball of radius $r>0$ centered at origin
and let $\nu_r$ denote the $m$-dimensional volume of $B_r$.
Since $X$ is compact, $P\in L^1(X\times X;\cB_2(\sH))$ and so the 
local averages,
\begin{equation}\label{eq:LocalAv1}
P_r(x,y) \,\,=\,\, \frac{1}{\nu_r^2} \iint_{B_r\times B_r}
P(x+\xi,y+\eta)\, d\eta d\xi,
\end{equation}
of $P$ are defined and continuous. 
Our goal is to extract a sequence, $P_{r_n}$, that converges to a continuous
kernel, $P_0$, as $n\to\infty$.

Exactly as in the proof for scalar-valued kernels 
given by Weidmann~\cite{weidmann1966integraloperatoren}, we find that
\begin{align}
 \| P_r(x,y) \,-\, P_r(x',y)\|_{\cB_2(\sH)} \,\,&\leq\,\, C \|x-x'\|_{\mathbb R^m}^\beta
 \label{eq:WeidmannProof1}
 \\
 \| P_r(x,y) \,-\, P_r(x,y')\|_{\cB_2(\sH)} \,\,&\leq\,\, C \|y-y'\|_{\mathbb R^m}^\beta,
 \label{eq:WeidmannProof2}
\end{align}
where $\beta= \frac{q-1}p>0$ and 
the constant, $C$, is independent of $r$, $x$, $y$,$x'$ and $y'$.
The main idea in Weidmann's argument is that when
$\|x-x'\|_{\mathbb R^m} \leq r$ we can obtain \eqref{eq:WeidmannProof1}
by  applying H\"older's inequality to  the inner integral in \eqref{eq:LocalAv1}
and invoking \eqref{eq:Weidmann1}. An iterative argument is then used
to treat the case that $\|x-x'\|_{\mathbb R^m} >  r$.

First, the estimates \eqref{eq:WeidmannProof1} and \eqref{eq:WeidmannProof2}
can  be combined to show that $\{P_r\}_{r>0}$ is an equicontinuous
family of functions on $X\times X$ in the $\cB_2(\sH)$-norm.

Next, we introduce  
the Hardy-Littlewood maximal function 
 \begin{equation}
 M_pP(x,y) \,\,:=\,\, \sup\limits_{r>0} \frac{1}{\nu_r^2} \iint_{B_r\times B_r}
 \| P(x+\xi,y+\eta)\|_{\cB_p(\sH)}\, d\eta d\xi,
 \end{equation}
 for $p=1,2$. 
 To obtain a uniform bound on the kernels, $P_r$, we note that $M_2P$
 is finite almost everywhere since $\|P(\cdot,\cdot)\|_{\cB_2(\sH)}$ is integrable~\cite{stein1970singular}. 
Fix $(x_*,y_*)$ so that $M_2P(x_*,y_*) < \infty$.
Then, by \eqref{eq:WeidmannProof1} and \eqref{eq:WeidmannProof2},
\begin{align}
&\| P_r (x,y) \|_{\cB_2(\sH)}  \nonumber \\
&\,\,\leq\,\, \| P_r (x,y) - P_r (x,y_*) \|_{\cB_2(\sH)}  +
\| P_r (x,y_*) - P_r (x_*,y_*) \|_{\cB_2(\sH)} +
\| P_r (x_*,y_*) \|_{\cB_2(\sH)}  \nonumber  \\ 
&\,\,\leq\,\, 2C \operatorname{diam}(X)^\beta + M_2P(x_*,y_*),
\end{align}
and so the family $\{P_r\}_{r>0}$ is uniformly bounded. 
Therefore by the Arzela-Ascoli
Theorem~\cite[Theorem III.3.1]{lang1996real} there is a sequence $r_n\to0$ so that $P_{r_n} \to P_0$ converges
uniformly on $X\times X$ in the $\cB_2(\sH)$-norm. Since the functions $P_r$ are 
continuous in the $\cB_2(\sH)$ norm, $P_0$ is too.\footnote{However  
we cannot conclude that $P_0\in C^0(X\times X; \cB_1(\sH))$.}
 In addition, by the Lebesgue Differentiation Theorem,
$P_{r_n} \to P$ almost everywhere. Consequently, $P=P_0$ almost everywhere.
This completes the proof in the case that $\dim \sH < \infty$.

When $\dim \sH = \infty$ it remains to show that
$P_0 \in L^\infty(X\times X;\cB_1(\sH))$.
Fix $(x_0,y_0)\in X\times X$. Since $P_r\to P_0$ uniformly
on $X\times X$ in the $\cB_\infty(\sH)$ (i.e., operator) norm and
since $P_0$ is uniformly continuous in the $\cB_\infty(\sH)$ norm, for any sequence
$(r_n,x_n,y_n) \to (0,x_0,y_0)$, we have that 
$P_{r_n}(x_n,y_n) \to P_0(x_0,y_0)$ in the $\cB_\infty(\sH)$ norm. 
Therefore, by \cite[Theorem I.4.2]{gohberg1969introduction}, 
\begin{equation}
\mu_k(r_n,x_n,y_n) \to \mu_k(0,x_0,y_0),
\end{equation}
 where $\mu_k(r,x,y)$ denotes the $k$-th singular value of the operator
$ P_r(x,y)$.

Next, suppose that $P\in L^\infty(X\times X; \cB_1(\sH))$. 
By the Hardy-Littlewood Maximal Theorem~\cite{stein1970singular} there is a $C>0$ so that
$\|M_1 P\|_{L^\infty(X\times X;\mathbb R)} 
\leq C \|P\|_{L^\infty(X\times X;\cB_1(\sH))}$.
Hence there is a measure zero set, $Z$, so that 
 \begin{equation}\label{eq:HLMFZ}
 \| P_r(x,y) \|_{\cB_1(\sH)}
\leq  C\|P\|_{L^\infty(X\times X;\mathbb R)} < \infty
\qquad \forall (x,y)\notin  Z \text{ and } \forall r>0.
\end{equation}
By choosing the sequence,  $(r_n,x_n,y_n)\to (0,x_0,y_0)$ so that 
 $(x_n,y_n)\notin Z$, and applying Fatou's Lemma   and \eqref{eq:HLMFZ},
 we conclude that
 \begin{align}
 \| P_0(x_0,y_0) \|_{\cB_1(\sH)} \,\,&=\,\,
 \sum\limits_{k=1}^\infty \mu_k(0,x_0,y_0) 
\\
\,\,&\leq\,\,  \liminf_{n\to\infty} \sum\limits_{k=1}^\infty \mu_k(r_n,x_n,y_n)
\leq C \| P \|_{L^\infty(X\times X;\cB_1(\sH))} < \infty,
 \end{align} 
 as required.
\end{proof}

\begin{proof}[Proof of Theorem~\ref{thm:TraceClassInfDimCpt}]
By \eqref{eq:holdconddef} and Proposition~\ref{prop:intKintP}, the assumptions
\eqref{eq:Weidmann1} and \eqref{eq:Weidmann2} of Theorem~\ref{thm:Weidmann} hold for the kernel $P$ of $\cP = (\cK\cK^*)^{1/2}$
with $p=2$ and $q=2\gamma$.
Therefore $\cP$ has a continuous representative, $P_0\in C^0(X\times X;\cB_2(\sH))$. Furthermore, if $\dim\sH = \infty$, by the assumption in \eqref{eq:TraceClassInfDimCptCond}, Theorem~\ref{thm:Weidmann}, 
and the compactness of $X$, 
 we have that $\restr{P_0}{\Delta}\in L^1(\Delta;\cB_1(\sH))$.
Therefore, by Mercer's Theorem~\ref{thm:MercerA}, 
$\cP$ is trace class and hence so is $\cK$, since they have the same singular values. 
\end{proof}

\section{The case of general matrix-valued kernels on the real line}
\label{sec:TraceClassProofonReals}

In this section, we consider the case of an exponentially decaying 
Lipschitz-continuous matrix-valued kernel on the real line, which is
of interest for applications to the stability of nonlinear waves.
By using a change of variables from a finite interval to the real line, 
we can transform  Theorem~\ref{thm:TraceClassInfDimCpt} to obtain an analogous result on 
$\mathbb R$. For simplicity, we assume that the
kernel is Lipschitz continuous rather than being H\"older continuous.
The reason for making this assumption is that 
Lipschitz continuous functions are  differentiable almost everywhere,
whereas functions that are merely H\"older continuous may not be.
Throughout this section $\|\cdot\|$ denotes a vector or matrix norm.

\begin{theorem}\label{TraceClassThmRealLine}
Let $K\in L^2(\mathbb R\times \mathbb R, \mathbb C^{n\times n})$ be a 
Lipschitz continuous,  matrix-valued kernel such that 
there is an $R>0$ so that for all $|x|$, $|y|>R$
\begin{equation}\label{KandDerivsexp}
     \operatorname{max} \{  \|{K}(x,y)\|, \|\partial_x {K}(x,y)\|, \|\partial_y {K}(x,y)\| \}
     \leq C e^{-\alpha|x-y|},
    \end{equation}
    for some $C, \alpha > 0.$ 
Let $\cK \in \mathcal{B}_2(L^2(\mathbb R,\mathbb{C}^n))$
be the Hilbert-Schmidt operator with kernel $K$. 
Then, $\mathcal{K} \in \mathcal{B}_1(L^2(\mathbb R, \mathbb{C}^n))$ is trace class.
\end{theorem}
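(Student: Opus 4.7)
The plan is to reduce Theorem~\ref{TraceClassThmRealLine} to the compact-domain Theorem~\ref{thm:TraceClassInfDimCpt} via a unitary change of variables from $\mathbb R$ to the open interval $(-1,1)$. Let $\phi:(-1,1)\to\mathbb R$ be a smooth strictly increasing diffeomorphism chosen so that $\phi'$ blows up at the endpoints at a rate compatible with~\eqref{KandDerivsexp}; a concrete choice matched to the rate $\alpha$ is $\phi(s)=(2\alpha)^{-1}\log\!\big((1+s)/(1-s)\big)$. Then the operator $U:L^2(\mathbb R;\mathbb C^n)\to L^2((-1,1);\mathbb C^n)$ given by $(Uf)(s)=\sqrt{\phi'(s)}\,f(\phi(s))$ is unitary, and the conjugate $\widetilde\cK:=U\cK U^*$ has integral kernel
\[
\widetilde K(s,t) \,=\, \sqrt{\phi'(s)\phi'(t)}\, K(\phi(s),\phi(t)).
\]
Since $U$ is unitary, $\cK\in\cB_1(L^2(\mathbb R;\mathbb C^n))$ if and only if $\widetilde\cK\in\cB_1(L^2((-1,1);\mathbb C^n))$, so it suffices to prove the trace class property for $\widetilde\cK$ on the compact interval $[-1,1]$.

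The bulk of the proof is to verify that $\widetilde K$ extends to a Lipschitz continuous (hence H\"older continuous of exponent $\gamma=1>1/2$) matrix-valued kernel on $[-1,1]^2$, at which point Theorem~\ref{thm:TraceClassInfDimCpt} applies (the condition~\eqref{eq:TraceClassInfDimCptCond} being vacuous since $\sH=\mathbb C^n$ is finite dimensional). On the open square, Lipschitz continuity of $\widetilde K$ is immediate from the chain rule, the Lipschitz continuity of $K$, and the smoothness of $\phi,\phi'$. The delicate regime is near the four corners of $[-1,1]^2$, where $\sqrt{\phi'(s)\phi'(t)}\to\infty$. Off the diagonal $s=t$, the exponential factor $e^{-\alpha|\phi(s)-\phi(t)|}$ in~\eqref{KandDerivsexp} dominates the weight, since with the choice above one has $\sqrt{\phi'(s)}\sim (\mathrm{const})\,e^{\alpha|\phi(s)|/2}$ at the endpoints. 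Along and close to the diagonal the exponential factor provides no help; here we exploit that $K\in L^2(\mathbb R\times\mathbb R)$ together with the global Lipschitz continuity forces $\|K(x,x)\|\to 0$ as $|x|\to\infty$, and the derivative bounds in~\eqref{KandDerivsexp} upgrade this to a quantitative rate of decay sufficient to control $\phi'(s)\|K(\phi(s),\phi(s))\|$ at the endpoints. The analogous chain-rule computation for $\partial_s\widetilde K$ and $\partial_t\widetilde K$, again invoking the derivative estimates in~\eqref{KandDerivsexp}, then upgrades these pointwise estimates to uniform Lipschitz continuity of $\widetilde K$ on $[-1,1]^2$.

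The principal obstacle is the quantitative boundary analysis just sketched: one must coordinate the choice of diffeomorphism $\phi$ with the decay rate $\alpha$, and then show that the competition between the exponential off-diagonal decay, the $L^2$-integrability, and the Lipschitz/derivative hypotheses yields enough decay of $K$ on and near the diagonal at infinity to overpower the endpoint growth of $\sqrt{\phi'}$. Once $\widetilde K\in C^{0,1}([-1,1]^2;\mathbb C^{n\times n})\subset C^{0,\gamma}([-1,1]^2;\cB_2(\mathbb C^n))$ for some $\gamma\in(1/2,1]$ is in hand, Theorem~\ref{thm:TraceClassInfDimCpt} immediately gives $\widetilde\cK\in\cB_1(L^2((-1,1);\mathbb C^n))$, and therefore $\cK\in\cB_1(L^2(\mathbb R;\mathbb C^n))$, as required.
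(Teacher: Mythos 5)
Your proposal follows the same route as the paper: compactify $\mathbb R$ to $(-1,1)$ by a logistic change of variables, conjugate $\cK$ by the induced isometry, show the transformed kernel $\widetilde K(s,t)=\sqrt{\phi'(s)\phi'(t)}\,K(\phi(s),\phi(t))$ is Lipschitz on the closed square, and invoke Theorem~\ref{thm:TraceClassInfDimCpt} (with \eqref{eq:TraceClassInfDimCptCond} vacuous since $\dim\sH<\infty$). However, two steps do not hold up. First, the concrete choice $\phi(s)=(2\alpha)^{-1}\log\bigl((1+s)/(1-s)\bigr)$ is too aggressive. Writing $x=\phi(s)$, one has $\sqrt{\phi'(s)}\asymp e^{\alpha|x|/2}$ and, what matters for the Lipschitz bound, $\bigl(\sqrt{\phi'}\bigr)'(s)\asymp e^{3\alpha|x|/2}$, while the only smallness supplied by \eqref{KandDerivsexp} is $e^{-\alpha|x-x'|}\le e^{-\alpha|x|}e^{\alpha|x'|}$. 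Already the weight product $e^{\alpha(|x|+|x'|)/2}$ beats $e^{-\alpha|x-x'|}$ on the whole region $|x-x'|<\tfrac12(|x|+|x'|)$, not just on the diagonal, and the derivative factor $e^{3\alpha|x|/2}$ can never be absorbed. This is exactly why the paper fixes a compactification rate $\delta<\alpha/3$, so that $\sqrt{\phi'}\asymp e^{\delta|x|}$ and $(\sqrt{\phi'})'\asymp e^{3\delta|x|}$ with $3\delta<\alpha$; ``matched to the rate $\alpha$'' is backwards --- the rate must be taken much slower than $\alpha$, and the paper's boundary estimates for $\widetilde K$ and $\partial\widetilde K$ are carried out with the other variable held fixed.

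Second, and more seriously, your treatment of the near-diagonal corners rests on the claim that $K\in L^2$ plus global Lipschitz continuity forces $\|K(x,x)\|\to 0$ and that the derivative bounds in \eqref{KandDerivsexp} upgrade this to a quantitative rate strong enough to control $\phi'(s)\,\|K(\phi(s),\phi(s))\|$. The first half is true but only qualitatively: uniform continuity plus square integrability give $K(x,x)\to0$ with no rate, and \eqref{KandDerivsexp} carries no information along the diagonal beyond boundedness, since $e^{-\alpha|x-y|}$ is of order one there. Because $\phi'$ grows exponentially in $|\phi(s)|$ for any logistic-type compactification, you would need exponential decay of $K$ near the diagonal, and that cannot be extracted from the stated hypotheses. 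Concretely, take $K(x,y)=\psi(x-y)\,g\bigl((x+y)/2\bigr)$ with $\psi$ a fixed smooth even bump of compact support and $g$ smooth, bounded with bounded derivative, $g\in L^2$ but decaying arbitrarily slowly (e.g.\ $g(u)=(1+u^2)^{-1/3}$): all hypotheses of the theorem hold, yet $\phi'(s)K(\phi(s),\phi(s))$ is unbounded for every choice of rate, so no choice of $\phi$ makes $\widetilde K$ even bounded near the corners, and the ``upgrade'' step is a genuine gap rather than a technicality. Any repair must either import additional decay of $K$ along the diagonal (as the kernels in the intended Birman--Schwinger applications have) or handle the diagonal corners by a mechanism different from the off-diagonal exponential factor; as written, your argument does not supply one.
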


\begin{proof}
We  will use a scaling 
 mapping, $\phi:(-1,1)\to\mathbb R$,
 to transform $\mathcal K$ to an operator, $\widetilde{\mathcal K}$,
on $(-1,1)$ that is given by 
$\widetilde{\mathcal K} = \mathcal U {\mathcal K} \mathcal U^{-1}$,
where $\mathcal U: L^2(\mathbb R,\mathbb C^n) \to L^2((-1,1),\mathbb C^n)$ is an isometry defined in terms of $\phi$.
We will show that if the kernel $K$ for ${\mathcal K}$ is Lipschitz-continuous, then so is the transformed kernel $\widetilde{K}$  for 
$\widetilde{\mathcal K}$. Therefore, by Theorem~\ref{thm:TraceClassInfDimCpt}, $\widetilde{\cK}$  is trace class. Consequently, 
${\mathcal K}$ is also trace class since, being related a similarity transform, 
the two operators have the same singular values.

Fix $\delta \in (0, \alpha/3)$, where $\alpha$ is the constant in \eqref{KandDerivsexp}.
We define the scaling transformation $\phi: (-1,1) \to \mathbb{R}$, 
by
\begin{equation}
    x = \phi(y) := \frac{1}{2 \delta} \log \frac{1+y}{1-y}.
\end{equation}
Then
\begin{equation}
    \phi'(y) = \frac{1}{\delta} \frac{1}{1-y^2} > 0,
\end{equation}
and the inverse map, $\phi^{-1} : \mathbb{R} \to (-1,1),$ is given by 
\begin{equation}
    y = \phi^{-1}(x) := \tanh(\delta x) = \frac{e^{\delta x} - e^{- \delta x}}{e^{\delta x} + e^{- \delta x}}.
\end{equation}
The  operator $\mathcal U:L^2(\mathbb{R},\mathbb C^n) \to L^2((-1,1),\mathbb C^n)$  is defined by
\begin{equation}
    (\mathcal U \mathbf f)(y) = (\phi'(y))^{1/2} \mathbf f(\phi(y)), \quad\mathbf f \in L^2(\mathbb{R},\mathbb C^n).
\end{equation}
and $\mathcal U^{-1}: L^2((-1,1),\mathbb C^n) \to L^2(\mathbb{R},\mathbb C^n)$
is given by
\begin{equation}
    (\mathcal U^{-1}\mathbf g)(x) = ((\phi^{-1})'(x))^{1/2} \mathbf g(\phi^{-1}(x)),
    \quad\mathbf g\in L^2((-1,1),\mathbb C^n).
\end{equation}
 These operators are isometries, since by the change of variables theorem,  \begin{equation}
    \|\mathcal U\mathbf f\|_{L^2((-1,1),\mathbb C^n)}^2 = \int_{-1}^1 
    \| \mathbf f(\phi(y))\|^2 \phi'(y) dy = 
    \int_{-\infty}^{\infty} \|\mathbf f(x)\|^2 dx = 
    \| \mathbf f\|_{L^2(\mathbb{R}, \mathbb C^n)}^2,
\end{equation}
and similarly for $\mathcal U^{-1}.$

Next we claim that the kernel for the operator
$\widetilde{\mathcal{K}} = U \mathcal{K} U^{-1}$ on 
$L^2((-1,1),\mathbb C^n)$ 
 is given by 
\begin{equation}\label{transkernel}
    \widetilde{{K}}(y,y') = (\phi'(y))^{1/2} {K}(\phi(y),\phi(y'))(\phi'(y'))^{1/2}, \, y, y' \in (-1,1).
\end{equation}
We verify \eqref{transkernel} by applying 
the chain rule and the change of variables $x' = \phi(y')$, to obtain
\begin{eqnarray}
    (\widetilde{\mathcal{K}}\mathbf g)(y) &=& \nonumber  (\phi'(y))^{1/2} \int_{-\infty}^{\infty} {K}(\phi(y), x')((\phi^{-1})'(x'))^{1/2} \mathbf g(\phi^{-1}(x'))dx'\\
    &=& (\phi'(y))^{1/2} \int_{-1}^1 {K}(\phi(y), \phi(y')) (\phi'(y'))^{1/2}\mathbf g(y') dy'.
\end{eqnarray}
Although $\phi'(y)$ has singularities at $y = \pm 1$,  $\widetilde{{K}}$ is continuous on $[-1, 1] \times [-1, 1],$ since
\begin{equation}\label{transKcont}
    \lim_{y \to \pm 1} \widetilde{{K}}(y,y') = 0 = \lim_{y' \to \pm 1} \widetilde{K}(y,y').
\end{equation}
To prove (\ref{transKcont}), we first observe that since $\alpha > \delta$,
\begin{equation}
    e^{-\alpha|\phi(y)|} = \exp \left( \frac{- \alpha}{2 \delta} \left|\log \frac{1+y}{1-y} \right| \right) = \begin{cases}
        \left(\frac{1-y}{1+y}\right)^{\alpha / (2 \delta)}, \,\, y \in [0,1),\\
        \left(\frac{1+y}{1-y}\right)^{\alpha / (2 \delta)}, \,\, y \in (-1,0].
    \end{cases}
\end{equation}
Let 
\begin{equation}
    F_{\pm}(y) := \frac{(1 \mp y)^{\alpha / 2 \delta}}{(1 \pm y)^{\alpha/ 2 \delta}(1 - y^2)^{1/2}}.
\end{equation}
Then, since $\exp(-\alpha |\phi(y) - \phi(y')|) \leq \exp(-\alpha |\phi(y)|) 
\exp(\alpha |\phi(y')|)$, 
\begin{equation}
    \lim_{y  \to  1} \|(\phi'(y))^{1/2} K(\phi(y),\phi(y'))(\phi'(y'))^{1/2}\| 
    \,\,\leq\,\, \frac{C}{\delta^2}\lim_{y \to 1} F_+(y) F_{-}(y'),
\end{equation}
and since $\delta < \alpha/2$, 
\begin{equation}
    \lim_{y \to 1} F_{+}(y) =  \lim_{y \to 1} \frac{(1-y)^{\alpha/(2 \delta)}}{(1-y^2)^{1/2}(1+y)^{\alpha / (2 \delta)}} = 0,
\end{equation}
and
\begin{equation}
    \lim_{y \to -1} F_{-}(y) =  \lim_{y \to -1} \frac{(1+y)^{\alpha/(2 \delta)}}{(1-y^2)^{1/2}(1-y)^{\alpha / (2 \delta)}} = 0.
\end{equation}
Therefore, for $y = \pm 1$ and $y' \in [-1, 1],$ 
\begin{equation}\label{translimK}
     \lim_{y  \to  \pm 1} \|(\phi'(y))^{1/2} {K}(\phi(y),\phi(y'))(\phi'(y'))^{1/2}\| = 0,
\end{equation}
and the same will hold for $y' = \pm 1,$ when $y \in [-1, 1].$
A similar argument holds for $\nabla \widetilde{{K}},$
since
\begin{eqnarray}
\nonumber    \partial_y \widetilde{{K}}(y,y') \,\,&=\,\, (\phi'(y))^{1/2} \partial_y {K}(\phi(y),\phi(y'))(\phi'(y'))^{1/2}
\\ \label{transdyK} 
& \,\,+\,\, [(\phi'(y))^{1/2}]' {K}(\phi(y),\phi(y'))(\phi'(y'))^{1/2}.
\end{eqnarray}
Since, by assumption, both $\partial_x {K}$ and $\partial_y {K}$ have the same exponential decay as ${K},$ the first term in \eqref{transdyK} converges to $0$ as $y \rightarrow \pm 1$ or $y' \rightarrow \pm 1$ just as in \eqref{translimK} above. For the second term, we  observe that since 
$\delta <  \alpha/3$,
\begin{eqnarray}
    \lim\limits_{y \to 1} \|[(\phi'(y))^{1/2}]' {K}(\phi(y),\phi(y'))\| &\leq \frac{C}{\sqrt{\delta}}\lim\limits_{y \to 1} \frac{y}{(1-y^2)^{3/2}} \frac{(1-y)^{\alpha / (2 \delta)}}{(1+y)^{\alpha / (2 \delta)})}
    \nonumber
    \\
    &= \frac{C}{\sqrt{\delta}} \lim\limits_{y \to 1} \frac{y(1-y)^{\frac{1}{2}(\frac{\alpha}{\delta}-3)}}{(1+y)^{\frac{1}{2}(\frac{\alpha}{\delta}+3)}} = 0.
\end{eqnarray}
Therefore, $\widetilde{K}$ is differentiable almost everywhere with bounded derivative, and hence is Lipschitz, as required.
\end{proof}

\section{Appendix}\label{sec:Appendix}

Here we outline an alternate proof of Theorem~\ref{thm:TraceClassInfDimCpt}, which is valid
in the case that $\dim \sH < \infty$.

\begin{proof}[Second Proof of  Theorem~\ref{thm:TraceClassInfDimCpt}]
The main idea for this proof is contained in Fredholm's 1903 paper~\cite{Fred1903}. Of course at that point,  Fredholm did not quite have the concept of a trace class operator. Rather he proved that if a scalar-valued 
kernel, $K: [a,b]\times[a,b]\to\mathbb C$, for an integral operator, $\mathcal K$, 
is H\"older continuous with H\"older exponent, $\gamma\in (\tfrac 12,1]$, and if 
\begin{equation}
b_n(\mathcal{K}) \,\,:=\,\, \frac{1}{n!} \int\limits_{[a,b]^n} \operatorname{det}[ K(x_\alpha, x_\beta)]_{\alpha,\beta=1}^n\, dx_1 \cdots dx_n,
\end{equation}
then the infinite series,
\begin{equation}
D_{\mathcal K}(z) \,\,:=\,\, \sum\limits_{n=0}^\infty b_n(\mathcal{K}) z^n,
\label{eq:FredholmsDefn}
\end{equation}
converges uniformly and absolutely to an entire function of the complex parameter, $z$. Fredholm then uses \eqref{eq:FredholmsDefn}
as the definition of his determinant. In modern parlance,
we note that if the operator is already known to be trace class, then the regular Fredholm determinant of $\mathcal K$
is given by
$\det(\mathcal I + z \mathcal K) \,\,=\,\, D_{\mathcal K}(z)$.
To prove that the series \eqref{eq:FredholmsDefn} converges
Fredholm used an ingenious combination of estimates to show that 
there is a constant, $C_1$ so that
\begin{equation}
|b_n(\mathcal{K})| \,\, \leq \,\, \frac{C_1^n}{n!} \,n^{- \gamma + 1/2}.
\label{eq:bnestimate}
\end{equation}
We note that this estimate only holds for operators defined on a 
finite interval, not on the entire real line.

Gohberg, Goldberg, and Krupnik~\cite{GGK} use this estimate to
prove that if the operator $\mathcal K$ is Hermitian symmetric and 
if the H\"older exponent satisfies $\gamma > 1/2$, then $\mathcal K$
is trace class. Their proof is based on two main ideas. The first idea
is to show that 
$D_{\mathcal K}(z)$ is the limit in an appropriate sense of a 
sequence of finite dimensional determinants, $\operatorname{det}(I + z K_m)$, for $m\in \mathbb N$.
Consequently, the set of eigenvalues, $\{\lambda_j\}$, of the
Hermitian symmetric operator $\mathcal K$ coincides with the
set $\{-1/z_j\}$, where $\{z_j\}$ is the set of  zeros of the entire function $D_{\mathcal K}(z)$. 
The second idea is to use a result from the theory of the distribution of the zeros of entire functions~\cite{Levin} to show that the series 
\begin{equation}
\sum\limits_j |\lambda_j| \,\,=\,\,\sum\limits_j \frac{1}{|z_j|}
\label{eq:TClambdaseries}
\end{equation}
converges if  the order of growth, 
\begin{equation}
\rho_D \,\,:=\,\, \overline{\lim\limits_{n\to\infty}} \,\frac{n \log n}{\log \frac{1}{|b_n|}},  
\end{equation}
of the entire function $D_{\mathcal K}(z)$
satisfies $\rho_D < 1$. By \eqref{eq:bnestimate}, this inequality 
holds provided that $\gamma > 1/2$.
Finally we observe that if $\mathcal K$ is Hermitian then $|\lambda_j| = \mu_j$,
and so  the convergence of~\eqref{eq:TClambdaseries} implies that $\mathcal K$ is trace class. 

To extend this proof to matrix-valued kernels without making the additional assumption that the operator is Hermitian symmetric, we first note that if 
Theorem~\ref{thm:TraceClassInfDimCpt} holds for Hermitian operators, then it holds for operators that are not assumed to have any symmetry properties. 
To see this, let
\begin{equation}
    H \,\,:=\,\, \tfrac12 (K + K^*)
    \qquad\text{and}\qquad
      S \,\,:=\,\, \tfrac12 (K - K^*)
\end{equation}
be the Hermitian and skew-Hermitian parts of $K$. 
Since the kernel $\widetilde{S} = i S$ is Hermitian,
we see that $K = H - i \widetilde{S}$ is a linear combination
of Hermitian kernels, each of which is H\"older continuous.
Since the space of trace class operators is a vector space, we conclude that if the result is true for Hermitian operators, then it is true in general.

One of the challenges in the approach of Gohberg, Goldberg, and Krupnik
is that they had to  develop a theory of determinants of compact operators that is parallel to but distinct from the theory of regular and 2-modified Fredholm determinants of trace class and Hilbert-Schmidt operators. They use their theory  to 
show that $D_{\mathcal K}(z)$ is the limit of a 
sequence of finite dimensional determinants and that $\lambda_j = -1/z_j$.

Rather than relying on this theory, since we already know that $\mathcal K$ is
Hilbert-Schmidt, we can replace $D_{\mathcal K}(z)$ in \eqref{eq:FredholmsDefn}
with the 2-modified Fredholm determinant, which in the case of a matrix-valued kernel, is the entire function 
\begin{equation}
{\det}_2 (\mathcal I + z \mathcal K) \,\,:=\,\,
\sum\limits_{n=0}^\infty \sum\limits_{\mathbf j \in J^{(n)}_k} b_{n,\mathbf j}(\mathcal K) \, z^n,
\end{equation}
where 
$ J^{(n)}_k = \{ (j_1,\dots,j_n\, : \, 1 \leq j_\alpha \leq k, \forall \alpha \}$
is a multi-index set of cardinality $| J^{(n)}_k|=k^n$, and  
\begin{equation}\label{eq:TCbnj}
b_{n,\mathbf j}(\mathcal K) \,\,=\,\,
\frac{1}{n!} \int\limits_{[a,b]^n}
\det\left[ K_{j_\alpha j_\beta}(x_{\alpha},x_{\beta})(1-\delta_{\alpha\beta})\right]_{\alpha,\beta=1}^n
, dx_1\cdots dx_n.
\end{equation}
Using \eqref{eq:TCbnj} and much careful bookkeeping, it is possible to derive a version of Fredholm's estimate \eqref{eq:bnestimate}. Finally, we use \eqref{eq:TCdet2evalue} to complete the proof. 
\end{proof}


\section*{Acknowledgments} 
Y.L.  thanks the Courant Institute of Mathematical Sciences at NYU and 
especially Prof. Lai-Sang Young for their hospitality.  
The authors thank F. Gesztesy for pointing out relevant literature.

\bibliographystyle{elsarticle-num}
\bibliography{TraceClass}

\begin{thebibliography}{10}
\expandafter\ifx\csname url\endcsname\relax
  \def\url#1{\texttt{#1}}\fi
\expandafter\ifx\csname urlprefix\endcsname\relax\def\urlprefix{URL }\fi
\expandafter\ifx\csname href\endcsname\relax
  \def\href#1#2{#2} \def\path#1{#1}\fi

\bibitem{Simon}
B.~Simon, Trace Ideals and Their Applications, 2nd Edition, American
  Mathematical Society, Providence, RI, 2005.

\bibitem{GGK}
I.~Gohberg, S.~Goldberg, N.~Krupnik, Traces and Determinants of Linear
  Operators, 1st Edition, Birkh\"auser Basel, Basel, Switzerland, 2000.

\bibitem{Bornemann}
F.~Bornemann, On the numerical evaluation of {F}redholm determinants,
  Mathematics of Computation 79~(270) (2009) 871--915.

\bibitem{schatten1946crossII}
R.~Schatten, J.~von Neumann, The cross-space of linear transformations. {II},
  Annals of Mathematics (1946) 608--630.

\bibitem{grothendieck1956theorie}
A.~Grothendieck, La th{\'e}orie de {F}redholm, Bulletin de la Soci{\'e}t{\'e}
  Math{\'e}matique de France 84 (1956) 319--384.

\bibitem{Fred1903}
I.~Fredholm, {Sur une classe d'\'equations fonctionnelles}, Acta Mathematica
  27~(none) (1903) 365 -- 390.

\bibitem{PhysicaD116p95}
T.~Kapitula, Stability criterion for bright solitary waves of the perturbed
  cubic-quintic {S}chr\"odinger equations, Physica D 116 (1998) 95--120.

\bibitem{Kap}
T.~Kapitula, K.~Promislow, Spectral and Dynamical Stability of Nonlinear Waves,
  Springer, New York, NY, 2013.

\bibitem{shen2016spectra}
Y.~Shen, J.~Zweck, S.~Wang, C.~Menyuk, Spectra of short pulse solutions of the
  cubic-quintic complex {G}inzburg-{L}andau equation near zero dispersion,
  Stud. Appl. Math. 137~(2) (2016) 238--255.

\bibitem{EssSpec}
J.~Zweck, Y.~Latushkin, J.~Marzuola, C.~Jones, The essential spectrum of
  periodically stationary solutions of the complex {G}inzburg-{L}andau
  equation, J. Evol. Equ. 21 (2021) 3313--3329.

\bibitem{EJF}
F.~Gesztesy, Y.~Latushkin, K.~Makarov, Evans functions, {J}ost functions, and
  {F}redholm determinants, Archive for Rational Mechanics and Analysis 186~(3)
  (2007) 361--421.

\bibitem{carey2014jost}
A.~Carey, F.~Gesztesy, D.~Potapov, F.~Sukochev, Y.~Tomilov, A
  {J}ost-{P}ais-type reduction of {F}redholm determinants and some
  applications, Integral Equations and Operator Theory 79 (2014) 389--447.

\bibitem{aronszajn1950theory}
N.~Aronszajn, Theory of reproducing kernels, Trans. Amer. Math. Soc. 68~(3)
  (1950) 337--404.

\bibitem{Smale}
F.~Cucker, S.~Smale, On the mathematical foundations of learning, Bull. Amer.
  Math. Soc. 39 (2002) 1--49.

\bibitem{lax2002functional}
P.~D. Lax, Functional {A}nalysis, Vol.~55, John Wiley \& Sons, 2002.

\bibitem{mercer1909}
J.~Mercer, Functions of positive and negative type, and their connection the
  theory of integral equations, Philos. Trans. Roy. Soc. A 209~(441-458) (1909)
  415--446.

\bibitem{carleman1918uber}
T.~Carleman, {\"U}ber die {F}ourierkoeffizienten einer stetigen {F}unktion,
  Acta Math. 41 (1918) 377--384.

\bibitem{bernstein1934convergence}
S.~Bernstein, Sur la convergence absolue des s\'eries trigonom\'etriques, C. R.
  Acad. Sci. Paris 199 (1934) 397--400.

\bibitem{weidmann1966integraloperatoren}
J.~Weidmann, Integraloperatoren der spurklasse, Mathematische Annalen 163~(4)
  (1966) 340--345.

\bibitem{smithies1958}
F.~Smithies, Integral Equations, 1st Edition, Cambridge University Press, 1958.

\bibitem{hardy1928convergence}
G.~H. Hardy, J.~E. Littlewood, A convergence criterion for {F}ourier series,
  Mathematische Zeitschrift 28~(1) (1928) 612--634.

\bibitem{brislawn1988kernels}
C.~Brislawn, Kernels of trace class operators, Proc. Amer. Math. Soc. 104~(4)
  (1988) 1181--1190.

\bibitem{brislawn1991traceable}
C.~Brislawn, Traceable integral kernels on countably generated measure spaces,
  Pacific Journal of Mathematics 150~(2) (1991) 229--240.

\bibitem{delgado2021schatten}
J.~Delgado, M.~Ruzhansky, Schatten-von {N}eumann classes of integral operators,
  Journal de Math{\'e}matiques Pures et Appliqu{\'e}es 154 (2021) 1--29.

\bibitem{de2013extension}
E.~De~Vito, V.~Umanit{\`a}, S.~Villa, An extension of mercer theorem to
  matrix-valued measurable kernels, Applied and Computational Harmonic Analysis
  34~(3) (2013) 339--351.

\bibitem{carmeli2006vector}
C.~Carmeli, E.~De~Vito, A.~Toigo, Vector valued reproducing kernel hilbert
  spaces of integrable functions and mercer theorem, Analysis and Applications
  4~(04) (2006) 377--408.

\bibitem{minh2016operator}
H.~Q. Minh, Operator-valued bochner theorem, fourier feature maps for
  operator-valued kernels, and vector-valued learning, arXiv preprint
  arXiv:1608.05639 (2016).

\bibitem{reisert2007learning}
M.~Reisert, H.~Burkhsrdt, Learning equivariant functions with matrix valued
  kernels., Journal of Machine Learning Research 8~(3) (2007).

\bibitem{Arendt2011}
W.~Arendt, C.~J.~K. Batty, M.~Hieber, F.~Neubrander, The Laplace Integral,
  Springer Basel, Basel, 2011, pp. 5--62.

\bibitem{mikusinski1978bochner}
J.~Mikusi{\'n}ski, The {B}ochner integral, Springer, 1978.

\bibitem{BirmanSolomjak1987}
M.~Birman, M.~Solomjak, Spectral theory of self-adjoint operators in {H}ilbert
  space, Kluwer, 1987.

\bibitem{lidskii1959non}
V.~Lidskii, Non-selfadjoint operators with a trace, Dokl. Akad. Nauk SSSR
  125~(3) (1959) 485--487.

\bibitem{cochran1972analysis}
J.~Cochran, The Analysis of Linear Integral Equations, McGraw-Hill, New York,
  NY, 1972.

\bibitem{Burrus}
C.~Burrus, R.~Gopinath, H.~Guo, Introduction to Wavelets and Wavelet
  Transforms: A Primer, Pearson, 1997.

\bibitem{gohberg1969introduction}
I.~Gohberg, M.~G. Kre{\u\i}n, Introduction to the theory of linear
  nonselfadjoint operators, American Mathematical Soc., 1969.

\bibitem{Meyer}
C.~Meyer, Matrix Analysis and Applied Linear Algebra, SIAM, Philadelphia, PA,
  2001.

\bibitem{rudin1976}
W.~Rudin, Principles of {M}athematical {A}nalysis, 3rd Edition, McGraw-Hill,
  1976.

\bibitem{stein1970singular}
E.~M. Stein, Singular integrals and differentiability properties of functions,
  Princeton {U}niversity {P}ress, 1970.

\bibitem{lang1996real}
S.~Lang, Real and functional analysis, Vol. 142, Springer, 1996.

\bibitem{Levin}
B.~Levin, Distribution of Zeros of Entire Functions, American Mathematical
  Society, Providence, RI, 1980.

\end{thebibliography}

\end{document}